\theoremstyle{plain}
\renewcommand{\theequation}{\arabic{section}.\arabic{equation}}
\renewcommand\thefigure{\thesection.\@arabic\c@figure}
\renewcommand\thetable{\thesection.\@arabic\c@table}
\newtheorem{thm}{\bf Theorem}[section]
\newtheorem{cor}{\bf Corollary}[section]
\newtheorem{prop}{Proposition}[section]
\newtheorem{lmm}{\bf Lemma}[section]
\newenvironment{lemma}{\begin{lmm}}{\end{lmm}}
\theoremstyle{remark}
\newtheorem{remark}{\bf Remark}[section]
\theoremstyle{definition}
\newtheorem{defn}{Definition}[section]
\newtheorem{exm}{\bf Example} %%%xinjia
\newenvironment{example}{\begin{exm}} {\end{exm}} %%%xinjia
\def \ri {{\rm i}}
\newcommand{\bs}[1]{\boldsymbol{#1}}
\newcommand \dint {\displaystyle \int}
\newcommand \dsum {\displaystyle\sum}
\newcommand \R {\mathbb{R}}
\def \cb {\color{blue}}
\begin{document}

\title[Integral fractional Laplacian] {Fast Fourier-like  Mapped Chebyshev Spectral-Galerkin  Methods for PDEs with  Integral
Fractional Laplacian in  Unbounded Domains}
\author[C. Sheng, \, J. Shen, \, T. Tang, \,  L. Wang \,  \& \,  H. Yuan]{ Changtao Sheng${}^{1}$, \; Jie Shen${}^{2},$ \; Tao Tang${}^{3},$ \; Li-Lian Wang${}^{1}$ \; and \; Huifang Yuan${}^{4}$}

\thanks{${}^{1}$Division of Mathematical Sciences, School of Physical
		and Mathematical Sciences, Nanyang Technological University,
		637371, Singapore. The research of the  authors is partially supported by Singapore MOE AcRF Tier 2 Grants:  MOE2018-T2-1-059 and MOE2017-T2-2-144. Emails: ctsheng@ntu.edu.sg (C. Sheng) and  lilian@ntu.edu.sg (L. Wang).\\
	\indent ${}^{2}$Department of Mathematics, Purdue University, West Lafayette, IN 47907-1957, USA. The work of of the author is partially supported by NSF DMS-1620262, DMS-1720442 and AFOSR FA9550-16-1-0102.  Email: shen7@purdue.edu (J. Shen).\\
	\indent  ${}^{3}$Division of Science and Technology, BNU-HKBU United International College, Zhuhai, Guangdong, China,  and SUSTech  International Center for Mathematics,  Southern University of Science and Technology, Shenzhen, China. The work of this author is partially supported by
	the NSF of China (under the Grant No. 11731006) and the Science Challenge Project
(No. TZ2018001).  Email: tangt@sustech.edu.cn (T. Tang).\\
	\indent ${}^{4}$Department of Mathematics, Southern University of Science and Technology, Shenzhen, China.
%	The work of the author is partially supported by the National Natural Science Foundations of China under grant 91630312.
	Email:  %tangt@sustech.edu.cn (T. Tang) and
yuanhf@sustech.edu.cn (H. Yuan).\\
\indent The last two authors would like to thank NTU and SUSTech  International Center for Mathematics for hosting their mutual visits devoted to this collaborative work.
         }

%\begin{keywords} Fractional Laplacian,   Gegenbauer polynomials, modified rational functions, unbounded domains, Fourier transforms, spectral methods.
%\end{keywords}

%\begin{AMS}
% 65N35, 65M70, 41A05, 41A25.
%\end{AMS}
\subjclass[2000]{65N35, 65M70, 41A05, 41A25.}	
\keywords{Integral fractional Laplacian,   Dunford-Taylor formula, Mapped Chebyshev functions, bi-orthogonal,  nonlocal/singular operators.}

 \begin{abstract}  In this paper, we propose a fast  spectral-Galerkin method for solving PDEs involving integral fractional Laplacian in $\mathbb{R}^d$, which is  built upon two essential components: (i)   the Dunford-Taylor formulation of the fractional Laplacian; and (ii)  Fourier-like bi-orthogonal mapped Chebyshev functions (MCFs) as basis functions.  As a result,
 the fractional Laplacian can be fully diagonalised, and the complexity of solving  an elliptic fractional PDE is quasi-optimal, i.e., $O((N\log_2N)^d)$ with $N$ being the number of modes in each spatial direction.   Ample numerical tests for various decaying exact solutions show that the convergence of the fast solver perfectly matches the order of theoretical error estimates.  With a suitable time-discretisation, the fast solver can be directly applied to a large class of nonlinear  fractional PDEs. As an example, we solve  the fractional nonlinear Schr{\"o}dinger equation by using  the fourth-order time-splitting method together with the proposed MCF-spectral-Galerkin method.
% basis functions, is constructed for MCFs-Galerkin method for fractional Laplacian in unbounded domains $\mathbb{R}^d$, and a fourth-order time-splitting MCFs pseudospectral method is proposed for fractional nonlinear Schr{\"o}dinger equation. Numerical experiments on known exact solutions validate the effectiveness of the proposed method. Computational examples include exponentially and algebraically decaying source functions with varying regularity.
\end{abstract}

%\begin{AMS} 65N35, 65M70, 41A05, 41A25.
%\end{AMS}

\maketitle
%\begin{keywords} Fractional Laplacian,   Gegenbauer polynomials, modified rational functions, unbounded domains, Fourier transforms, spectral methods.
%\end{keywords}

\vspace*{-10pt}
\section{Introduction}  Diffusion is the movement of a substance from an area of high concentration to an area of low concentration, which is a  ubiquitous physical process   in nature.
The normal diffusion models rooted in Brownian motion have been well-studied in years.
However,  numerous experimental and scientific evidences have shown that many  phenomena and complex systems involve anomalous diffusion, where the underlying stochastic processes are non-Brownian \cite{montroll1965random,metzler2000random,metzler2004restaurant}.  Notably,    the  fractional   models  have emerged as a powerful tool in modelling  anomalous diffusion  in  diverse fields (see, e.g., \cite{shlesinger1987levy,hatano1998dispersive,mccay1981theory,cushman1993nonlocal,benson2000application,brockmann2006scaling,sims2008scaling,carmichael2015fractional} and the references therein)  over the past two decades.  The   nonlocal  operators typically involved therein %in the underlying  mathematical models
 include   the Riemann-Liouville, Caputo and Riesz fractional integrals/derivatives, or the fractional Laplacian.   They share some common and interwoven difficulties,  e.g.,  the nonlocal and  singular behaviours, so they are much more challenging and difficult to deal with than the usual local operators.  The recent  works \cite{lischke2018fractional,Bonito2018Numer} provide an up-to-date review in particular for numerical issues with several versions of fractional Laplacian.
The interested readers may also  refer to  \cite{silling2000reformulation,dubook} for  nonlocal modelling in many other applications.
% In particular, the fractional Laplacian has been used to improve the modelling accuracy of various anomalous diffusion and non-exponential relaxation patterns etc, see, e.g., \cite{benson2000application,montroll1965random,shlesinger1987levy} and the references therein

A large volume of  literature is available for numerical solutions of  one-dimensional spatial and temporal fractional differential equations, which
particularly include the finite difference methods/finite element methods (see, e.g., \cite{deng2008finite,duo2015computing,guo2018high,jin2013error,jin2018numerical,zhang2018riesz} and many references therein), and spectral methods (see, e.g.,  \cite{chen2018jacobi,hou2017fractional,mao2016efficient}).
In this work, we are mainly interested in the integral fractional Laplaican in multiple dimensions, which is deemed as one of the most challenging nonlocal operators for both computation and analysis.
It is known that  for $s\in (0,1),$ the fractional Laplacian of $u\in  \mathscr{S}(\mathbb{R}^d)$
(the functions of  the Schwartz class) is defined  by the Fourier transform:
\begin{equation}\label{Ftransform}
\begin{split}
%&{\mathscr F}[(-\Delta)^s v](\xi)=|\xi|^{2s}  {\mathscr F}[v](\xi),\qquad
 (-\Delta)^s u(x):={\mathscr F}^{-1}\big[|\xi|^{2s}  {\mathscr F}[u](\xi)\big](x), \quad  \forall\, x \in {\mathbb R}^d.
\end{split}
\end{equation}
Equivalently, it can  be defined by the point-wise formula (cf. \cite[Prop. 3.3]{Nezza2012BSM}):
%for any $u\in  \mathscr{S}(\mathbb{R}^d)$ and  $s\in (0,1),$
\begin{equation}\label{fracLap-defn}
(-\Delta)^s u(x)=C_{d,s}\, {\rm p.v.}\! \int_{\mathbb R^d} \frac{u(x)-u(y)}{|x-y|^{d+2s}}\, {\rm d}y,\quad x\in {\mathbb R}^d,
\end{equation}
where ``p.v." stands for the principle value and the normalisation constant
\begin{equation}\label{Cds}
C_{d,s}:=\Big(\int_{\mathbb R^d} \frac{1-\cos \xi_1}{|\xi|^{d+2s}}\,{\rm d}\xi\Big)^{-1}=
\frac{2^{2s}s\Gamma(s+d/2)}{\pi^{d/2}\Gamma(1-s)}.
\end{equation}
As a result, to evaluate fractional diffusion of $u$ at a spatial point, information involving all spatial points is needed. If $u$ is defined on a bounded domain $\Omega$, we first extend it to zero
outside $\Omega,$ and then use the above definition.

As many physically motivated fractional diffusion models  are naturally set in unbounded domains,
the development of effective  solution methods has attracted much recent attention.  In general,   the existing  approaches can be  classified into the following  two categories.
\begin{itemize}
\item This first is to approximate the solution by the orthogonal basis functions, and fully use the analytic properties of fractional Laplacian performing  on the basis
% including Hermite,  Lagurerre  and mapped Jacobi functions
 (see, e.g.,   \cite{chen2018laguerre,mao2017hermite,tang2018hermite,tang2019rational}).
Based on some analytic fractional calculus formulas of  generalised Laguerre functions,
  Chen et al. \cite{chen2018laguerre} developed  an efficient spectral method for one-dimensional  fractional Laplacian on the whole line.  Using the  property that the Hermite functions are invariant  under the Fourier transform,   Mao and Shen  \cite{mao2017hermite}  proposed the  Hermite spectral-Galerkin method in the transformed domain based on  the Fourier definition \eqref{Ftransform}.
  Tang et al. \cite{tang2018hermite}  explicitly evaluated the Hermite fractional differentiation matrices  and implemented the spectral-collocation methods based on some elegant analytic tools.   The idea was extended to the rational approximation in  \cite{tang2019rational}.
  It is noteworthy that due to  the singular and non-separable factor  $|\xi|^{2s}$ in \eqref{Ftransform},   these methods become complicated even for $d=2$, and  computationally prohibitive for $d\ge 3.$
  \smallskip
  \item The second is to use suitable   equivalent formulations of the fractional Laplacian to
alleviate   its notorious numerical difficulties.
  In   Caffarelli and Silvestre \cite{caffarelli2007extension}, the $d$-dimensional fractional Laplacian is extended to  a $d+1$ dimensional  elliptic operator with degenerating/singular coefficients in the additional dimension.   This groundbreaking extension, together with the follow-up works for the fractional Laplacian in bounded domains, provides a viable alternative for its mathematical and numerical treatment (see, e.g., \cite{nochetto2014pde,nochetto2016pde,ainsworth2018hybrid} for finite element methods).
     On the other hand,
  the variational form corresponding to the fractional Laplacian can be formulated  as the Dunford-Taylor formula  (cf. \cite[Thm. 4.1]{bonito2019numerical}): for any $u,v\in H^{s}(\mathbb{R}^d)$ with $s\in (0,1)$,
\begin{equation}\label{DTfor0}
\left((-\Delta)^{\frac{s}2} u,(-\Delta)^{\frac{s}2} v\right)_{L^2(\mathbb{R}^d)}=C_s \int_0^\infty t^{1-2s} \int_{\mathbb R^d}
 \big((-\Delta)(\mathbb I-t^2 \Delta)^{-1} u \big)(x)\,  v(x)\, {\rm d}x\,{\rm d}t,
\end{equation}
where $\mathbb I$ is the identity operator and $C_s={2 \sin (\pi s)}/{\pi}.$  In particular, for fractional Laplacian in a bounded domain  $\Omega\subseteq \mathbb R^d,$   we have
\begin{equation}\label{DTfor0b}
\left((-\Delta)^{\frac{s}2} \tilde u,(-\Delta)^{\frac{s}2} \tilde v\right)_{L^2(\Omega)}=C_s \int_0^\infty t^{1-2s} \int_{\Omega}
 \big((-\Delta)(\mathbb I-t^2 \Delta)^{-1} \tilde  u \big)(x)\,  v(x)\, {\rm d}x\,{\rm d}t,
\end{equation}
where $\tilde u$ denotes the zero extension of $u.$ Very recently,    the  finite element method  with sinc quadrature (in  $t$),    was implemented and analysed   in \cite{bonito2019numerical,bonito2019sinc} based on \eqref{DTfor0b}. For each quadrature node $t_j,$  one  solves the elliptic problem:
\begin{equation}\label{elliptA}
-t^2_j\,\Delta w_j+w_j=\tilde u\quad {\rm in}\;\; {\mathbb R}^d,\;\; {\rm i.e.,}\;\;  w_j= (\mathbb I-t_j^2 \Delta)^{-1} \tilde u(x),
\end{equation}
where the unbounded domain has to be truncated, and the side of the domain depends on $t_j$.  In fact,  many sinc quadrature points should be used to resolve the singularity near  $t=0,$   but the problem \eqref{elliptA}  becomes  stiff and sharp boundary layers at $\partial \Omega$ can occur.

%%Set $w=w(\tilde u,t):= (\mathbb I-t^2 \Delta)^{-1} \tilde u(x)$, then we need to solve
%
%The main difficulties/drawbacks for the above two formulations are: (i). Singular coefficients in the additional dimension of Caffarelli-Silvestre extension will significantly increase the amount of calculation and will lead to the low order convergence. (ii). The integration in $t$-direction needs a lot of quadrature points to ensure the accuracy for Dunford-Taylor formulation of fractional Laplacian in the bounded domain, due to the boundary layer will appear in this situation.

\end{itemize}

\smallskip
We also remark that  direct discretization of  the integral fractional Laplacian on bounded domains  based on the definition \eqref{fracLap-defn},  was
discussed in some recent works (see, e.g., \cite{huang2014numerical,duo2018finite} for finite difference methods; and   \cite{acosta2017fractional,acosta2017short,ainsworth2017aspects,ainsworth2018hybrid,deng2018time} for finite element methods).

%Design an efficient numerical method for integral fractional Laplacian on bounded domains is much complicated and challenging. Let's take Dunford-Taylor formulation as example (see \eqref{DTfor0}). We first extend the solution $u$ in bounded domain $\Omega\subseteq\mathbb{R}^d$ to $\tilde u$ in $\mathbb{R}^d$. Set $w=w(\tilde u,t):= (\mathbb I-t^2 \Delta)^{-1} \tilde u(x)$, then we need to solve % the following Elliptic equation
%\begin{equation*}
%-t^2\Delta w+w=\tilde u\quad {\rm in}\;\; {\mathbb R}^d,
%\end{equation*}
%which implies we need to solve one elliptic equation for each $t$ and $\tilde u$. This process is time-consuming due to the number of quadrature points in $t$-direction is significant. What is more, the solution $w$ will have a very sharp boundary layer as $t\rightarrow0$.
%Hence, the high order and efficient numerical scheme for this problem becomes very difficult.

In this paper, we develop  a fast spectral-Galerkin method for PDEs involving integral fractional Laplacian in  $\mathbb R^d$.  Consider, for example,  the model equation:
\begin{equation}\label{dDsLap00}
(-\Delta)^s u(x)+\gamma u(x)=f(x) \;\;\;  {\rm in}\;\;  \mathbb{R}^d; \quad
  u(x)=0 \;\;\;  {\rm as}\;\;  |x|\to \infty,
\end{equation}
where $s\in (0,1)$ and $\gamma>0.$ The efficient spectral algorithm is built upon two essential components{\cb :} (i) the
Dunford-Taylor formulation \eqref{DTfor0}  for the fractional Laplacian; and (ii) the approximation of the solution by the tensorial Fourier-like bi-orthogonal mapped Chebyshev functions.  As a result,
the complexity of solving
\eqref{dDsLap00} is $O((N\log_2N)^d),$ where $N$ is the degree of freedom along each spatial dimension.   The integration in $t$ (in \eqref{DTfor0}) can be evaluated exactly by  using  such a formulation and basis, so the main computational cost is  from the MCF expansions with FFT.  In fact, the framework is also applicable to  Hermite functions, but Hermite approximation is less compelling for  at least two reasons (i) the lack of FFT; and (ii) slow decay of the solution or the source term.  As opposite to usual
Laplacian,  the fractional Laplacian of a function with typical exponential or algebraic decay will  decay algebraically at  much slower rate (see Propositions \ref{case1:expo}-\ref{case1:alg} of this paper).  Thus, the MCF approximation is more preferable.  Indeed,  ample numerical results show  that the fast solver for \eqref{dDsLap00} has a convergence perfectly in agreement with
the theoretical estimate for various decaying exact solutions tested.

The rest of this paper is organized as follows. In section \ref{sect2},  we first  introduce the mapped Chebyshev functions and generate the Fourier-like bi-orthogonal MCFs in one dimension.
In section \ref{sect3}, we describe the fast MCF-spectral-Galerkin method built upon the Dunford-Taylor formulation of the fractional Laplacian. We conduct the error estimates and provide ample numerical results to show the convergence order of the solver is perfectly in agreement with the theoretical prediction in section \ref{sect4}.  In the final section, we apply the solver to spatial discretisation of the  fractional nonlinear Schr{\"o}dinger equation, and also conclude the paper with some final remarks.

\section{Fourier-like mapped Chebyshev functions}\label{sect2}

In this section,   we  introduce the mapped Chebyshev functions, from which
we construct the Fourier-like bi-orthogonal MCFs as one of the important tools for the
efficient spectral algorithms to be designed in the forthcoming section.
% by transforming the usual Chebyshev polynomials from a finite interval to $\mathbb R.$

%equipped with the norm
%\begin{align}\label{normHsp}
%\big\lVert u\big\rVert_{H^r(\mathbb{R})}= \Big(\int_{\mathbb R} (1+\lvert\xi\rvert^{2})^r
%\big|\mathscr{F}[u](\xi)\big|^{2}d\xi\Big)^{1/2}.
%\end{align}

%We denote by $H^{\mu}(\mathbb{R}^d)$ (with $\mu\geq0$) the usual Hilbert spaces equipped with the semi-norm and norm
%\begin{equation}\label{Hspace}|u|_{H^\mu(\mathbb{R}^d)}:=\||\bm{\xi}|^\mu {\mathscr F}[u]\|_{L^2(\mathbb{R}^d)}, \quad\quad
%\|u\|_{H^\mu(\mathbb{R}^d)}:=\Big(\|u\|^2_{L^2(\mathbb{R}^d)}+|u|_{H^\mu(\mathbb{R}^d)}\Big)^{1/2}.
%\end{equation}

\subsection{Mapped Chebyshev functions}
Let  $T_n(y)=\cos(n\, {\rm arccos}(y)),$ $y\in \Lambda:=(-1,1)$  be the Chebyshev polynomial of degree $n$. The Chebyshev polynomials  satisfy the three-term recurrence relation
\begin{equation}
\label{recu}
T_{n+1}(y)=2y T_n(y)-T_{n-1}(y),\quad n\ge 1,
\end{equation}
with $T_0(y)=1$ and $T_1(y)=y.$
They form  a complete orthogonal system in $L_{\omega}^2(\Lambda)$, namely,
\begin{equation}\label{Chebpoly}
\dint_{\Lambda}T_n(y)T_m(y)\omega(y)\,{\rm d} y=\dfrac{\pi c_n}{2}\delta_{nm} \quad {\rm with}\quad \omega(y)=(1-y^2)^{-\frac{1}{2}},
\end{equation}
where $\delta_{nm}$ is the Kronecker symbol, and $c_0=2$ and $c_n=1$ for $n\ge 1$.
Recall the recurrence formulas (cf. \cite{szeg1939orthogonal}):
\begin{equation}\label{recu1}
y T_n(y)=(T_{n+1}(y)+T_{n-1}(y))/2,\;\;\; (1-y^2)T_n'(y)=\dfrac{n}{2}(T_{n-1}(y)-T_{n+1}(y)).
\end{equation}
%and
%\begin{equation}\label{recu2}
%(1-y^2)T_n'(y)=\dfrac{n}{2}(T_{n-1}(y)-T_{n+1}(y)).
%\end{equation}

We now define the mapped Chebyshev functions (MCFs) as in \cite{ben2002modified,shen2011spectral,shen2014approximations}.
\begin{defn}\label{defnMCF} {\em
Introduce the one-to-one algebraic mapping % between $\Lambda$ and $\mathbb R$
%To transfer the integral intervals $\Lambda$ to $\R$, we make following algebraic mapping
\begin{equation}\label{AlgeMapp}
x=\frac{y}{\sqrt{1-y^2}},\quad y=\frac{x}{\sqrt{1+x^2}},\quad x\in {\mathbb R},\;\; y\in \Lambda,
\end{equation}
and  define the MCFs as
 \begin{equation}\label{defiOMCF}
\mathbb{T}_n(x)= \frac{1} {\sqrt{c_n\pi/2}} \sqrt{1-y^2}\, T_n(y)= \frac{1}{\sqrt{c_n\pi/2}} \frac 1{\sqrt{1+x^2}}\, T_n\Big(\frac x {\sqrt{1+x^2}}\Big),
%\frac{\mu(y(x))}{\sqrt{c_n\pi/2}}T_n(y(x)).%\qquad {\rm with}\qquad\mu(y)=\sqrt{1-y^2}.
\end{equation}
for $x\in \mathbb R$ and integer $n\ge 0.$}
\end{defn}
\begin{remark}\label{Rmk21} {\em To enhance the resolution of MCFs, one can   incorporate
a scaling parameter $\nu>0$. More precisely, using the mapping %\eqref{AlgeMapp} takes the form
\begin{equation*}
x=\frac{\nu\,y}{\sqrt{1-y^2}},\quad y=\frac{x}{\sqrt{\nu^2+x^2}}, \quad x\in {\mathbb R},\;\; y\in \Lambda,
\end{equation*}
the scaled MFCs can be defined as
\begin{equation*}
\mathbb T_{n}^\nu (x):=\frac{1}{\sqrt{c_n\pi/2}} \frac {\nu^{1/2}}{\sqrt{\nu^2+x^2}}\, T_n\Big(\frac x {\sqrt{\nu^2+x^2}}\Big)=\nu^{-\frac{1}{2}} \mathbb T_{n}(x/\nu).
\end{equation*}
In fact, it is straightforward to extend the properties  and algorithms from the usual MCFs to the scaled MFCs. For clarity of presentation,  we shall not carry the scaling parameter in the algorithm descriptions and  error analysis, but use it in the numerical experiments.}
\end{remark}

We have the following important properties of the MCFs, which can be shown readily by using the definition \ref{defnMCF} and the properties of Chebyshev polynomials in \eqref{recu}-\eqref{recu1} (also see \cite[Proposition 2.4]{shen2014approximations}).
\begin{prop}\label{orthMCF} The MCFs are orthonormal in $L^2(\mathbb R),$
%\begin{equation}\label{OMCFsorth0}
%  \dint_{\R}\mathbb {T}_n(x)\,\mathbb{T}_m(x)\, {\rm d}x=\delta_{nm}.
%\end{equation}
and   we have
\begin{equation}\label{derivOrth}
\begin{split}
&{S}_{mn}={S}_{nm}= \dint_{\R}\mathbb{T}'_n(x)\,\mathbb{T}'_m(x)\,{\rm d}x\\
&=\begin{cases}
\displaystyle \frac{1}{c_n}\Big(\frac{(4c_{n-1}-c_{n-2})(n-1)^2}{16}+\frac{(4c_{n+1}-c_{n+2})(n+1)^2}{16}-\frac{c_{n}}{4}\Big), & {\rm if}\;\; m=n, \\[6pt]
\displaystyle \frac{1}{\sqrt{c_nc_{n+2}}}\Big(\frac{(c_{n}-c_{n+2})(n+1)}{8}-\frac{c_{n+1}(n+1)^2}{4}\Big), & {\rm if}\;\; m=n+2,\\[6pt]
\displaystyle \frac{1}{\sqrt{c_nc_{n+4}}}\Big(\frac{c_{n+2}(n+1)(n+3)}{16}\Big),  & {\rm if}\;\; m=n+ 4.\\
%\frac{1}{\sqrt{c_nc_{n-2}}}\left(\frac{(c_{n-2}-c_n)(n-1)}{8}-\frac{c_{n-1}(n-1)^2}{4}\right), \quad & {\rm if}\;\; m=n-2,\\
%\frac{1}{\sqrt{c_nc_{n-4}}}\left(\frac{c_{n-2}(n-1)(n-3)}{16}\right), \quad & {\rm if}\;\; m=n-4.
\end{cases}
\end{split}
\end{equation}
\end{prop}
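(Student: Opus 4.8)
The plan is to transport both claims back to the reference interval $\Lambda=(-1,1)$ through the algebraic map \eqref{AlgeMapp}, reducing everything to weighted integrals of Chebyshev polynomials that are handled by \eqref{Chebpoly} and the recurrences \eqref{recu1}. Differentiating the map gives $\rd x=(1-y^2)^{-3/2}\,\rd y$, and since $\sqrt{1-y^2}=(1+x^2)^{-1/2}$, substituting into $\int_{\R}\mathbb{T}_n\mathbb{T}_m\,\rd x$ lets the Jacobian cancel against the two factors $\sqrt{1-y^2}$, leaving exactly $\frac{2}{\pi\sqrt{c_nc_m}}\int_\Lambda T_nT_m\,\omega\,\rd y$, which equals $\delta_{nm}$ by \eqref{Chebpoly}. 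This settles the orthonormality.

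For the stiffness entries I would first compute the derivative. By the chain rule and $\rd y/\rd x=(1-y^2)^{3/2}$, a direct calculation shows $\sqrt{c_n\pi/2}\,\mathbb{T}_n'(x)=(1-y^2)\,\psi_n(y)$, where $\psi_n:=-yT_n+(1-y^2)T_n'$. The two identities in \eqref{recu1} collapse $\psi_n$ to the two-term form $\psi_n=\tfrac12\big[(n-1)T_{n-1}-(n+1)T_{n+1}\big]$. Changing variables in $S_{mn}=\int_{\R}\mathbb{T}_n'\mathbb{T}_m'\,\rd x$ then turns it into $\frac{2}{\pi\sqrt{c_nc_m}}\int_\Lambda(1-y^2)^{1/2}\psi_n\psi_m\,\rd y$, an integral now taken against the \emph{second-kind} Chebyshev weight $(1-y^2)^{1/2}=\omega^{-1}$.

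The clean way to finish is to re-express $\psi_n$ in the second-kind Chebyshev basis $\{U_k\}$, which is orthogonal for this weight with $\int_\Lambda(1-y^2)^{1/2}U_jU_k\,\rd y=\tfrac\pi2\delta_{jk}$. Using $2T_k=U_k-U_{k-2}$ one obtains the three-term expansion $\psi_n=\tfrac14\big[2n\,U_{n-1}-(n+1)U_{n+1}-(n-1)U_{n-3}\big]$. Since $\psi_n$ involves only $U_{n+1},U_{n-1},U_{n-3}$, orthogonality forces $S_{mn}=0$ whenever $m-n$ is odd or $|m-n|\ge 6$ (indices never coincide), so only $m-n\in\{0,\pm2,\pm4\}$ survive; in each surviving case the value is read off by matching the shared index and inserting the $\pi/2$ normalisation together with the prefactor $\frac{2}{\pi\sqrt{c_nc_m}}$. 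For $m=n$ the three matches $U_{n\pm1},U_{n-3}$ give $\tfrac{\pi}{32}\big[(n+1)^2+4n^2+(n-1)^2\big]=\tfrac{\pi}{16}(3n^2+1)$; for $m=n+2$ the two shared indices $U_{n+1},U_{n-1}$ give $-\tfrac\pi8(n+1)^2$; and for $m=n+4$ the single overlap $U_{n+1}$ gives $\tfrac{\pi}{32}(n+1)(n+3)$.

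I expect the only delicate point to be the bookkeeping at small $n$: the shifted indices $n-1,n-3$ (equivalently the conventions $U_{-1}=0$, $U_{-2}=-1$ hidden in $2T_k=U_k-U_{k-2}$) produce boundary corrections that the statement packages compactly through the factors $4c_{n-1}-c_{n-2}$, $c_n-c_{n+2}$ and $c_{n+2}$. The main, if routine, task is therefore to verify that these combinations reproduce the generic coefficients obtained above for large $n$ (for instance $4c_{n-1}-c_{n-2}=3$ when $n\ge 3$, so that $\tfrac{3(n-1)^2+3(n+1)^2}{16}-\tfrac14=\tfrac{3n^2+1}{8}$) while simultaneously encoding the correct exceptional values at $n=0,1,2$; this last matching is where the $c_n$-weights in the closed form earn their keep.
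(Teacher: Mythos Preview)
Your proposal is correct. Both you and the paper reduce the orthonormality to \eqref{Chebpoly} via the change of variables, and both compute $\mathbb{T}_n'(x)=\frac{1}{\sqrt{c_n\pi/2}}(1-y^2)\psi_n(y)$ with $\psi_n=\tfrac12\big[(n-1)T_{n-1}-(n+1)T_{n+1}\big]$, arriving at
\[
S_{mn}=\frac{2}{\pi\sqrt{c_nc_m}}\int_{\Lambda}(1-y^2)^{1/2}\psi_n(y)\psi_m(y)\,\rd y.
\]
From here the two arguments diverge. The paper stays entirely within the first-kind basis: it writes $(1-y^2)=1-y\cdot y$, applies the recurrence $yT_k=\tfrac12(T_{k+1}+T_{k-1})$ to each factor to obtain $y\psi_n=\tfrac14\big[(n-1)T_{n-2}-2T_n-(n+1)T_{n+2}\big]$, and then evaluates both $\int\psi_n\psi_m\,\omega$ and $\int(y\psi_n)(y\psi_m)\,\omega$ using \eqref{Chebpoly}. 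You instead recognise $(1-y^2)^{1/2}$ as the second-kind weight and pass to the $U_k$ basis via $2T_k=U_k-U_{k-2}$, so that a single orthogonality relation does all the work. Your route is a bit tidier conceptually (one orthogonality instead of a split into two), while the paper's has the minor advantage of never leaving the objects already introduced in \eqref{recu}--\eqref{recu1}; the $c_n$-bookkeeping for small $n$ that you flag is exactly what the paper's organisation of the answer encodes, and it is the same low-index checking either way.
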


\begin{comment}
Next, we can derive from \eqref{defiOMCF} and \eqref{deriMapp} that for any $u\in L^2(\R)$,
\begin{equation}\label{expau}
u(x)=\dsum^{\infty}_{m=0}u_m\widehat{T}_m(x),
\end{equation}
where
\begin{equation*}
\begin{split}
u_m&=\dint_{-\infty}^{\infty}u(x)\widehat{T}_m(x)dx=\sqrt{\frac{2}{c_m\pi}}\dint_\Lambda u(x(y)) \mu(y)T_m(y)(1-y^2)^{-3/2}dy
\\&=\sqrt{\frac{2}{c_m\pi}}\dint_\Lambda \widetilde{u}(y) T_m(y)\omega(y)dy,\qquad {\rm with} \qquad \widetilde{u}(y)=u(x(y))\omega(y).
\end{split}
\end{equation*}
\end{proof}
\end{comment}

\subsection{Fourier-like bi-orthogonal  MCFs in one dimension}
%In this subsection, we devote to a robust spectral solver for \eqref{dDsLap}, which is full diagonal in both mass and stiffness matrix.
% The key idea is using the Fourier-like basis.
%In this part, we devote to the Fourier-like basis for Mapped Chebyshev functions.
Let $\mathbb {P}_{N}$ be the set of all polynomials of degree at most $N$, and  define
the finite dimensional  space
\begin{equation}
\label{apprspac}
\mathbb V_{\!N}:=\big\{\phi: \phi(x)=g(x)\varphi(x), \; \forall \,  \varphi\in \mathbb{P}_N\big\},
\end{equation}
where $x,y$ are associated with the mapping \eqref{AlgeMapp} and
%For notational convenience, we denote %In order to absorb the weight function, we define (cf. \cite{shen2014approximations})
\begin{equation}\label{weigfunc}
%\mu(x)= \sqrt{\omega(y) \dfrac{{\rm d}y}{{\rm d} x}}=\sqrt{1-y^2}= \frac{1}{\sqrt{1+x^2}}.
g(x):= \sqrt{\omega(y) \dfrac{{\rm d}y}{{\rm d} x}}= \frac{1}{\sqrt{1+x^2}}=\sqrt{1-y^2}:=G(y).
\end{equation}
%{\cb$g(x)$} is defined in  \eqref{weigfunc}.
Note  that we have
\begin{equation}
\label{apprspac2}
\mathbb V_{\!N}:={\rm span}\big\{\mathbb{T}_n(x)\,:\, 0\leq n\leq N\big\}.%=\{\varphi:\varphi(x)=\mu(y(x))\Phi(y(x)),\forall\Phi\in\mathcal{P}_N\}=
\end{equation}
%which is the image of $\mathbb P_N$ in $y$ under the mapping \eqref{AlgeMapp}.

Following the spirit of \cite{She.W09},  we next introduce a Fourier-like basis of $\mathbb V_{\!N}$, which is orthogonal in both $L^2$- and $H^1$-inner products.   For this purpose,
let $\bs S$ be a square matrix of order $N+1$ with entries given by \eqref{derivOrth},
and let $\bs I$ the identity matrix of the same size.
Note from \eqref{derivOrth} that $\bs S$ is  a symmetric positive definite matrix with nine nonzero diagonals.   Thus, all the eigenvalues are real and eigenvectors are orthonormal.  To this end, let $\bs{E}=(e_{jk})_{j,k=0,\cdots,N}$ be the matrix formed by the orthonormal eigenvectors of $\bs{S}$, and
$\bs \Sigma={\rm diag}\{\lambda_k\}$ be the diagonal matrix of  the corresponding eigenvalues. Thus, we have
\begin{equation}
\label{eige}
\bs{S}\bs{E}= \bs{E}\,\bs \Sigma, \quad \bs{E}^t\bs{E}=\bs{I}.
\end{equation}

We remark that  with an even and odd separation, we can work with two symmetric positive definite seven-diagonal sub-matrices to compute the eigenvalues and eigenvectors of $\bs S$, which  should be more stable for large $N.$

%Clearly, the matrix $\bs{E}$ is nonsingular, so the linear combinations
%We  are now ready to introduce the  Fourier-like  MCFs, which are mutually orthogonal with respect to both the $L^2$- and $H^1$-inner products.
\begin{lemma}\label{partial-diagonal}  Let $\bs E=(\bs e_0,\bs e_1,\cdots, \bs e_N)$ be the matrix of the  eigenvectors of $\bs S,$ i.e., $\bs S \bs e_p=\lambda_p \bs e_p $ for $0\le p\le N.$ Define
\begin{equation}\label{neweigefunc}
\widehat{\mathbb T}_p(x):=\dsum_{j=0}^{N}e_{jp}\mathbb{T}_j(x), \quad \bs e_p=(e_{0p},e_{1p},\cdots, e_{Np})^t,\quad  0\leq p\leq N.
\end{equation}
Then $\{\widehat{\mathbb T}_p\}^N_{p=0}$  form an equivalent basis of  $\mathbb{V}_{\!N},$  and they are bi-orthogonal in the sense that
\begin{equation}\label{bi-orth-eqn}
(\widehat{\mathbb T}_p,\widehat{\mathbb T}_q)_{L^2(\mathbb{R})}=\delta_{pq},\quad  \big(\,\widehat{\mathbb T}_p', \widehat{\mathbb T}_q'\,\big)_{L^2(\mathbb{R})}=\lambda_p\delta_{pq},
\quad 0\le p,q\le N.
\end{equation}
%for $0\le p,q\le N.$
\end{lemma}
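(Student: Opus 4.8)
The plan is to regard the whole statement as a transfer of two structural facts already in hand---the $L^2$-orthonormality of the MCFs (Proposition \ref{orthMCF}) and the spectral decomposition \eqref{eige} of the symmetric positive-definite stiffness matrix $\bs S$---through the linear change of basis \eqref{neweigefunc}. No analysis beyond these ingredients is needed; everything reduces to index bookkeeping.

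First I would settle that $\{\widehat{\mathbb T}_p\}_{p=0}^N$ is indeed a basis. Since $\bs S$ is symmetric positive definite, the spectral theorem guarantees a real orthonormal eigensystem, so $\bs E$ is an orthogonal (hence invertible) matrix. Reading \eqref{neweigefunc} as expressing each $\widehat{\mathbb T}_p$ as a linear combination of the $\mathbb T_j$ with coefficient matrix $\bs E$, the invertibility of $\bs E$ shows that the map $(\mathbb T_0,\dots,\mathbb T_N)\mapsto(\widehat{\mathbb T}_0,\dots,\widehat{\mathbb T}_N)$ is an isomorphism; hence $\{\widehat{\mathbb T}_p\}_{p=0}^N$ spans $\mathbb V_{\!N}$ and forms an equivalent basis.

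For the $L^2$-relation I would simply expand
\[
(\widehat{\mathbb T}_p,\widehat{\mathbb T}_q)_{L^2(\mathbb R)}=\dsum_{i,j=0}^{N} e_{ip}e_{jq}\,(\mathbb T_i,\mathbb T_j)_{L^2(\mathbb R)}=\dsum_{i=0}^{N} e_{ip}e_{iq}=\bs e_p^{\,t}\bs e_q=\delta_{pq},
\]
using orthonormality of the MCFs and then the column-orthonormality $\bs E^t\bs E=\bs I$ from \eqref{eige}. For the $H^1$-relation, the same expansion produces the stiffness-matrix entries $(\mathbb T_i',\mathbb T_j')_{L^2(\mathbb R)}=S_{ji}$ from \eqref{derivOrth}, so
\[
(\widehat{\mathbb T}_p',\widehat{\mathbb T}_q')_{L^2(\mathbb R)}=\dsum_{i,j=0}^{N} e_{ip}\,S_{ij}\,e_{jq}=\bs e_p^{\,t}\bs S\,\bs e_q=\lambda_q\,\bs e_p^{\,t}\bs e_q=\lambda_q\delta_{pq}=\lambda_p\delta_{pq},
\]
where the third equality uses $\bs S\bs e_q=\lambda_q\bs e_q$, the fourth again uses $\bs E^t\bs E=\bs I$, and the last rewrites $\lambda_q\delta_{pq}$ as $\lambda_p\delta_{pq}$ since the factor vanishes unless $p=q$.

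The argument has no genuine obstacle; the only points demanding care are the transpose convention in the definition $S_{mn}=(\mathbb T_n',\mathbb T_m')_{L^2(\mathbb R)}$, so that the indices line up correctly when one reads off $(\mathbb T_i',\mathbb T_j')=S_{ji}$ (harmless here by the symmetry of $\bs S$), and invoking the spectral theorem, which is legitimate precisely because of the positive-definiteness and nine-diagonal symmetry of $\bs S$ noted just before the lemma.
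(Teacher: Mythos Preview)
Your proof is correct and follows essentially the same approach as the paper's: expand the inner products via \eqref{neweigefunc}, invoke the $L^2$-orthonormality of the MCFs and the definition of $\bs S$, and then use the spectral relations $\bs E^t\bs E=\bs I$ and $\bs S\bs e_q=\lambda_q\bs e_q$ (the paper phrases the $H^1$ step as $(\bs E^t\bs S\bs E)_{pq}=(\bs\Sigma)_{pq}$, but that is the same computation). Your explicit justification that $\{\widehat{\mathbb T}_p\}$ is a basis, via the invertibility of $\bs E$, is a small addition the paper leaves implicit.
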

\begin{proof} In view of the definition \eqref{neweigefunc},  we infer from the orthogonality of MCFs  and  \eqref{eige} that
\begin{equation*}\label{newbasmtrix}
\begin{split}
(\widehat{\mathbb T}_p,\widehat{\mathbb T}_q)_{L^2(\mathbb{R})}& =\sum_{j=0}^{N} \sum_{k=0}^N e_{kp}e_{jq}(\mathbb{T}_k,\mathbb{T}_j)_{L^2(\mathbb{R})}=\sum_{j=0}^{N} \sum_{k=0}^N e_{jq}\delta_{jk}e_{kp} = \sum_{k=0}^N  e_{kq} e_{kp}= \bs e_q^t \bs e_p = \delta_{pq}.
\end{split}\end{equation*}
Similarly, we can show that
\begin{equation*}\label{newbasmtrix2}
\begin{split}
\big(\widehat{\mathbb T}_p',\widehat{\mathbb T}_q'\big)_{L^2(\mathbb{R})}& =\sum_{j=0}^{N} \sum_{k=0}^N e_{kp}e_{jq}(\mathbb{T}_k',\mathbb{T}_j')_{L^2(\mathbb{R})}=\sum_{j=0}^{N} \sum_{k=0}^N e_{jq}S_{jk}e_{kp}
 %\\&
 =(\bs{E}^t \bs{S}\bs{E})_{pq}=(\bs \Sigma)_{pq}=\lambda_p\delta_{pq}.
\end{split}\end{equation*}
This ends the proof.
\end{proof}

\section{MCF-spectral-Galerkin method based on Dunford-Taylor formulation}\label{sect3}

In this section, we describe the fast MCF spectral-Galerkin algorithm for a model elliptic problem with integral  fractional Laplacian. We then apply the solver for spatial discretisation of some nonlinear fractional PDEs in the next section.

\subsection{Some notation}
%We first  recap on the definitions of the integral fractional Laplacian in different senses, and introduce the related fractional Sobolev spaces.
Denote by  $\mathscr{S}(\mathbb{R}^d)$ the functions of  the Schwartz class, and  let $\mathscr{S}'(\mathbb{R}^d)$
be the topological dual of  $\mathscr{S}(\mathbb{R}^d).$ For any $u\in \mathscr{S}(\mathbb{R}^d),$ its Fourier transform is given by
$$
{\mathscr F}[u](\xi)=\frac {1}{(2\pi)^{\frac d 2}}\int_{\mathbb R^d} u(x) e^{-\ri \xi\cdot x}{\rm d} x,\quad \forall\, \xi \in {\mathbb R}^d.
$$
For real $s\ge 0,$ we define  the fractional Sobolev space (cf. \cite[P. 530]{Nezza2012BSM}):
%(as in\cite[P. 30]{lions1972non} and \cite[Ch. 1]{agranovich2015sobolev}):
\begin{align}\label{Hssps}
H^s(\mathbb{R}^d)=\Big\{u\in L^{2}(\mathbb {R}^d):\,  \lVert u\rVert_{H^s(\mathbb{R}^d)}^2=\int_{\mathbb R^d} (1+\lvert\xi\rvert^{2s})
\big|\mathscr{F}[u](\xi)\big|^{2}{\rm d}\xi<+\infty\Big\},
\end{align}
and  an analogous definition for the case $s<0$  is to set
\begin{align}\label{Hsspsf}
H^s(\mathbb{R}^d)=\Big\{u\in \mathscr{S}'(\mathbb{R}^d):\,  \lVert u\rVert_{H^s(\mathbb{R}^d)}^2=\int_{\mathbb R^d} (1+\lvert\xi\rvert^{2})^s
\big|\mathscr{F}[u](\xi)\big|^{2}{\rm d}\xi<+\infty\Big\},
\end{align}
although in this case the space $H^s(\mathbb{R}^d)$ is not a subset of $L^2(\mathbb R^d).$

According to \cite[Prop. 3.4]{Nezza2012BSM}, we know that  for $s\in (0,1),$  the space $H^s(\mathbb{R}^d)$ can also be characterised by the fractional Laplacian defined in \eqref{fracLap-defn}, equipped with the norm
$$
\lVert u\rVert_{H^s(\mathbb{R}^d)}=\big(\|u\|_{L^2(\mathbb R^d)}^2+[u]_{H^s(\mathbb R^d)}^2\big)^{\frac 1 2},
$$
where $[u]_{H^s(\mathbb R^d)}$ is so-called Gagliardo (semi)norm of $u,$ given by
\begin{equation}\label{uHs}
[u]_{H^s(\mathbb R^d)}=\Big(\int_{\mathbb R^d}\int_{\mathbb R^d} \frac{|u(x)-u(y)|^2}{|x-y|^{d+2s}}{\rm d}x {\rm d} y\Big)^{\frac 1 2}.
\end{equation}
Indeed, by \cite[Prop. 3.6]{Nezza2012BSM}, we have that for $s\in (0,1),$
\begin{equation}\label{propS}
[u]_{H^s(\mathbb R^d)}^2=2C_{d,s}^{-1}\|(-\Delta )^{s/2} u \|^2_{L^2(\mathbb R^d)}.
\end{equation}

We have the following important space interpolation property  (cf. \cite[Ch. 1]{Agranovich2015Book}), which will be used for the error analysis later on.
\begin{lemma}\label{interpola}
For real $r_0,r_1\ge 0,$ let $r=(1-\theta) r_0+\theta r_1$ with $\theta \in[0,1]$. Then  for any $u\in H^{r_0}(\mathbb{R}^d)\cap H^{r_1}(\mathbb{R}^d),$  we have
\begin{equation}\label{Hrinterp}
\|u\|_{H^{r}(\mathbb{R}^d)}\le \|u\|^{1-\theta}_{H^{r_0}(\mathbb{R}^d)} \, \|u\|^{\theta}_{H^{r_1}(\mathbb{R}^d)}.
\end{equation}
In particular,  for $s\in [0,1],$ we have
\begin{equation}\label{orthNs22}
 \|u \|_{H^s(\mathbb R^d)}\le\|u \|_{L^2(\mathbb R^d)}^{1-s}\,
   \|u \|_{H^1(\mathbb R^d)}^{s}.
\end{equation}
\end{lemma}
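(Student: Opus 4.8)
The plan is to work entirely on the Fourier side, where by \eqref{Hssps} all three norms are weighted $L^2$-norms of $\mathscr{F}[u]$, and to reduce the statement to two applications of Hölder's inequality. Writing $w_\sigma(\xi):=1+|\xi|^{2\sigma}$ for the weight in \eqref{Hssps}, the first step is to establish the pointwise bound
\[
w_r(\xi)\le w_{r_0}(\xi)^{1-\theta}\,w_{r_1}(\xi)^{\theta},\qquad \xi\in\mathbb{R}^d,
\]
for $r=(1-\theta)r_0+\theta r_1$. Setting $t=|\xi|^2\ge 0$ and using $t^r=(t^{r_0})^{1-\theta}(t^{r_1})^{\theta}$, this is precisely the two-term Hölder inequality $a_1b_1+a_2b_2\le (a_1^p+a_2^p)^{1/p}(b_1^q+b_2^q)^{1/q}$ applied with $a_1=b_1=1$, $a_2=t^{r_0(1-\theta)}$, $b_2=t^{r_1\theta}$ and conjugate exponents $p=1/(1-\theta)$, $q=1/\theta$.

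Next I would multiply through by $|\mathscr{F}[u](\xi)|^2$ and split this factor as $|\mathscr{F}[u]|^{2(1-\theta)}|\mathscr{F}[u]|^{2\theta}$, producing the single pointwise inequality
\[
w_r(\xi)\,|\mathscr{F}[u]|^2\le \big(w_{r_0}(\xi)\,|\mathscr{F}[u]|^2\big)^{1-\theta}\big(w_{r_1}(\xi)\,|\mathscr{F}[u]|^2\big)^{\theta}.
\]
Integrating over $\mathbb{R}^d$ and applying Hölder's inequality a second time, now on the integral with the same exponents $1/(1-\theta)$ and $1/\theta$, yields
\[
\|u\|_{H^r}^2\le \Big(\int_{\mathbb{R}^d} w_{r_0}|\mathscr{F}[u]|^2\,{\rm d}\xi\Big)^{1-\theta}\Big(\int_{\mathbb{R}^d} w_{r_1}|\mathscr{F}[u]|^2\,{\rm d}\xi\Big)^{\theta}=\|u\|_{H^{r_0}}^{2(1-\theta)}\,\|u\|_{H^{r_1}}^{2\theta}.
\]
Taking square roots gives \eqref{Hrinterp}, and the special case \eqref{orthNs22} follows immediately by choosing $r_0=0$, $r_1=1$, $\theta=s$, since $H^0(\mathbb{R}^d)=L^2(\mathbb{R}^d)$.

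The degenerate endpoints $\theta\in\{0,1\}$, where the Hölder exponents blow up, I would dispose of separately as trivial identities, and the assumption $u\in H^{r_0}\cap H^{r_1}$ guarantees the right-hand side is finite. The only genuinely non-routine step is the pointwise weight bound, and it is worth noting that its precise form is where the chosen normalisation matters: had the norm been defined with the weight $(1+|\xi|^2)^\sigma$, this step would collapse to the identity $(1+|\xi|^2)^r=((1+|\xi|^2)^{r_0})^{1-\theta}((1+|\xi|^2)^{r_1})^{\theta}$ and only the single integral application of Hölder would be needed. With the weight $1+|\xi|^{2\sigma}$ of \eqref{Hssps} the comparison is a strict inequality in general, which is exactly why the two-term Hölder inequality is invoked to keep the constant equal to one. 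I expect this pointwise comparison to be the main—and essentially the only—obstacle; the remainder is the standard measure-theoretic Hölder argument.
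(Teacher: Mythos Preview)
Your argument is correct, and in fact more careful than what appears in the paper's source. In the compiled paper the lemma carries no proof at all---it is stated with a citation to Agranovich's book. The \LaTeX\ source does contain a commented-out appendix proof, and that proof does exactly what you anticipate in your final paragraph: it tacitly works with the weight $(1+|\xi|^2)^{r}$, writes $(1+|\xi|^2)^{r}=(1+|\xi|^2)^{(1-\theta)r_0}(1+|\xi|^2)^{\theta r_1}$ as an identity, splits $|\mathscr F[u]|^2=|\mathscr F[u]|^{2(1-\theta)}|\mathscr F[u]|^{2\theta}$, and applies a single integral H\"older with exponents $1/(1-\theta)$ and $1/\theta$. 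That is, the paper's (suppressed) proof uses the Bessel-potential normalisation rather than the one actually declared in \eqref{Hssps}. Your insertion of the two-term discrete H\"older step to obtain the pointwise bound $1+|\xi|^{2r}\le (1+|\xi|^{2r_0})^{1-\theta}(1+|\xi|^{2r_1})^{\theta}$ is precisely what is needed to make the argument honest for the weight $1+|\xi|^{2\sigma}$, and costs nothing in the constant. So the two approaches are the same in spirit, but yours closes a small normalisation gap that the paper's draft proof leaves open.
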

%For  the readers' reference,  we sketch the derivation of this interpolation property in Appendix \ref{Appendix-1}.

%\subsection{Model problem}
\subsection{Dunford-Taylor formulation of the fractional Laplacian}
To fix the idea, we consider  %following PDEs involving fractional Laplacian in $\mathbb{R}^d$:
 \begin{equation}\label{dDsLap}
(-\Delta)^s u(x)+\gamma u(x)=f(x) \;\;\;  {\rm in}\;\;  \mathbb{R}^d; \quad
  u(x)=0 \;\;\;  {\rm as}\;\;  |x|\to \infty,
\end{equation}
where $s\in (0,1), \gamma>0$ and  $f\in H^{-s}(\mathbb{R}^d).$

A weak form for \eqref{dDsLap} is  to  find $u\in H^s(\mathbb R^d)$ such that
\begin{equation}\label{uvsh}
\begin{split}
{\mathcal B}(u,v)& =\big((-\Delta)^{s/2} u,  (-\Delta)^{s/2} v  \big)_{L^2(\mathbb{R}^d)}+\gamma(u,v)_{L^2(\mathbb{R}^d)}\\
& =[u,v]_{H^s({\mathbb R}^d)}+ \gamma(u,v)_{L^2(\mathbb{R}^d)}=(f,v)_{L^2(\mathbb{R}^d)},\quad \forall v\in  H^s(\mathbb R^d),
\end{split}
\end{equation}
where $[u,v]_{H^s({\mathbb R}^d)}$ induces the Gagliardo (semi)norm in  \eqref{uHs}.

By the definitions \eqref{Ftransform} and \eqref{Hssps}, we immediately obtain the continuity  and coercivity of the bilinear form $a(\cdot,\cdot)$, that is, for any $u,v\in H^{s}(\mathbb{R}^d),$
\begin{equation}
\label{contcoer}\begin{split}
 |{\mathcal B}(u,v)| \lesssim\|u\|_{H^{s}(\mathbb{R}^d)}\|v\|_{H^{s}(\mathbb{R}^d)},\quad  |\mathcal B(u,u)|\gtrsim\|u\|_{H^{s}(\mathbb{R}^d)}^2.
\end{split}\end{equation}
Then, we derive from the Lax-Milgram lemma (cf. \cite{babuska1972survey}) that the problem \eqref{uvsh} admits a unique solution satisfying
$$\|u\|_{H^{s}(\mathbb{R}^d)}\lesssim \|f\|_{H^{-s}(\mathbb{R}^d)}.$$
In view of the definitions in \eqref{Ftransform}-\eqref{fracLap-defn}, we have the equivalent forms of
  $[u,v]_{H^s({\mathbb R}^d)}$ as follows
  \begin{align}%\label{equivforms}
  [u,v]_{H^s({\mathbb R}^d)}&= % \big((-\Delta)^{s/2} u,  (-\Delta)^{s/2} v  \big)_{L^2(\mathbb{R}^d)}
  \int_{\mathbb R^d}\int_{\mathbb R^d} \frac{(u(x)-u(y))(v(x)-v(y))}{|x-y|^{d+2s}}{\rm d}x {\rm d} y \label{physform}\\[4pt]
  &= \int_{\mathbb R^d}|\xi|^{2s} {\mathscr F}[u](\xi)\overline{{\mathscr F}[v](\xi)} \,{\rm d}\xi. \label{freqform}
  \end{align}
  It is noteworthy that the direct implementation of a numerical scheme based on \eqref{physform} (i.e., in physical space) is very difficult. Most of the existing works  (see, e.g.,
  \cite{mao2017hermite,tang2018hermite,tang2019rational}) are therefore mainly based on \eqref{freqform} (i.e.,  in the frequency space).   The Hermite function approaches can  take the advantage that the Fourier transforms of  Hermite functions  are  explicitly known.  However, in multiple dimensions,  the  non-separable/singular factor  $|\xi|^{2s}=(\xi_1^2+\cdots+\xi_d^2)^s$ makes the tensorial   approach computationally prohibitive.  On the other hand, the fractional Laplacian operator may become rather complicated when a coordinate transform is applied, so the mapped Chebyshev approximation can not be applied  in either of the above formulations.

  In what follows, we  resort to an alternative formulation of the fractional Laplacian that can overcome these numerical difficulties.
According to  \cite[Theorem 4.1]{bonito2019numerical}, we have  the following  Dunford-Taylor formulation of the integral fractional Laplacian.
%
%On the other hand, we refer to Bonito et al. in \cite{bonito2017numerical} introduce a Dunford Taylor formulation for fractional Laplacian on the bounded domain.
%The Dunford-Taylor Integral Representation can be naturally extended to unbounded domain case. The following proposition can be founded in \cite[Theorem 4.1]{bonito2017numerical}.
\begin{lemma}\label{DTthm}
 For any $u,v\in H^{s}(\mathbb{R}^d)$ with $s\in (0,1)$, we have
\begin{equation}\label{DTfor}
\left((-\Delta)^{\frac{s}2} u,(-\Delta)^{\frac{s}2} v\right)_{L^2(\mathbb{R}^d)}=C_s \int_0^\infty t^{1-2s} \int_{\mathbb R^d}
 \big((-\Delta)(\mathbb I-t^2 \Delta)^{-1} u \big)(x)\,  v(x)\, {\rm d}x\,{\rm d}t,
\end{equation}
where $\mathbb I$ is the identity operator and
\begin{equation}
C_s=\frac{2 \sin (\pi s)}{\pi}.
\end{equation}
\end{lemma}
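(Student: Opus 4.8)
The plan is to pass to the frequency side via Parseval's identity and reduce both members of \eqref{DTfor} to a common weighted $L^2$ pairing of the Fourier transforms $\mathscr{F}[u]$ and $\mathscr{F}[v]$, after which the identity collapses to a single scalar integral in the auxiliary variable $t$. First I would record the Fourier symbols. Since $(-\Delta)^{s/2}$ acts as multiplication by $|\xi|^s$, Parseval's theorem turns the left-hand side of \eqref{DTfor} into $\int_{\mathbb{R}^d}|\xi|^{2s}\,\mathscr{F}[u](\xi)\,\overline{\mathscr{F}[v](\xi)}\,{\rm d}\xi$, i.e. the right-hand side of \eqref{freqform}. On the right of \eqref{DTfor}, the resolvent $(\mathbb{I}-t^2\Delta)^{-1}$ has symbol $(1+t^2|\xi|^2)^{-1}$ and $-\Delta$ has symbol $|\xi|^2$, so for each fixed $t$ the inner $x$-integral equals, again by Parseval, $\int_{\mathbb{R}^d}\frac{|\xi|^2}{1+t^2|\xi|^2}\,\mathscr{F}[u](\xi)\,\overline{\mathscr{F}[v](\xi)}\,{\rm d}\xi$.

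The second step is to interchange the $t$- and $\xi$-integrations and to carry out the $t$-integral explicitly. After the swap, the weight multiplying $\mathscr{F}[u]\,\overline{\mathscr{F}[v]}$ is $C_s\int_0^\infty \frac{t^{1-2s}|\xi|^2}{1+t^2|\xi|^2}\,{\rm d}t$, and the substitution $\tau=t|\xi|$ rescales this to $C_s\,|\xi|^{2s}\int_0^\infty\frac{\tau^{1-2s}}{1+\tau^2}\,{\rm d}\tau$. The remaining scalar integral is a Beta-function evaluation: with $w=\tau^2$ it becomes $\tfrac12\int_0^\infty \frac{w^{-s}}{1+w}\,{\rm d}w=\tfrac12\,B(1-s,s)=\tfrac12\,\Gamma(s)\Gamma(1-s)=\frac{\pi}{2\sin(\pi s)}$ by Euler's reflection formula. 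Hence the weight equals $C_s\,\frac{\pi}{2\sin(\pi s)}\,|\xi|^{2s}=|\xi|^{2s}$, precisely because $C_s=2\sin(\pi s)/\pi$, so the right-hand side of \eqref{DTfor} reduces to the same $\int_{\mathbb{R}^d}|\xi|^{2s}\,\mathscr{F}[u]\,\overline{\mathscr{F}[v]}\,{\rm d}\xi$ obtained in the first step, which closes the identity.

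The main obstacle is the rigorous justification of the Fubini/Tonelli interchange, not any single computation. One must verify absolute integrability of $t^{1-2s}\frac{|\xi|^2}{1+t^2|\xi|^2}\,|\mathscr{F}[u]|\,|\mathscr{F}[v]|$ over $(0,\infty)\times\mathbb{R}^d$, and the hypothesis $s\in(0,1)$ is exactly what makes the $t$-integral converge: the integrand behaves like $t^{1-2s}$ as $t\to0^+$ (integrable since $1-2s>-1$) and like $t^{-1-2s}$ as $t\to\infty$ (integrable since $-1-2s<-1$). Applying Tonelli to the nonnegative integrand and then bounding the resulting $\xi$-integral by $\frac{\pi}{2\sin(\pi s)}\int_{\mathbb{R}^d}|\xi|^{2s}|\mathscr{F}[u]||\mathscr{F}[v]|\,{\rm d}\xi\le \frac{\pi}{2\sin(\pi s)}\,\big\||\xi|^s\mathscr{F}[u]\big\|_{L^2}\big\||\xi|^s\mathscr{F}[v]\big\|_{L^2}$, which is finite because $u,v\in H^s(\mathbb{R}^d)$, confirms finiteness and legitimizes the interchange. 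The conjugation bookkeeping in Parseval is then routine, since the multipliers $|\xi|^s$ and $\frac{|\xi|^2}{1+t^2|\xi|^2}$ are real and even in $\xi$, so the manipulations are consistent with the sesquilinear identity used throughout.
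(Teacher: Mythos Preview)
Your argument is correct. The paper does not actually prove Lemma~\ref{DTthm}; it is quoted directly from \cite[Theorem~4.1]{bonito2019numerical}. Your Fourier-side derivation is the natural self-contained proof, and the scalar integral you evaluate, $\int_0^\infty \tau^{1-2s}(1+\tau^2)^{-1}\,{\rm d}\tau=\pi/(2\sin(\pi s))$, is precisely the identity the paper later records and uses in the proof of Theorem~\ref{newtheorem} (cf.\ \eqref{ytxi0}--\eqref{form1}), so your computation is fully consistent with the paper's internal machinery.
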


Let us   denote $ w=w(u,t):= (\mathbb I-t^2 \Delta)^{-1} u(x).$ Then there holds
\begin{equation}\label{wutprob}
-t^2\Delta w+w=u\quad {\rm in}\;\; {\mathbb R}^d, \quad {\rm so}\;\;\;
(-\Delta)(\mathbb I-t^2 \Delta)^{-1} u =-\Delta w=t^{-2}(u-w).
\end{equation}
%%Therefore, the above formulation is a direct consequence of  \eqref{DTfor}   and the relation: %the following factors:
%and
%\begin{equation}\label{Deltua}
%(-\Delta)(\mathbb I-t^2 \Delta)^{-1} u =-\Delta w=t^{-2}(u-w).
%\end{equation}
%With the formulation \eqref{DTfor} at our disposal,
 As a result,  we can rewrite the weak form \eqref{uvsh} as:  find $u\in H^s(\mathbb R^d)$ such that
\begin{equation}\label{auveqn2}
 {\mathcal B}(u,v)=C_s \int_0^\infty t^{-1-2s} (u-w, v)_{L^2(\mathbb{R}^d)}\,  {\rm d}t+\gamma\,(u,v)_{L^2(\mathbb{R}^d)}=(f,v)_{L^2(\mathbb{R}^d)},\;\; \forall v\in  H^s(\mathbb R^d),
\end{equation}
where $w=w(u,t)$ solves
\begin{equation}\label{coneta}
t^{2} (\nabla w,\nabla \psi)_{L^2(\mathbb{R}^d)}+(w, \psi)_{L^2(\mathbb{R}^d)}=(u, \psi)_{L^2(\mathbb{R}^d)}, \quad \forall \psi \in H^{1}(\mathbb{R}^{d}).
\end{equation}
It is evident that the wellposedness of \eqref{auveqn2}-\eqref{coneta}  follows from its equivalence to \eqref{uvsh}.

\subsection{The MCF spectral-Galerkin scheme and its implementation}
Define % the approximation space
\begin{equation}\label{VnVn}
{\mathbb V}_{\!N}^d=\mathbb V_{\!N}\otimes\cdots \otimes \mathbb V_{\!N},
\end{equation}
which is the tensor product of $d$ copies of  $\mathbb V_{\!N}$  defined in \eqref{apprspac}.
Here,  ${\mathbb V}_{\!N}^1=\mathbb V_{\!N},$ and denote $I_N^d: C(\mathbb R^d)\to {\mathbb V}_{\!N}^d$ the tensorial mapped Chebyshev interpolation operator.
The MCF spectral-Galerkin approximation to \eqref{auveqn2}-\eqref{coneta} is to find  $u_N\in {\mathbb V}_{\!N}^d$ such that
\begin{equation}\label{auveqn23}
\begin{split}
 {\mathcal B}_N(u_N,v_N)&=C_s \int_0^\infty t^{-1-2s} (u_N-w_N, v_N)_{L^2(\mathbb{R}^d)}\,  {\rm d}t+\gamma(u_N,v_N)_{L^2(\mathbb{R}^d)}\\
 & =(I_N^d f,v_N)_{L^2(\mathbb{R}^d)},\quad \forall v_N\in \mathbb{V}_{\!N}^d,
 \end{split}
\end{equation}
where we find $w_N:=w_N(u_N,t)\in \mathbb{V}_{\!N}^d$ such that for any $t>0,$
\begin{equation}\label{bSeqn}
t^{2} (\nabla w_N,\nabla \psi)_{L^2(\mathbb{R}^d)}+(w_N, \psi)_{L^2(\mathbb{R}^d)}=(u_N, \psi)_{L^2(\mathbb{R}^d)}, \quad \forall \psi \in \mathbb{V}_{\!N}^d.
\end{equation}
%{\color{red}\bf The well-posedness is implied by the implementation?}

Define the $d$-dimensional tensorial Fourier-like basis and denote the vector of the corresponding eigenvalues in \eqref{eige} by
\begin{equation}\label{mathTT}
 \widehat {\mathbb T}_n(x)=\displaystyle\prod^d_{j=1} \widehat {\mathbb T}_{n_j}(x_j), \quad x\in\mathbb{R}^d;\quad
\lambda_n=(\lambda_{n_1},\cdots,\lambda_{n_d})^t .
\end{equation}
Accordingly,  we have %can define the $d$-dimensional approximate space
\begin{equation}
\label{apprspacRd}
\mathbb{V}_{\!N}^d={\rm span}\big\{ \widehat {\mathbb T}_n(x),~n\in\Upsilon_{\!N}\big\},
\end{equation}
where the index set
\begin{equation}\label{UpsilonA}
  \Upsilon_{\!N}:=\big\{n=(n_1,\cdots, n_d)\,:\, 0\le n_j\le N,\; 1\le j\le d\big\}.
\end{equation}

As an extension of \eqref{bi-orth-eqn}, we have the following attractive property of the tensorial Fourier-like MCFs.
\begin{thm}  For the tensorial Fourier-like MCFs, we have
\begin{equation}\label{pqN}
( \widehat {\mathbb T}_p,  \widehat {\mathbb T}_q)_{L^2(\mathbb{R}^d)}=\delta_{pq}\,;  \quad
\big(\nabla \widehat {\mathbb T}_p,  \nabla \widehat {\mathbb T}_q\big)_{L^2(\mathbb{R}^d)}=|\lambda_p|_1\,\delta_{pq},
\end{equation}
where $p,q\in \Upsilon_{\!N}$ and
\begin{equation}\label{pqnotes}
\delta_{pq}=\prod_{j=0}^d \delta_{p_jq_j},  \quad  |\lambda_p|_1= %\sum_{j=0}^N
\lambda_{p_1}+\cdots+\lambda_{p_d}.
\end{equation}
\end{thm}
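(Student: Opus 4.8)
The plan is to reduce the $d$-dimensional identities to the one-dimensional bi-orthogonality already proved in Lemma~\ref{partial-diagonal}, relying only on the tensor-product definition \eqref{mathTT} of $\widehat{\mathbb T}_n$ together with Fubini's theorem. Since $\widehat{\mathbb T}_p(x)=\prod_{j=1}^d \widehat{\mathbb T}_{p_j}(x_j)$ is a product of functions each depending on a single coordinate, every integral over $\mathbb{R}^d$ that appears factors into a product of one-dimensional integrals over $\mathbb{R}$, at which point \eqref{bi-orth-eqn} can be applied coordinatewise.

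For the $L^2$ identity, I would write
\begin{equation*}
( \widehat {\mathbb T}_p,  \widehat {\mathbb T}_q)_{L^2(\mathbb{R}^d)}
=\int_{\mathbb{R}^d}\prod_{j=1}^d \widehat{\mathbb T}_{p_j}(x_j)\,\widehat{\mathbb T}_{q_j}(x_j)\,{\rm d}x
=\prod_{j=1}^d (\widehat{\mathbb T}_{p_j},\widehat{\mathbb T}_{q_j})_{L^2(\mathbb{R})}
=\prod_{j=1}^d \delta_{p_jq_j}=\delta_{pq},
\end{equation*}
where the middle equality is Fubini and the last step uses the first part of \eqref{bi-orth-eqn} in each coordinate. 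This yields orthonormality and, in particular, confirms that $\{\widehat{\mathbb T}_n : n\in\Upsilon_{\!N}\}$ is a basis of $\mathbb{V}_{\!N}^d$.

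For the gradient identity, the key observation is that $\nabla\widehat{\mathbb T}_p\cdot\nabla\widehat{\mathbb T}_q=\sum_{i=1}^d \partial_{x_i}\widehat{\mathbb T}_p\,\partial_{x_i}\widehat{\mathbb T}_q$, and that $\partial_{x_i}$ differentiates only the $i$th factor, namely $\partial_{x_i}\widehat{\mathbb T}_p(x)=\widehat{\mathbb T}'_{p_i}(x_i)\prod_{j\ne i}\widehat{\mathbb T}_{p_j}(x_j)$. Hence each term factorizes, with the $i$th coordinate contributing the derivative inner product and all other coordinates contributing $L^2$ inner products; applying both parts of \eqref{bi-orth-eqn} gives
\begin{equation*}
(\partial_{x_i}\widehat{\mathbb T}_p,\partial_{x_i}\widehat{\mathbb T}_q)_{L^2(\mathbb{R}^d)}
=(\widehat{\mathbb T}'_{p_i},\widehat{\mathbb T}'_{q_i})_{L^2(\mathbb{R})}\prod_{j\ne i}(\widehat{\mathbb T}_{p_j},\widehat{\mathbb T}_{q_j})_{L^2(\mathbb{R})}
=\lambda_{p_i}\delta_{p_iq_i}\prod_{j\ne i}\delta_{p_jq_j}=\lambda_{p_i}\delta_{pq}.
\end{equation*}
Summing over $i=1,\dots,d$ and recalling the definition $|\lambda_p|_1=\lambda_{p_1}+\cdots+\lambda_{p_d}$ in \eqref{pqnotes} then yields $\big(\nabla \widehat {\mathbb T}_p, \nabla \widehat {\mathbb T}_q\big)_{L^2(\mathbb{R}^d)}=|\lambda_p|_1\,\delta_{pq}$.

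This argument is essentially bookkeeping, so there is no genuine obstacle; the only point requiring care is the gradient case, where one must remember that the inner product is a sum over the $d$ coordinate directions and that in the $i$th summand the factor $\delta_{p_iq_i}$ arises from the derivative pairing while the remaining $\delta_{p_jq_j}$ arise from the $L^2$ pairings. Together these collapse to the full $\delta_{pq}$ multiplied by $\lambda_{p_i}$, which is exactly why the eigenvalues add up in the diagonal entry $|\lambda_p|_1$.
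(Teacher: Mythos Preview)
Your proof is correct and follows essentially the same approach as the paper: factor the $d$-dimensional inner products via Fubini, apply the one-dimensional bi-orthogonality \eqref{bi-orth-eqn} from Lemma~\ref{partial-diagonal} in each coordinate, and for the gradient term sum the $d$ resulting contributions $\lambda_{p_i}\delta_{pq}$ to obtain $|\lambda_p|_1\delta_{pq}$.
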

\begin{proof} One verifies by using the orthogonality  \eqref{bi-orth-eqn} and the definition \eqref{mathTT} that
\begin{equation*}\label{multibas1}
\begin{split}
( \widehat {\mathbb T}_p, \widehat {\mathbb T}_q)_{L^2(\mathbb{R}^d)}=( \widehat {\mathbb T}_{p_1}, \widehat {\mathbb T}_{q_1})_{L^2(\mathbb{R})}\cdots( \widehat {\mathbb T}_{p_d}, \widehat {\mathbb T}_{q_d})_{L^2(\mathbb{R})}=\delta_{p_1q_1}\cdots\delta_{p_dq_d}=\delta_{pq},
\end{split}\end{equation*}
and
% Thanks to \eqref{newbasmtrix}, one verifies easily that
\begin{equation*}\label{multibas2}
\begin{split}
(\nabla  \widehat {\mathbb T}_p,\nabla \widehat {\mathbb T}_q)_{L^2(\mathbb{R}^d)} & =\big\{( \widehat {\mathbb T}_{p_1}^\prime, \widehat {\mathbb T}_{q_1}^\prime)_{L^2(\mathbb{R})}
( \widehat {\mathbb T}_{p_2}, \widehat {\mathbb T}_{q_2})_{L^2(\mathbb{R})} \cdots( \widehat {\mathbb T}_{p_d}, \widehat {\mathbb T}_{q_d})_{L^2(\mathbb{R})}\big\}
\\&\quad +\big\{( \widehat {\mathbb T}_{p_1}, \widehat {\mathbb T}_{q_1})_{L^2(\mathbb{R})}( \widehat {\mathbb T}_{p_2}^\prime, \widehat {\mathbb T}^\prime_{q_2})_{L^2(\mathbb{R})}\cdots( \widehat {\mathbb T}_{p_d}, \widehat {\mathbb T}_{q_d})_{L^2(\mathbb{R})}\big\}
\\&\quad+
\cdots +\big\{( \widehat {\mathbb T}_{p_1}, \widehat {\mathbb T}_{q_1})_{L^2(\mathbb{R})}\cdots( \widehat {\mathbb T}_{p_{d-1}}, \widehat {\mathbb T}_{q_{d-1}})_{L^2(\mathbb{R})} ( \widehat {\mathbb T}_{p_d}^\prime, \widehat {\mathbb T}^\prime_{q_d})_{L^2(\mathbb{R})}\big\}
\\& =\lambda_{p_1} \delta_{p_1q_1}\cdots\delta_{p_dq_d} +\lambda_{p_2} \delta_{p_1q_1}\cdots\delta_{p_dq_d} +\cdots+\lambda_{p_d} \delta_{p_1q_1}\cdots\delta_{p_dq_d} \\& =(\lambda_{p_1}+\cdots+\lambda_{p_d})\delta_{pq}=|\lambda_p|_1\delta_{pq}.
\end{split}\end{equation*}
This ends the proof.
\end{proof}
%\begin{remark}
%Moreover, we can rewrite the stiffness matrix of \eqref{pqN} in the following form using the tensor product notation:
%\begin{equation}
%\label{stifmatrmul}
% \bs{\Sigma}\otimes \bs{I}\otimes\cdots\otimes\bs{I}+\bs{I}\otimes\bs{\Sigma}\otimes \bs{I}\otimes\cdots\otimes\bs{I}+\cdots+\bs{I}\otimes\cdots\otimes\bs{I}\otimes\bs{\Sigma},
%\end{equation}
%where $\bs{\Sigma}$ of order $N+1$ defined in \eqref{eige} and $\otimes$ denotes the tensor product operator, i.e., $A\otimes B=(Ab_{ij})_{i,j=0,\cdots,N}$.
%\end{remark}

Remarkably, the use of the Fourier-like MCF  can diagonalise the integral fractional Laplacian in the Dunford-Taylor formulation.
%Now, we examine the linear system of  the scheme \eqref{auveqn23}-\eqref{bSeqn}.
\begin{thm}\label{newtheorem}  Using the tensorial Fourier-like MCFs as basis functions,
the solution of \eqref{auveqn23}-\eqref{bSeqn} can be uniquely expressed as
\begin{equation}
\label{smsolurd}
u_N(x)=\dsum_{p\in \Upsilon_{\!N}}\frac{\hat f_p}{\gamma+|\lambda_p|_1^s}\, \widehat{\mathbb{T}}_p(x),\quad x\in {\mathbb R^d},
\end{equation}
where $\widehat {\mathbb T}_p, \lambda_p$ are defined in \eqref{mathTT}, and
%,  the linear system of  the scheme \eqref{auveqn23}-\eqref{bSeqn} can be simplified to
%\begin{equation}\label{spectralalgord}
%\hat{u}_q=\frac{\hat f_q}{\gamma+|\lambda_q|_1^s},\qquad   q\in \Upsilon_N,
%\end{equation}
%where
\begin{equation}\label{fppp}
\hat f_p=(I_N^df,\widehat{\mathbb{T}}_p)_{L^2(\mathbb{R}^d)},\quad p\in \Upsilon_{\!N}.
\end{equation}
\end{thm}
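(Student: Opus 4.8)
The plan is to exploit the bi-orthogonality \eqref{pqN} to fully decouple the discrete system \eqref{auveqn23}--\eqref{bSeqn} into a family of scalar equations, one for each index in $\Upsilon_{\!N}$. First I would expand the unknown in the tensorial Fourier-like basis, writing $u_N=\dsum_{p\in\Upsilon_{\!N}}\hat u_p\,\widehat{\mathbb T}_p$, and for each fixed $t>0$ seek $w_N=w_N(u_N,t)=\dsum_{p\in\Upsilon_{\!N}}\hat w_p(t)\,\widehat{\mathbb T}_p$ in the same basis. Since $\{\widehat{\mathbb T}_p\}_{p\in\Upsilon_{\!N}}$ spans $\mathbb V_{\!N}^d$, it suffices to test both equations against $v_N=\psi=\widehat{\mathbb T}_q$ for $q\in\Upsilon_{\!N}$.

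Second, I would resolve the auxiliary problem \eqref{bSeqn}. Substituting the expansion of $w_N$ and testing with $\widehat{\mathbb T}_q$, the two inner products collapse by \eqref{pqN}, namely $(\nabla w_N,\nabla\widehat{\mathbb T}_q)_{L^2(\mathbb R^d)}=|\lambda_q|_1\hat w_q(t)$ and $(w_N,\widehat{\mathbb T}_q)_{L^2(\mathbb R^d)}=\hat w_q(t)$, while the right-hand side gives $(u_N,\widehat{\mathbb T}_q)_{L^2(\mathbb R^d)}=\hat u_q$. Hence the system diagonalises to $(1+t^2|\lambda_q|_1)\hat w_q(t)=\hat u_q$, so that
\[
\hat w_q(t)=\frac{\hat u_q}{1+t^2|\lambda_q|_1}, \qquad
\hat u_q-\hat w_q(t)=\frac{t^2|\lambda_q|_1}{1+t^2|\lambda_q|_1}\,\hat u_q.
\]

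Third --- and this is the crux --- I would insert these coefficients into \eqref{auveqn23} with $v_N=\widehat{\mathbb T}_q$. The right-hand side is $\hat f_q$ by \eqref{fppp}, the mass term gives $\gamma\hat u_q$, and the nonlocal term becomes $C_s\,\hat u_q\,|\lambda_q|_1\dint_0^\infty \frac{t^{1-2s}}{1+t^2|\lambda_q|_1}\,{\rm d}t$ after using $t^{-1-2s}\cdot t^2=t^{1-2s}$. The main computation is this $t$-integral: the scaling $t\mapsto t/\sqrt{|\lambda_q|_1}$ extracts a factor $|\lambda_q|_1^{s-1}$ and reduces it to the standard integral $\dint_0^\infty \frac{\tau^{1-2s}}{1+\tau^2}\,{\rm d}\tau=\frac{\pi}{2\sin(\pi s)}$, whose convergence at both endpoints holds precisely because $s\in(0,1)$. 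Combining with $C_s=2\sin(\pi s)/\pi$, the prefactor $C_s\,|\lambda_q|_1\cdot|\lambda_q|_1^{s-1}\cdot\frac{\pi}{2\sin(\pi s)}$ collapses exactly to $|\lambda_q|_1^s$, so the nonlocal term equals $|\lambda_q|_1^s\hat u_q$. Each scalar equation then reads $(\gamma+|\lambda_q|_1^s)\hat u_q=\hat f_q$, which since $\gamma>0$ and $|\lambda_q|_1^s\ge 0$ has the unique solution $\hat u_q=\hat f_q/(\gamma+|\lambda_q|_1^s)$; assembling over $q\in\Upsilon_{\!N}$ yields \eqref{smsolurd}, and the positivity of the denominators simultaneously delivers existence and uniqueness.

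The main obstacle is the exact evaluation of the $t$-integral and the verification that all constants cancel to produce the clean factor $|\lambda_q|_1^s$; everything else is bookkeeping enabled by the bi-orthogonality. I would also remark that interchanging the $t$-integration with the expansion is immediate here, since the sum over $\Upsilon_{\!N}$ is finite.
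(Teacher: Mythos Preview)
Your proposal is correct and follows essentially the same route as the paper: expand $u_N$ and $w_N$ in the Fourier-like basis, use the bi-orthogonality \eqref{pqN} to diagonalise \eqref{bSeqn} into $\hat w_q(t)=\hat u_q/(1+t^2|\lambda_q|_1)$, substitute into \eqref{auveqn23}, and evaluate the $t$-integral via the same scaling $t\mapsto t/\sqrt{|\lambda_q|_1}$ reducing to $\int_0^\infty \tau^{1-2s}/(1+\tau^2)\,{\rm d}\tau=\pi/(2\sin(\pi s))$. Your added remarks on the positivity of $\gamma+|\lambda_q|_1^s$ and the triviality of interchanging the $t$-integral with the finite sum are welcome but not points of divergence.
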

 \begin{proof}
% We write
%\begin{equation}
%\label{smsolurd}
%u_N(x)=\dsum_{j\in \Upsilon_N}\widetilde{u}_{j}\phi_j(x).
%\end{equation}
Write
 \begin{equation}\label{solufpsird}
u_{N} =\dsum_{p\in\Upsilon_{\!N}}\hat{u}_p\widehat{\mathbb{T}}_p(x),\quad
 w_{N} =\dsum_{p\in\Upsilon_{\!N}}\hat{w}_p\widehat{\mathbb{T}}_p(x),
\end{equation}
where $w_N$ is the unique solution of \eqref{bSeqn} associated with $u_N.$  For clarity, we split the proof into the following steps.

(i).  We first show that
$w_N$ can be uniquely determined by $u_N$ via
\begin{equation}\label{wNexpansion}
w_N=\dsum_{p\in\Upsilon_{\!N}}\frac{\hat{u}_p}{1+t^2|\lambda_p|_1}\widehat{\mathbb{T}}_p(x).
\end{equation}
  Substituting \eqref{solufpsird} into \eqref{bSeqn}, and taking $\psi=\widehat{\mathbb{T}}_q$ in \eqref{bSeqn}, we arrive at
\begin{equation*}\label{bSeqnrd}
\dsum_{p\in\Upsilon_{\!N}}\hat{w}_p\big\{t^{2} (\nabla \widehat{\mathbb{T}}_p,\nabla \widehat{\mathbb{T}}_q)_{L^2(\mathbb{R}^d)}+(\widehat{\mathbb{T}}_p, \widehat{\mathbb{T}}_q)_{L^2(\mathbb{R}^d)}\big\}=\dsum_{p\in\Upsilon_{\!N}}\hat{u}_p(\widehat{\mathbb{T}}_p,  \widehat{\mathbb{T}}_q)_{L^2(\mathbb{R}^d)}, \quad \forall q\in \Upsilon_{\!N}.
\end{equation*}
By the orthogonality \eqref{pqN}, we obtain
\begin{equation*}
\dsum_{p\in\Upsilon_N}\hat{w}_p\big\{t^{2}|\lambda_p|_1+1\big\}\delta_{pq}=\dsum_{p\in\Upsilon_N}\hat{u}_p\delta_{pq}, \quad \forall q\in \Upsilon_{\!N},
\end{equation*}
which implies \eqref{wNexpansion}, as
\begin{equation*}
\label{numesolufracLapl0rd}
\hat{w}_p=\frac{\hat{u}_p}{1+t^2|\lambda_p|_1},\quad \forall p\in \Upsilon_{\!N}.
\end{equation*}

(ii). We next prove the integral identity:
\begin{equation}\label{ytxi0}
\begin{split}
 \dint_0^\infty \frac{ t^{1-2s}\, |\lambda_p|_1^{1-s}}{  1+t^2|\lambda_p|_1}\,{\rm d}t=\frac \pi {2\sin (\pi s)}=\frac 1 {C_s}.
\end{split}
\end{equation}
Indeed, using a change of variable $y=t\sqrt{|\lambda_p|_1},$ we find readily that
\begin{equation*}\label{ytxi1}\begin{split}
 \dint_0^\infty \frac{t^{1-2s}\,|\lambda_p|_1^{1-s}}{  1+t^2|\lambda_p|_1}{\rm d}t
 =\int_0^{\infty} \frac{y^{1-2s}}{1+y^2}{\rm d}y=\frac \pi {2\sin (\pi s)},
\end{split}\end{equation*}
where we used  the  known formula \eqref{form1} below with $\mu=2-2s$ and $\nu=2$.
According to \cite[P. 325, P. 918, P.905]{Gradshteyn2015Book}, we have for $\nu\ge \mu\ge 0$ and $\nu\not=0,$
\begin{equation}\begin{split}\label{form1}
&\int_{0}^{\infty} \frac{x^{\mu-1}}{1+x^{\nu}} {\rm d} x=\frac{1}{\nu} \mathrm{B}\Big(\frac{\mu}{\nu}, 1-\frac{\mu}{\nu}\Big)=\frac{1}{\nu} \Gamma\Big(\frac{\mu}{\nu}\Big) \Gamma\Big(1-\frac{\mu}{\nu}\Big)
=\frac{\pi}{\nu\sin(\pi \mu/\nu)},
\end{split}\end{equation}
where we used the properties of the Beta and Gamma functions:
\begin{equation*}\label{form0}
\begin{split}
\mathrm{B}(x,y)=\frac{\Gamma(x)\Gamma(y)}{\Gamma(x+y)},\quad \Gamma(1-x)\Gamma(x)=\frac{\pi}{\sin(\pi x)}.
\end{split}
\end{equation*}

%On the other hand, we make the change of variable $y=t\sqrt{|\lambda_p|_1}$ and use \eqref{form1} to get that
%\begin{equation}\label{ytxi1}\begin{split}
% \dint_0^\infty t^{1-2s}\frac{|\lambda_p|_1^{1-s}}{  1+t^2|\lambda_p|_1}{\rm d}t=\int_0^{\infty} \frac{y^{1-2s}}{1+y^2}{\rm d}y
% =\frac12 \mathrm{B}(1-s,s)= \frac12\Gamma(1-s)\Gamma(s)=\frac \pi {2\sin \pi s}.
%\end{split}\end{equation}

(iii). Finally, we can derive \eqref{smsolurd} with the aid of   \eqref{wNexpansion}-\eqref{ytxi0}.
It is evident that by \eqref{solufpsird}-\eqref{wNexpansion},
\begin{equation}\label{uNwN}
u_N-w_N= t^2 \dsum_{p\in\Upsilon_{\!N}}\frac{ |\lambda_p|_1}{1+t^2|\lambda_p|_1}\,  \hat{u}_p \widehat{\mathbb{T}}_p(x).
\end{equation}
Thus, substituting   \eqref{uNwN}  into \eqref{auveqn23} with $v_N=\widehat{\mathbb{T}}_q$, we obtain from \eqref{pqN} and \eqref{ytxi0} that
\begin{equation*}
\label{newschemerd}\begin{split}
{\mathcal B}_N(u_N,\widehat{\mathbb{T}}_q)&=\dsum_{p\in\Upsilon_{\!N}}\hat{u}_p\bigg\{C_s \int_0^\infty t^{1-2s} \dint_{\mathbb R^d}\frac{|\lambda_p|_1}{  1+t^2|\lambda_p|_1}\widehat{\mathbb{T}}_p(x)\widehat{\mathbb{T}}_q(x) {\rm d}x\,  {\rm d}t+\gamma(\widehat{\mathbb{T}}_p,\widehat{\mathbb{T}}_q)_{L^2(\mathbb{R}^d)}\bigg\}
\\&=\dsum_{p\in\Upsilon_{\!N}}\hat{u}_p\bigg\{\delta_{pq} \,C_s     \dint_0^\infty \frac{t^{1-2s}\, |\lambda_p|_1}{  1+t^2|\lambda_p|_1}{\rm d}t +\gamma \delta_{pq}\bigg\}
\\&=\dsum_{p\in\Upsilon_{\!N}}\hat{u}_p\big(|\lambda_p|_1^s+\gamma\big)\delta_{pq}=(I_N^df,\widehat{\mathbb{T}}_q)_{L^2(\mathbb{R}^d)},
\end{split}\end{equation*}
which implies
\begin{equation*}\label{spectralalgord}
\hat{u}_p=\frac{(I_N^df,\widehat{\mathbb{T}}_p)_{L^2(\mathbb{R}^d)} }{\gamma+|\lambda_p|_1^s},\quad  \forall p\in \Upsilon_{\!N}.
\end{equation*}
 Thus, we obtain  \eqref{smsolurd}-\eqref{fppp}  immediately.
 \end{proof}
 \begin{remark}\label{newalgo} {\em It is crucial to use the Fourier-like  MCFs as the basis functions
 for both $u_N$ and $w_N,$ so that we can take the advantage of the bi-orthogonality and explicitly evaluate the integration in $t.$
In other words, under the Fourier-like basis, the stiffness matrix of the linear system of \eqref{auveqn23}-\eqref{bSeqn}  becomes a diagonal matrix of the form
\begin{equation}
\label{stifmatrmul2}
 \widehat{\bs{S}}:=\big(\bs{\Sigma}\otimes \bs{I}\otimes\cdots\otimes\bs{I}+\bs{I}\otimes\bs{\Sigma}\otimes \bs{I}\otimes\cdots\otimes\bs{I}+\cdots+\bs{I}\otimes\cdots\otimes\bs{I}\otimes\bs{\Sigma}\big)^s,
\end{equation}
where  $\bs{\Sigma}$ is defined in \eqref{eige} and $\otimes$ denotes the tensor product operator as  before.
 }
\end{remark}
%
%\begin{remark}
%Similarly we can rewrite the stiffness matrix of \eqref{newschemerd} in the following form using the tensor product notation:
%\begin{equation}
%\label{stifmatrmul2}
% \widetilde{\bs{S}}=(\bs{\Sigma}\otimes \bs{I}\otimes\cdots\otimes\bs{I}+\bs{I}\otimes\bs{\Sigma}\otimes \bs{I}\otimes\cdots\otimes\bs{I}+\cdots+\bs{I}\otimes\cdots\otimes\bs{I}\otimes\bs{\Sigma})^s,
%\end{equation}
%where $\bs{\Sigma}$ of order $N+1$ defined in \eqref{eige} and $\otimes$ denotes the tensor product operator.
%\end{remark}
\begin{remark} {\em  The main cost of solving the system \eqref{spectralalgord} is devoted to the evaluation of   the right-hand side, which can be carried out by the fast Fourier transform (FFT) related to Chebyshev polynomials.}
\end{remark}

\section{Error estimates and numerical examples}\label{sect4}

In this section,  we  derive some relevant MCF approximation results, which are useful for  the error estimates of the  proposed MCF spectral-Galerkin scheme.

\subsection{Approximation by MCFs}

We first consider  $d$-dimensional $L^2$-orthogonal  projection: $\pi_N^d : L^2(\mathbb R^d)$ $\to  \mathbb{V}^d_{\!N}$ such that
\begin{equation}
\label{orthproj1}
\dint_{\R^d}(\pi^d_N u-u)(x)v (x) \,{\rm d}x=0,\quad \forall v\in \mathbb{V}^d_{\!N}.
\end{equation}

%In view of \eqref{orthNs22},  it is necessary  to estimate the projection and interpolation errors
%in the $L^2$- and $H^1$-norms.
% Throughout the paper, the pairs
%of functions $(u,\breve u)$ and $(U,\breve U)$ associated with the mapping \eqref{AlgeMapp} have the relations
%\begin{equation}\label{pair}
%\begin{split} u(x) &=U(y(x)), \quad \breve{u}(x)=\frac{u(x)}{g(x)}=\frac{U(y)}{G(y)}=\breve{U}(y),
% \end{split}
%\end{equation}
%where $g$ and $G$ are defined in \eqref{weigfunc}.
We intend to estimate the projection error in the fractional Sobolev norm, i.e.,  $\|\pi^d_N u-u\|_{H^s(\mathbb R^d)}.$ For this purpose, we introduce some notation and spaces of functions.
%To describe the error of MCF approximation, we introduce following differential operator
%\begin{equation*}
% D_xu :=a(x) \frac{d}{dx}\Big(\frac{u(x)}{\mu(x)}\Big) \quad \text { with } \quad a(x)=\frac{d x}{d y}=(1+x^2)^{\frac{3}{2}}, \quad \mu(x)=\frac{1}{ \sqrt{1+x^2}}.
%%D_xu :=(1+x^2)^{\frac{3}{2}}\frac{d}{dx}\Big(\sqrt{1+x^2}u(x)\Big).
%\end{equation*}

For notational convenience, the pairs of functions $(u,\breve u)$ and $(U,\breve U)$ associated with the mapping \eqref{AlgeMapp} have the relations
 \begin{equation}\label{pair}
 \begin{split} u(x) &=U(y(x)), \quad \breve{u}(x)=\frac{u(x)}{g(x)}=\frac{U(y)}{G(y)}=\breve{U}(y),
 \end{split}
\end{equation}
where  as in \eqref{weigfunc}, $g(x)= (1+x^2)^{-1/2}=\sqrt{1-y^2}=G(y).$
Define the differential operators
\begin{equation}\label{Doper}
\begin{split}
&D_{x_j} u :=\partial_{x_j}\big\{(1+x^2_j)^{\frac12}u\big\}\frac{{\rm d}x_j}{{\rm d}y_j}=a(x_j)\partial_{x_j}\big\{(1+x^2_j)^{\frac12}u\big\}=\partial_{y_j}\breve U,
\\& D_{x_j}^{k_j} u=a(x_j)\partial_{x_j}\Big\{a(x_j) \partial_{x_j}\Big\{\cdots\Big\{a(x_j)\partial_{x_j}\Big\{(1+x^2_j)^{\frac12}u\Big\}\Big\} \cdots\Big\}\Big\}=\partial_{y_j}^{k_j}\breve U,\quad
\end{split}
\end{equation}
for $  k_j\ge 1, 1\le j\le d,$ where $a(x_j)=dx_j/dy_j=(1+x^2_j)^{\frac32}$. Correspondingly, we define the $d$-dimensional Sobolev space % as an extension of \eqref{Bspac} and \eqref{Bnorm}:
\begin{equation}
\label{Bspacex}
B^m(\R^d)=\big\{u:D^k_{x}u\in L^2_{\varpi^{k+1}}(\mathbb{R}^d),~0\leq | k |_1\leq m\big\},\quad m=0,1,2,\cdots,
\end{equation}
where the differential operator and the weight function are
\begin{equation}\label{dkxj}
D^{k}_{x}u=D^{k_1}_{x_1}\cdots D^{k_d}_{x_d}u,\quad
\varpi^{k}(x)=\prod_{j=1}^{d}(1+x_j^2)^{-k_j}.
\end{equation}
It is equipped with the norm and semi-norm
\begin{equation}
\label{Bnormex2}\begin{split}
 \|u\|_{B^m(\mathbb{R}^d)}=\Big(\dsum_{0\leq |k|_1\leq m}\big\|D^{k}_{x} u\big\|^2_{L^2_{\varpi^{1+k}}(\mathbb{R}^d)}\Big)^{\frac 12},\quad
  |u|_{B^m(\mathbb{R}^d)}=\Big(\dsum^d_{j=1}\big\|D^m_{x_j} u\big\|^2_{L^2_{\varpi^{1+me_j}}(\mathbb{R}^d)}\Big)^{\frac 12},
\end{split}\end{equation}
%and
%\begin{equation}
%\label{Bnormex}\begin{split}
% |u|_{B^m(\mathbb{R}^d)}=\Big(\dsum^d_{j=1}\big\|D^m_{x_j} u\big\|^2_{L^2_{\varpi^{1+me_j}}(\mathbb{R}^d)}\Big)^{1/2},\quad m=0,1,\cdots,
%\end{split}\end{equation}
 where $e_j=(0,\cdots,1,\cdots,0)$ be the $j$-th unit vector in $\mathbb{R}^d$.

%According to Theorem 3.1 \& 3.2 in \cite{shen2014approximations}, we have
%\begin{lemma}\label{lm3.3} For any $u\in B^m(\R^d)$ with $1\leq m\leq N+1$ and $l=0,1$, we have
%\begin{equation}
%\label{MCFmul1}
%\|\pi_N^du-u\|_{L^2(\R^d)}\leq cN^{-m}|u|_{B^m(\R^d)},
%\end{equation}
%and
%\begin{equation}
%\label{MCFmul2}
%\|\nabla(\pi^d_Nu-u)\|_{L^2(\R^d)}\leq cN^{1-m}|u|_{B^m(\R^d)},
%\end{equation}
%where $c$ is a positive constant independent of $N$, $m$ and $u$.
%\end{lemma}
%\begin{proof}
%The result \eqref{MCFmul1} can be found in Theorem 3.1 in \cite{shen2014approximations}.
%One may follow a similar procedure as for the Theorem 3.2 in \cite{shen2014approximations} to justify \eqref{MCFmul2}.
%\end{proof}

%Finally, by using a standard argument of space interpolation \cite{adams2003sobolev} and Lemma \ref{lm3.3}, we arrive at the following:
%The estimate of the projection operator $\pi^d_N$ is given below.
\begin{thm}\label{th3.1} If $u\in B^m(\mathbb{R}^d)$ with integer $m\geq 1,$ then  we have
\begin{equation}
\label{numerr1}
\|\pi^d_N u-u\|_{H^{s}(\mathbb{R}^d)}\leq
cN^{s-m}|u|_{B^m(\mathbb{R}^d)},\quad 0\le s\le 1,
\end{equation}
where $c$ is a positive constant independent of $N$ and $u.$
\end{thm}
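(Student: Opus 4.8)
The plan is to deduce the fractional-order estimate \eqref{numerr1} from two integer-order ($L^2$ and $H^1$) projection estimates together with the space-interpolation inequality of Lemma \ref{interpola}. Applying \eqref{Hrinterp} with $r_0=0$, $r_1=1$, $r=s$ to the error $e_N:=\pi_N^d u-u$ gives
\begin{equation*}
\|e_N\|_{H^s(\R^d)}\le \|e_N\|_{L^2(\R^d)}^{1-s}\,\|e_N\|_{H^1(\R^d)}^{s}.
\end{equation*}
Hence it suffices to prove the two bounds
\begin{equation*}
\|e_N\|_{L^2(\R^d)}\le cN^{-m}|u|_{B^m(\R^d)},\qquad \|e_N\|_{H^1(\R^d)}\le cN^{1-m}|u|_{B^m(\R^d)},
\end{equation*}
since then $\|e_N\|_{H^s(\R^d)}\le cN^{-m(1-s)}N^{(1-m)s}|u|_{B^m(\R^d)}=cN^{s-m}|u|_{B^m(\R^d)}$, which is exactly \eqref{numerr1}.

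First I would establish the two bounds in one dimension. The mapping \eqref{AlgeMapp} and the pair relation \eqref{pair} set up an isometry between $L^2(\R)$ and the Chebyshev-weighted space $L^2_\omega(\Lambda)$ via $u\leftrightarrow \breve U$, under which $\pi_N$ is carried to the standard Chebyshev $L^2_\omega$-orthogonal projection onto $\mathbb{P}_N$ acting on $\breve U$, because $\mathbb V_{\!N}=g(x)\,\mathbb{P}_N[y]$. The crucial structural identity \eqref{Doper}, namely $D_{x}^{m}u=\partial_y^{m}\breve U$, converts $x$-derivatives into plain $y$-derivatives of $\breve U$, so the weighted $B^m$-seminorm of $u$ equals a Jacobi-weighted norm of $\partial_y^m\breve U$. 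Invoking the classical Chebyshev/Jacobi approximation theory (as in \cite{shen2011spectral,shen2014approximations}) then yields $\|\pi_N u-u\|_{L^2(\R)}\le cN^{-m}\|D_x^m u\|_{L^2_{\varpi^{1+m}}(\R)}$, together with a corresponding derivative estimate $\|(\pi_N u-u)'\|_{L^2(\R)}\le cN^{1-m}\|D_x^m u\|_{L^2_{\varpi^{1+m}}(\R)}$ with one power of $N$ lost.

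Next I would pass to $d$ dimensions using the tensor-product structure \eqref{VnVn}. Writing $\pi_N^d=\pi_N^{(1)}\cdots\pi_N^{(d)}$ as a composition of one-directional projections and using the telescoping identity $\prod_{j=1}^d\pi_N^{(j)}-I=\sum_{k=1}^d\big(\prod_{j<k}\pi_N^{(j)}\big)(\pi_N^{(k)}-I)$, each summand is controlled by applying the 1D estimate in the $k$-th variable and the $L^2$-stability (resp.\ $H^1$-stability) of the remaining lower-dimensional projections. Summing over $k$ reproduces precisely the directional seminorm in \eqref{Bnormex2}, giving $\|e_N\|_{L^2(\R^d)}\le cN^{-m}|u|_{B^m(\R^d)}$ and $\|e_N\|_{H^1(\R^d)}\le cN^{1-m}|u|_{B^m(\R^d)}$; combining these through the displayed interpolation inequality completes the argument.

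The main obstacle is the $H^1$ (derivative) estimate: since $\pi_N^d$ is $L^2$- rather than $H^1$-orthogonal, bounding $\|\nabla e_N\|_{L^2(\R^d)}$ cannot use Galerkin orthogonality and instead requires tracking how $x$-differentiation interacts with the algebraic map, since the derivative does not commute cleanly with the $\breve{\,\cdot\,}$-transform. One must use the Chebyshev derivative recurrences \eqref{recu1} (equivalently the nine-diagonal stiffness structure \eqref{derivOrth}) to absorb the extra factor into the weighted $B^m$-seminorm with only the expected single loss $N^{1-m}$. A secondary technical point is ensuring the tensor telescoping retains the seminorm, rather than the full $B^m$-norm, on the right-hand side, which hinges on the stability of the partial projections in the correct weighted norms.
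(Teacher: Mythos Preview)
Your proposal is correct and follows exactly the paper's approach: apply the interpolation inequality \eqref{orthNs22} of Lemma~\ref{interpola} to $e_N=\pi_N^d u-u$ and feed in the $L^2$- and $H^1$-bounds $\|e_N\|_{L^2}\le cN^{-m}|u|_{B^m}$, $\|\nabla e_N\|_{L^2}\le cN^{1-m}|u|_{B^m}$. The only difference is that the paper does not rederive these two integer-order estimates but simply cites \cite[Theorems~3.1--3.2]{shen2014approximations}, so your 1D-plus-tensor-telescoping sketch is additional detail rather than a different route.
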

 \begin{proof}
 In view of \eqref{orthNs22},  it is necessary  to estimate the projection errors  in the $L^2$- and $H^1$-norms.
 According  to \cite[Theorems 3.1-3.2]{shen2014approximations}, we have
 \begin{equation}
\label{MCFmul1}
\|\pi_N^du-u\|_{L^2(\R^d)}\leq cN^{-m}|u|_{B^m(\R^d)},
\end{equation}
and
\begin{equation}
\label{MCFmul2}
\|\nabla(\pi^d_Nu-u)\|_{L^2(\R^d)}\leq cN^{1-m}|u|_{B^m(\R^d)}.
\end{equation}
Using Lemma \ref{interpola} and  \eqref{MCFmul1}-\eqref{MCFmul2}, we arrive at
\begin{equation*}
\|\pi^d_N u-u\|_{H^{s}(\mathbb{R}^d)}\leq
cN^{s-m}|u|_{B^m(\mathbb{R}^d)},\quad s\in[0,1].
\end{equation*}
This ends the proof.
 \end{proof}

We now turn to the error estimate for the interpolation operator.
Let $\{y_j,\rho_j\}_{j=0}^N$ be the Chebyshev-Gauss quadrature nodes and weights on $\Lambda=(-1,1)$. Denote the mapped nodes and weights by
\begin{equation}\label{mapnodes}
x_j=\frac{y_j}{\sqrt{1-y_j^2}}, \quad \omega_j=\frac{\rho_j}{1+y_j^2}, \quad 0 \leq j \leq N.
\end{equation}
%For notational convenience, the pairs of functions $(u,\breve u)$ and $(U,\breve U)$ associated with the mapping \eqref{AlgeMapp} have the relations
% \begin{equation}\label{pair}
% \begin{split} u(x) &=U(y(x)), \quad \breve{u}(x)=\frac{u(x)}{g(x)}=\frac{U(y)}{G(y)}=\breve{U}(y),
% \end{split}
%\end{equation}
%where  as in \eqref{weigfunc}, $g(x)= (1+x^2)^{-1/2}=\sqrt{1-y^2}=G(y).$
Then by the exactness of the Chebyshev-Gauss quadrature, we have
\begin{equation}
\begin{split}
\int_{\mathbb{R}} u(x) v(x)\, {\rm d} x &=\int_{\Lambda} U(y) V(y)  (1-y^2)^{-\frac32}\, {\rm d} y=\int_{\Lambda}  \breve{U}(y)\breve{V}(y)(1-y^2)^{-\frac12}\, {\rm d}y\\ &=\sum_{j=0}^{N} \breve{U}(y_j)\breve{V}(y_j) \rho_{j}, \quad \forall\,\breve{U}\cdot\breve{V} \in \mathbb{P}_{2N+1}.
 \end{split}
\end{equation}
which, together with \eqref{apprspac}, implies the exactness of quadrature
\begin{equation}\label{quad2}
\int_{\mathbb{R}} u(x) v(x)\, {\rm d} x=\sum_{j=0}^{N} u(x_j) v(x_j) \omega_j, \quad \forall u\cdot v \in \mathbb{V}_{2N+1}.
\end{equation}
We now introduce the one-dimensional interpolation operator $I_N:C(\mathbb{R})\rightarrow\mathbb{V}_{\!N}$ such that
\begin{equation}
I_{N} u(x_j)=u(x_j), \quad 0 \leq j \leq N.
\end{equation}
%%As a consequence of \eqref{OMCFsorth0} and \eqref{quad2}, we have
%%\begin{equation}
%%I_{N} u(x)=\sum_{n=0}^{N} \tilde{u}_{n}  \mathbb{T}_n(x), \quad \text { where } \quad \tilde{u}_{n} = \sum_{j=0}^{N} u(x_j) \mathbb{T}_n(x_j) \omega_{j}.
%%\end{equation}
%The following one-dimensional interpolation estimate can be found in \cite[Theorem 4.4]{tang2019rational}.
%\begin{lmm}\label{thmintd1}
%For any $u \in H^{s}(\mathbb{R}) \cap B^{m}(\mathbb{R})$ with integer $1 \leq m \leq N+1$ and $s \in(0,1)$, we have
%\begin{equation}\label{intep2}
%\|I_Nu-u\|_{H^{s}(\mathbb{R})} \leq c N^{s-m}|u|_{B^{m}(\mathbb{R})},
%\end{equation}
%where $c$ is a positive constant independent of $N$ and $u$.
%\end{lmm}

With a little abuse of notation, we define the $d$-dimensional grids by
$x_j=(x_{j_1}, \cdots, x_{j_d}),\, j\in \Upsilon_{\!N},$
where $\{x_{j_k}\}_{k=1}^d$ are the mapped Chebyshev-Gauss nodes, and the index set  $\Upsilon_{\!N}$ is given in \eqref{UpsilonA}  as before.
We now consider the $d$-dimensional  MCF interpolation: $C(\mathbb R^d)\to {\mathbb V}^d_{\!N},$
\begin{equation}\label{inter2}
I^d_{N} u(x_j)=I_N^{(1)}\circ \cdots \circ I_N^{(d)} u(x_j), \quad  j\in \Upsilon_{\!N},
\end{equation}
where $I_N^{(k)}=I_N, 1\le k\le d,$ is the interpolation along $x_k$-direction.
%Similarly, we have
%\begin{equation}\begin{split}
%I^d_{N} u(x)=\sum_{n\in \Upsilon_N}\tilde{u}_n\mathbb{T}_n(x)=\sum_{n_1=0}^N\cdots \sum_{n_d=0}^N \tilde{u}_{n_1\cdots n_d}  \mathbb{T}_{n_1}(x_1)\cdots \mathbb{T}_{n_d}(x_d),
%\end{split}\end{equation}
%where $\tilde{u}_{n} = \sum_{j\in \Upsilon_N} u(x_j) \mathbb{T}_n(x_j) \omega_{j}$, $n\in \Upsilon_N.$
\begin{comment}
Moreover, we recall the $d$-dimensional Sobolev space $B^m(\R^d)$ (cf. \eqref{Bspacex}) equipped with the following semi-norm
\begin{equation}
\label{Bnormex}\begin{split}
 |u|_{\widetilde B^m(\mathbb{R}^d)}=\Big(\dsum^d_{j=1}\big\|D^{k_j}_{x_j} u\big\|^2_{L^2_{\varpi^{1+k_j}}(\mathbb{R}^d)}\Big)^{1/2},\quad 0\leq |k|_1\leq m.
\end{split}\end{equation}
\end{comment}
% where $e_j=(0,\cdots,1,\cdots,0)$ be the $j$-th unit vector in $\mathbb{R}^d$.

%Then, we have the following $d$-dimensional MCF interpolation approximation result.
In the error analysis, we also need the $L^2$-estimate of the  $d$-dimensional MCF interpolation.
For better description of the error, we introduce a second  semi-norm of   $B^m(\mathbb{R}^d)$ for $m\ge 1$ as follows
%{\color{red}\begin{equation}\label{neweqn}
%\begin{split}
%[[u]]_{B^m(\mathbb R^d)} :=\bigg\{
%\sum_{j=1}^d \big\|D^m_{x_j} u\big\|_{L^2_{\varpi^{1+me_j}}(\mathbb{R}^d)}^2
% +\sum_{j=1}^d  \sum_{k\not =j}\big\|D_{x_k}^{m-1}D_{x_j} u\big\|_{L^d_{\varpi^{1+(m-1)e_k+e_j}}(\mathbb{R}^d)}^2\bigg\}^{\frac 1 2}.
%\end{split}
%\end{equation}}
\begin{equation}\label{neweqn}
\begin{split}
[[u]]_{B^m(\mathbb R^d)} :=\bigg\{|u|_{B^m(\mathbb R^d)}^2
 +\sum_{j=1}^d  \sum_{k\not =j}\big\|D_{x_k}^{m-1}D_{x_j} u\big\|_{L^d_{\varpi^{1+(m-1)e_k+e_j}}(\mathbb{R}^d)}^2\bigg\}^{\frac 1 2},
\end{split}
\end{equation}
where the weight function $\varpi$ and $|u|_{B^m(\mathbb R^d)}$ are defined in \eqref{dkxj} and
\eqref{Bnormex2} as before.

We have the following $L^2$-estimates of the interpolation.
 \begin{thm}\label{th3.2} For $u\in B^m(\mathbb{R}^d)$ with the integer $m\geq 2,$ we have
\begin{equation}
\label{Intererr20}
\|I^d_N u-u\|_{L^2(\mathbb{R}^d)}\leq cN^{-m}[[u]]_{B^m(\mathbb{R}^d)},
\end{equation}
where $c$ is a positive constant independent of $N$ and $u.$
\end{thm}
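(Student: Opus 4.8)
The plan is to reduce the multidimensional bound to a telescoped sum of one-dimensional interpolation errors, exploiting the tensor-product structure \eqref{inter2} of $I_N^d$. The first ingredient I would establish is a one-dimensional estimate of the form
\begin{equation*}
\|(I-I_N)v\|_{L^2(\mathbb{R})}\le c\,N^{-a}\,\big\|D_x^a v\big\|_{L^2_{\varpi^{1+a}}(\mathbb{R})},\qquad a\ge 1,
\end{equation*}
together with a weighted $L^2$-stability bound for $I_N$. Both follow from the correspondence $\breve u=u/g$ with the mapping \eqref{AlgeMapp}: interpolation in $\mathbb V_{\!N}$ at the mapped nodes \eqref{mapnodes} is, in the variable $y$, exactly Chebyshev--Gauss interpolation of $\breve U$, so the classical Chebyshev interpolation bound for $\breve U$ transfers to the displayed inequality after the change of variables, using $\partial_y^a\breve U=D_x^a u$ from \eqref{Doper} and $\rd y=(1+x^2)^{-3/2}\rd x$. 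This is the interpolation analogue of the projection estimates \eqref{MCFmul1}--\eqref{MCFmul2} quoted from \cite{shen2014approximations}.

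Next I would telescope. Writing $P_k=I_N^{(1)}\circ\cdots\circ I_N^{(k)}$ with $P_0=I$ and $P_d=I_N^d$, the identity
\begin{equation*}
u-I_N^d u=\dsum_{k=1}^d P_{k-1}\,(I-I_N^{(k)})u
\end{equation*}
holds, and I would then split each intermediate product $P_{k-1}=\prod_{l<k}I_N^{(l)}$ through $I_N^{(l)}=I+(I_N^{(l)}-I)$ and telescope once more. The contribution in which every factor equals $I$ is $(I-I_N^{(k)})u$; by Fubini and the one-dimensional estimate in the $x_k$-direction this is bounded by $cN^{-m}\|D_{x_k}^m u\|$ in the appropriate weighted norm, producing exactly the pure seminorm $|u|_{B^m(\mathbb R^d)}$.

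The contributions carrying a single nontrivial error factor $(I_N^{(j)}-I)$ alongside $(I-I_N^{(k)})$ are the genuinely new feature. Applying the one-dimensional error estimate of order $m-1$ in one of the two directions and of order $1$ in the other, and using that $D_{x_i}$ commutes with $I_N^{(l)}$ whenever $i\ne l$ (so that, e.g., $D_{x_j}^{m-1}(I-I_N^{(k)})u=(I-I_N^{(k)})D_{x_j}^{m-1}u$), each such term is controlled by $cN^{-(m-1)}N^{-1}=cN^{-m}$ times a seminorm of the type $\|D_{x_k}^{m-1}D_{x_j}u\|$, which is precisely one of the mixed seminorms in $[[u]]_{B^m(\mathbb R^d)}$ recorded in \eqref{neweqn}; the remaining interpolation factors are absorbed by the weighted $L^2$-stability from the first step. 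It is here that the hypothesis $m\ge2$ enters: the order-$(m-1)$ one-dimensional estimate is available only for $m-1\ge1$. This is the essential difference from Theorem \ref{th3.1}, whose proof rests on the $L^2$-contractivity of the orthogonal projection; the interpolation operator is not a contraction, so the factors $(I_N^{(j)}-I)$ cannot simply be discarded and instead generate the mixed-derivative terms that force the richer seminorm $[[u]]_{B^m}$. Summing the pure and mixed contributions over $k$ and over $j\ne k$ yields \eqref{Intererr20}.

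I expect the main obstacle to be the weighted bookkeeping rather than the algebra of the telescoping. Two points need care. First, one must verify the $L^2$-stability of the intermediate interpolation operators in the correct weighted spaces, since Chebyshev--Gauss interpolation is only bounded up to higher-order terms; this is what guarantees that the spectator weights assemble into the exact exponents $\varpi^{1+(m-1)e_k+e_j}$ demanded by \eqref{neweqn}. Second, for $d\ge3$ one must check that retaining only a single error factor per term is legitimate, i.e.\ that terms carrying two or more factors $(I_N^{(j)}-I)$ are either of higher order in $N^{-1}$ or are reorganised by the telescoping so that no derivative beyond the mixed type $D_{x_k}^{m-1}D_{x_j}$ is required. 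Making the balance of derivative orders ($m-1$ against $1$) and the matching weight exponents close simultaneously is the delicate part of the argument.
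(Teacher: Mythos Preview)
Your proposal is correct and follows essentially the same route as the paper: the paper maps to Chebyshev--Gauss interpolation on $\Lambda^d$, uses the same telescoping $(I_N^{C,(1)}-I)\breve U + I_N^{C,(1)}(I_N^{C,(2)}-I)\breve U$ in the $d=2$ case, and handles the second term via the stability bound $\|I_N^C v\|_{L^2_\omega}\le c(\|v\|_{L^2_\omega}+N^{-1}\|(1-y^2)^{1/2}v'\|_{L^2_\omega})$, which is exactly your expansion $I_N^{(1)}=I+(I_N^{(1)}-I)$ combined with the order-one error estimate. The mixed term $N^{-1}\cdot N^{-(m-1)}\|\partial_{y_1}\partial_{y_2}^{m-1}\breve U\|$ then appears just as you predict, and the paper likewise notes that the order of the two interpolants can be swapped to pick up the symmetric mixed derivative; your caveat about $d\ge 3$ is well taken, as the paper simply asserts the extension is ``straightforward'' without spelling out how terms with more than one stability correction are absorbed into $[[u]]_{B^m(\mathbb R^d)}$.
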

\begin{proof}  Let  $I^C_{N}: C(\Lambda)\to  \mathbb{P}_{N}$ be the  Chebyshev-Gauss interpolation operator.  According to  \cite[Lemma 3.6]{shen2011spectral}, we have that for any $v\in L^2_{\omega}(\Lambda)$ and $v'\in L^2_{\omega^{-1}}(\Lambda)$ with $\omega(y)=(1-y^2)^{-\frac12}$,
\begin{equation}\label{stab1}
\|I_{N}^C v\|_{L^2_{\omega}(\Lambda)} \leq c\big(\|v\|_{L^2_{\omega}(\Lambda)}+N^{-1}\|(1-y^2)^{\frac12} v'\|_{L^2_{\omega}(\Lambda)}\big),
\end{equation}
where $c$ is a positive  constant independent of $N$ and $v.$ Moreover,  by  \cite[Theorem 3.41]{shen2011spectral},  we have the one-dimensional Chebyshev-Gauss interpolation error estimates,
\begin{equation}\label{estmate1}
\|(I_{N}^C v-v)'\|_{L^2_{\omega^{-1}}(\Lambda)} +N \|I_{N}^C v-v\|_{L^2_{\omega}(\Lambda)}
\leq cN^{1-m}\|(1-y^2)^{\frac{m}{2}} v^{(m)}\|_{L^2_{\omega}(\Lambda)}.
\end{equation}

%
%
%
%so we have
%\begin{equation}\label{interc2}
%I^{C}_{N_j} \Big\{\frac{U(y_p)}{G(y_p)}\Big\}= \frac{U(y_p)}{G(y_p)}, \quad  0\leq p\in N_j,\quad j=1,2,\cdots,d.
%\end{equation}

In view of  \eqref{AlgeMapp}, we have
\begin{equation}
 \|I_{N}^d u-u\|_{L^{2}(\mathbb{R}^{d})}=\big\|I_{N}^{C,d}(U/G)-(U/G)\big\|_{L_{\omega}^{2}(\Lambda^{d})}=\big\|I_{N}^{C,d}\breve U-\breve U\big\|_{L_{\omega}^{2}(\Lambda^{d})},
\end{equation}
where $G(y)=\prod_{j=1}^{d}G(y_j)$ and $I_{N}^{C,d}:=I_{N}^{C,(1)}\circ\cdots \circ
 I_{N_d}^{C,(d)}$ with $I_N^{C,(k)}=I_N^C.$

 For clarity, we only prove the results with $d=2,$ as it is straightforward to extend the results to  the case with $d\ge 3.$
By virtue of the triangle inequality,  \eqref{stab1} and \eqref{estmate1}, we obtain that for $m\ge 2,$
\begin{equation}\label{proofL1x}
\begin{split}
\| I^{C,2}_N\breve U  - \breve U\|_{L^2_\omega(\Lambda^2)}& \leq \|I^{C,(1)}_{N} \breve U-\breve U\|_{L^2_\omega(\Lambda^2)}+\big\|I^{C,(1)}_{N}\circ\big(I^{C,(2)}_{N}\breve U-\breve U\big)\big\|_{L^2_\omega(\Lambda^2)}
\\&\leq c N^{-m}  \|(1-y^2_1)^{\frac{m}{2}} \partial^m_{y_1} \breve U\|_{L^2_{\omega}(\Lambda^2)} + c\Big\{\|I^{C,(2)}_{N} \breve U-\breve U\|_{L^2_\omega(\Lambda^2)}
\\&\quad +N^{-1}\big\|(1-y_1^2)^{\frac12}\partial_{y_1}\big(I^{C,(2)}_{N}\breve U-\breve U\big)\big\|_{L^2_\omega(\Lambda^2)} \Big\}
\\&\le cN^{-m} \Big\{ \big\|(1-y^2_1)^{\frac{m}{2}} \partial^m_{y_1}\breve U\big\|_{L^2_{\omega}(\Lambda^2)} +
\big\|(1-y^2_2)^{\frac{m}{2}} \partial^m_{y_2}\breve U\big\|_{L^2_{\omega}(\Lambda^2)}\\
& \quad + \big\|(1-y_1^2)^{\frac12}(1-y_2^2)^{\frac{m-1}{2}} \partial_{y_1}\partial_{y_2}^{m-1}\breve U\big\|_{L^2_\omega(\Lambda^2)}  \Big\}.
\end{split}
\end{equation}
Note that in the above derivation, we can switch the order of $I_N^{C,(1)}$ and $I_N^{C,(2)},$ so  we can add the term
$\big\|(1-y_1^2)^{\frac{m-1}2}(1-y_2^2)^{\frac{1}{2}} \partial_{y_1}^{m-1}\partial_{y_2}\breve U\big\|_{L^2_\omega(\Lambda^2)} $ in the upper bound.
%in the last term we can switch  $y_1$ and $y_2$ accordingly.
Then, we  derive from  \eqref{pair},  \eqref{Doper} and \eqref{proofL1x} that for $m\ge 2,$
\begin{equation*}\label{proofL1r}
\begin{split}
&\| I^{C,2}_N \breve U -\breve U\|_{L^2_\omega(\Lambda^2)}
\\& \le cN^{-m} \Big\{ \big\|(1+x^2_1)^{-\frac{m+1}{2}} (1+x^2_2)^{-\frac{1}{2}}D^m_{x_1} u\big\|_{L^2(\mathbb{R}^2)} +
\big\|(1+x^2_1)^{-\frac12}(1+x^2_2)^{-\frac{m+1}{2}}D^m_{x_2} u\big\|_{L^2(\mathbb{R}^2)}\\
& \quad +\big\|(1+x_1^2)^{-1}(1+x_2^2)^{-\frac{m}{2}}D_{x_1}D_{x_2}^{m-1} u\big\|_{L^2(\mathbb{R}^2)}
\\&
\quad  +\big\|(1+x_1^2)^{-\frac{m}{2}}(1+x_2^2)^{-1}D_{x_1}^{m-1}D_{x_2} u\big\|_{L^2(\mathbb{R}^2)} \Big\}\le cN^{-m} [[u]]_{B^m(\mathbb R^2)}.
\end{split}
\end{equation*}
It is straightforward to extend the above derivation to $d\ge 3.$ This completes the proof.
\end{proof}

To conduct error analysis  for the proposed  MCF scheme,  we  assume that the error for solving the elliptic problem \eqref{bSeqn}
is negligible (or equivalently, the quadrature errors in evaluating the fractional Laplacian in the scheme can be ignored), so formally, we have $w_N= (\mathbb I-t^2 \Delta)^{-1} u_N. $
It is noteworthy that the analysis of such an error is feasible for the finite element approximation of the fractional Laplacian in bounded domain  based on the Dunford-Taylor formulation in a bounded domain,  though the proof is  lengthy and much involved (see  \cite{bonito2019numerical}). However, the
analysis is largely open in this situation,   mainly because  the spectrum estimate of the fractional Laplacian operator in $\mathbb R^d$ appears unavailable, as opposite to the bounded domain case (see  \cite{bonito2019numerical}).
\begin{prop}\label{th3.3w}  Assume that
the elliptic problem \eqref{bSeqn}  in the scheme \eqref{auveqn23} can be solved exactly.  Then we have the estimate: for $u\in B^{m_1}(\mathbb{R}^d)$ with integer $m_1\geq1,$ and $f\in B^{m_2}(\mathbb{R}^d)$ with integer $m_2\geq2,$
%
%Let $s\in(0,1)$, and $u$ and $u_N$ be the solution of problem \eqref{dDsLap} and \eqref{auveqn23} respectively.
%Assume $u\in B^{m_1}(\mathbb{R}^d)$ with $m_1\geq0,$ and $f\in B^{m_2}(\mathbb{R}^d)$ with $m_2\geq0,$ we have
\begin{equation}
\label{Hermerr1w}\begin{split}
\|u-u_N\|_{H^{s}(\mathbb{R}^d)}\leq& cN^{s-m_1}|u|_{B^{m_1}(\mathbb{R}^d)}+cN^{-m_2}[[f]]_{B^{m_2}(\mathbb{R}^d)},\quad  s\in (0,1),
\end{split}\end{equation}
where $c$ is a positive constant independent of $u,f$ and $N.$
\end{prop}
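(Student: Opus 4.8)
The plan is to exploit the standing assumption that the inner elliptic problem \eqref{bSeqn} is solved exactly, which collapses the scheme into a conforming Galerkin method with an inconsistent right-hand side. Indeed, if $w_N=(\mathbb I-t^2\Delta)^{-1}u_N$ holds exactly, then comparing \eqref{auveqn2} with \eqref{auveqn23} shows that $\mathcal B_N(u_N,v_N)=\mathcal B(u_N,v_N)$ for all $u_N,v_N\in\mathbb V_{\!N}^d$. Since $\mathbb V_{\!N}^d\subset H^s(\mathbb R^d)$ is a conforming subspace, the discrete problem becomes: find $u_N\in\mathbb V_{\!N}^d$ with $\mathcal B(u_N,v_N)=(I_N^d f,v_N)_{L^2(\mathbb R^d)}$ for all $v_N$. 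Subtracting this from the continuous weak form \eqref{uvsh} tested against $v_N$ yields the defect Galerkin orthogonality
\[
\mathcal B(u-u_N,v_N)=(f-I_N^df,v_N)_{L^2(\mathbb R^d)},\qquad \forall\,v_N\in\mathbb V_{\!N}^d,
\]
which isolates the consistency error caused by interpolating the source.

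Next I would run a C\'ea-type argument adapted to this inconsistent right-hand side. Writing $u-u_N=(u-\pi_N^d u)+e_N$ with $e_N=\pi_N^d u-u_N\in\mathbb V_{\!N}^d$, the coercivity in \eqref{contcoer} gives $\|u-u_N\|_{H^s(\mathbb R^d)}^2\lesssim\mathcal B(u-u_N,u-u_N)$. I would split the right-hand side, bound the continuous part by the continuity in \eqref{contcoer} as $\|u-u_N\|_{H^s}\|u-\pi_N^d u\|_{H^s}$, and control the defect part through the orthogonality above together with the duality bound $(f-I_N^df,e_N)\le\|f-I_N^df\|_{H^{-s}}\|e_N\|_{H^s}$, using $\|e_N\|_{H^s}\le\|u-\pi_N^d u\|_{H^s}+\|u-u_N\|_{H^s}$. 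A Young's inequality then absorbs the $\|u-u_N\|_{H^s}$ factors to the left, producing the quasi-optimal estimate
\[
\|u-u_N\|_{H^s(\mathbb R^d)}\lesssim\|u-\pi_N^d u\|_{H^s(\mathbb R^d)}+\|f-I_N^df\|_{H^{-s}(\mathbb R^d)}.
\]

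Finally I would insert the two approximation results already established. Theorem \ref{th3.1} bounds the projection term by $cN^{s-m_1}|u|_{B^{m_1}(\mathbb R^d)}$. For the consistency term, I would use the embedding $\|\cdot\|_{H^{-s}(\mathbb R^d)}\le\|\cdot\|_{L^2(\mathbb R^d)}$ (valid since $s\ge0$) together with the $L^2$-interpolation estimate of Theorem \ref{th3.2}, namely $\|f-I_N^df\|_{L^2(\mathbb R^d)}\le cN^{-m_2}[[f]]_{B^{m_2}(\mathbb R^d)}$, which is where the hypothesis $m_2\ge2$ is needed. Combining the two bounds gives exactly \eqref{Hermerr1w}.

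The step I expect to carry the real weight is none of these—each is a routine perturbed-Galerkin manipulation—but rather the legitimacy of the exactness hypothesis itself. As the discussion preceding the statement stresses, the genuine error from the quadrature in $t$ and from the discrete elliptic solve is left uncontrolled here, and its rigorous analysis appears open because the spectral information for $(-\Delta)^s$ on $\mathbb R^d$ is unavailable, unlike the bounded-domain setting of \cite{bonito2019numerical}. Thus the main obstacle is conceptual: the hypothesis is precisely what decouples the tractable Galerkin estimate from the genuinely hard quadrature error.
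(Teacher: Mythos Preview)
Your proposal is correct and follows essentially the same route as the paper: the paper observes that under the exactness hypothesis the scheme reduces to $\mathcal B(u_N,v_N)=(I_N^d f,v_N)$, invokes a ``standard argument'' to get $\|u-u_N\|_{H^s}\le c(\|\pi_N^d u-u\|_{H^s}+\|I_N^d f-f\|_{L^2})$, and then applies Theorems~\ref{th3.1} and~\ref{th3.2}. Your C\'ea/Strang-type derivation simply spells out that standard argument in detail.
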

\begin{proof}  Under this assumption,  we find from Lemma \ref{DTthm} that
 the scheme  \eqref{auveqn23} can be written as: find  $u_N\in \mathbb V_{\!N}^d$ such that
 $$
 \mathcal B(u_N, v_N)=(I_N^d f, v_N),\quad \forall\, v_N\in \mathbb V_{\!N}^d.
 $$
  Then by \eqref{uvsh} and \eqref{contcoer}, we infer from a standard argument that
  \begin{equation*}
  \| u-u_N\|_{H^s(\mathbb R^d)}\le c(\|\pi_N^d u-u\|_{H^s(\mathbb R^d)} +\|I_N^d f-f\|_{L^2(\mathbb R^d)}).
  \end{equation*}
  Thus, the estimate  \eqref{Hermerr1w} follows from   Theorems \ref{th3.1} and \ref{th3.2}  immediately.
    \end{proof}
    \begin{remark}  {\em In what follows, we shall validate the above assumption  through  several numerical  tests. Indeed, we shall observe that the errors of solving  \eqref{bSeqn} are insignificant, and the order of the numerical errors perfectly agrees with the estimated order. }
    \end{remark}

%\subsection{Numerical results}

\subsection{Useful  analytic formulas}  We first derive  analytical formulas for fractional Laplacian  of
some functions with typical exponential or algebraic decay,  upon which we construct  the exact solutions  to  test the accuracy of the proposed method,  and to validate the assumption in Proposition \ref{th3.3w}. Moreover, we reveal that the fractional Laplacian has a very
different property from the usual Laplacian. For example, the image of an exponential function  decays  algebraically  (see Proposition  \ref{case1:expo} below), as opposite to the usual one.

%We first  analytically study  the asymptotic behaviours  of the
%fractional Laplacian of some functions with typical decay rate at infinity, which turn out to be  very different from that of the usual Laplacian.  With this study, we can better understand the convergence behaviour of our proposed method and also the Hermite approximation in
%\cite{mao2017hermite}.

We have the following exact formulas for the Gaussian function  and rational function, whose derivations are sketched in Appendix \ref{AppendixA} and Appendix \ref{AppendixB}, respectively.  %Here, we
%  provide the derivation of the formulas for the Gaussian function    in Appendix \ref{AppendixA}.      In fact, we omit that for the rational function, as it can be derived in a similar fashion (cf. \cite{tang2019rational} for the 1D case).
\begin{prop}\label{case1:expo} For real $s>0$ and integer $d\geq 1$, we have that
  \begin{equation}\label{2Dcase}
(-\Delta)^s \big\{e^{-|x|^2}\big\}
 =\frac{ 2^{2s} \Gamma(s+d/2)}{\Gamma(d/2)}\, {}_{1} F_{1}\Big(s+\frac{d}{2};\frac{d}{2};-|x|^2\Big).
 \end{equation}
 Moreover for non-integer $s>0$ and $|x|\to\infty$, we have the asymptotic behaviour
 \begin{equation}\label{asymexpo}
 (-\Delta)^s \big\{e^{-|x|^2}\big\}=-\frac{2^{2s}\sin(\pi s)}{\pi}\frac{\Gamma(s+d/2)\Gamma(1+s)}{|x|^{d+2s}}\big\{1+O(|x|^{-2})\big\}.
 \end{equation}
\end{prop}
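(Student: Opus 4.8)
The plan is to compute the fractional Laplacian directly from its Fourier definition \eqref{Ftransform}, using that $e^{-|x|^2}\in\mathscr S(\mathbb R^d)$ and that both the Gaussian and the multiplier $|\xi|^{2s}$ are radial. First I would record the Fourier transform of the Gaussian; with the normalisation adopted in the paper,
\begin{equation*}
\mathscr F\big[e^{-|x|^2}\big](\xi)=2^{-d/2}\,e^{-|\xi|^2/4},
\end{equation*}
which follows from the elementary Gaussian integral $\int_{\mathbb R^d}e^{-|x|^2}e^{-\ri\xi\cdot x}{\rm d}x=\pi^{d/2}e^{-|\xi|^2/4}$. Multiplying by $|\xi|^{2s}$ and applying $\mathscr F^{-1}$ then gives
\begin{equation*}
(-\Delta)^s e^{-|x|^2}=2^{-d/2}\,\mathscr F^{-1}\big[|\xi|^{2s}e^{-|\xi|^2/4}\big](x),
\end{equation*}
so the whole computation reduces to evaluating the inverse transform of an explicit radial function.

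Since the integrand is radial, I would reduce the $d$-dimensional inverse transform to a one-dimensional Hankel transform. Writing $\rho=|\xi|$ and using the standard reduction
\begin{equation*}
\mathscr F^{-1}\big[\phi(|\xi|)\big](x)=|x|^{-(d/2-1)}\dint_0^\infty \phi(\rho)\,\rho^{d/2}J_{d/2-1}(\rho|x|)\,{\rm d}\rho,
\end{equation*}
with $\phi(\rho)=2^{-d/2}\rho^{2s}e^{-\rho^2/4}$, the task becomes the single integral $\int_0^\infty \rho^{2s+d/2}e^{-\rho^2/4}J_{d/2-1}(\rho|x|)\,{\rm d}\rho$. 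This is a classical Weber--Schafheitlin type integral; invoking the tabulated formula
\begin{equation*}
\dint_0^\infty \rho^{\mu-1}e^{-\alpha\rho^2}J_\nu(\beta\rho)\,{\rm d}\rho
=\frac{\beta^\nu\,\Gamma(\frac{\mu+\nu}{2})}{2^{\nu+1}\alpha^{(\mu+\nu)/2}\Gamma(\nu+1)}\,{}_{1}F_{1}\Big(\frac{\mu+\nu}{2};\nu+1;-\frac{\beta^2}{4\alpha}\Big)
\end{equation*}
from \cite{Gradshteyn2015Book} with $\mu=2s+d/2+1$, $\nu=d/2-1$, $\alpha=1/4$, $\beta=|x|$, I would get $\frac{\mu+\nu}{2}=s+d/2$, $\nu+1=d/2$, and $-\beta^2/(4\alpha)=-|x|^2$. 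After collecting the powers of $2$ and observing that the factor $|x|^{d/2-1}$ cancels against the prefactor $|x|^{-(d/2-1)}$, the closed form \eqref{2Dcase} falls out directly.

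For the asymptotic statement \eqref{asymexpo} I would feed the large-argument expansion of Kummer's function into \eqref{2Dcase}. Setting $z=-|x|^2\to-\infty$, the standard asymptotics
\begin{equation*}
{}_{1}F_{1}(a;b;z)=\frac{\Gamma(b)}{\Gamma(b-a)}(-z)^{-a}\big\{1+O(|z|^{-1})\big\},\quad z\to-\infty,
\end{equation*}
with $a=s+d/2$ and $b=d/2$ gives $b-a=-s$ and $(-z)^{-a}=|x|^{-d-2s}$, so
\begin{equation*}
(-\Delta)^s e^{-|x|^2}=\frac{2^{2s}\Gamma(s+d/2)}{\Gamma(-s)}\,|x|^{-d-2s}\big\{1+O(|x|^{-2})\big\}.
\end{equation*}
Finally, the reflection formula $\Gamma(-s)\Gamma(1+s)=-\pi/\sin(\pi s)$ turns $1/\Gamma(-s)$ into $-\sin(\pi s)\Gamma(1+s)/\pi$, yielding exactly the constant in \eqref{asymexpo}.

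The main subtlety, and the reason the asymptotic is stated only for non-integer $s$, lies precisely in the factor $1/\Gamma(b-a)=1/\Gamma(-s)$: when $s\in\mathbb N$ this vanishes, the leading algebraic term disappears, and the genuine decay is governed by the exponentially small remainder $\sim e^{z}z^{a-b}$, consistent with $(-\Delta)^s$ then reducing to an integer power of $-\Delta$ whose action on a Gaussian is merely a polynomial multiple of $e^{-|x|^2}$. I would therefore insist on $s\notin\mathbb N$ so that $\Gamma(-s)$ is finite and nonzero. Beyond this, the only real work is citing the Bessel integral correctly (with $\nu=d/2-1>-1$, valid for every $d\ge1$) and tracking the $O(|x|^{-2})$ error in the confluent-hypergeometric expansion; both are routine once the identifications above are fixed.
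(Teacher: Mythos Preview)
Your proposal is correct and follows essentially the same route as the paper: compute $\mathscr F[e^{-|x|^2}]$, reduce the inverse transform of the radial function $|\xi|^{2s}e^{-|\xi|^2/4}$ to a Hankel integral, evaluate it via the Gradshteyn--Ryzhik formula to obtain the ${}_1F_1$ expression, and then read off the asymptotics from the standard large-argument expansion of Kummer's function together with the reflection formula. The only cosmetic difference is that the paper rederives the Bochner reduction $\mathscr F^{-1}[\phi(|\xi|)](x)=|x|^{1-d/2}\int_0^\infty\phi(\rho)\rho^{d/2}J_{d/2-1}(\rho|x|)\,{\rm d}\rho$ by hand---passing to spherical coordinates and iterating the Bessel identity $\int_0^{\pi/2}J_\nu(a\sin\theta)J_\mu(b\cos\theta)\sin^{\nu+1}\theta\cos^{\mu+1}\theta\,{\rm d}\theta=a^\nu b^\mu(a^2+b^2)^{-(\nu+\mu+1)/2}J_{\nu+\mu+1}(\sqrt{a^2+b^2})$---whereas you invoke it directly; the subsequent Weber--Schafheitlin evaluation and the asymptotic step are identical.
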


% =(2\beta)^{2s}\frac{ \Gamma(s+d/2)}{\Gamma(d/2)}{}_{1} F_{1}\left(s+\frac{d}{2};\frac{d}{2};-|\beta x|^2\right)
%  \sim
% %\left\{\begin{array}{ll}e^{-(x^2+y^2)}, & {\text { if } s=0},
% \begin{cases}
%  |\beta x|^{-(2s+1)}, & {\text { if } s\in (0,1)},
%  \\ |\beta x|^2e^{-|\beta x|^2}, & {\text { if } s=1}.
%  \end{cases}

\begin{prop}\label{case1:alg}
For real $s,r>0,$ and integer $d\geq 1$, we have
\begin{equation}\label{FLalge}
(-\Delta)^{s}\Big\{\frac{1}{(1+|x|^2)^{r}}\Big\}=\frac{2^{2s}\Gamma(s+\gamma)\Gamma(s+d/2)}{\Gamma(\gamma)\Gamma(d/2)}{}_{2}F_{1}\Big(s+r,s+\frac{d}{2};\frac{d}{2};-|x|^2\Big).
\end{equation}
Moreover for non-integer $s>0$ and $|x|\to\infty$, we have the asymptotic properties: for  $r\not=d/2,$
\begin{equation}\label{gmaao}
(-\Delta)^{s}\Big\{\frac{1}{\left(1+|x|^2\right)^{r}}\Big\}\sim \frac{1} {(1+|x|^2)^{s+\mu}},
\end{equation}
where $\mu=\min\{r, d/2\}.$ if $r=d/2,$ then
\begin{equation}\label{neweqnAln}
(-\Delta)^{s}\Big\{\frac{1}{(1+|x|^2)^r}\Big\}\sim \frac{\ln (1+|x|^2)} {(1+|x|^2)^{s+d/2}};
\end{equation}
%
%\begin{itemize}
%\item[(i)] if $0<r<d/2,$ then
%\begin{equation}\label{gmaao}
%(-\Delta)^{s}\Big\{\frac{1}{\left(1+|x|^2\right)^{r}}\Big\}\sim \frac{1} {(1+|x|^2)^{s+r}};
%\end{equation}
%\item[(ii)] if $r=d/2,$ then
%\begin{equation}\label{neweqnAln}
%(-\Delta)^{s}\Big\{\frac{1}{(1+|x|^2)^r}\Big\}\sim \frac{\ln (1+|x|^2)} {(1+|x|^2)^{s+d/2}};
%\end{equation}
%\item[(iii)] if $r>d/2,$ then
%\begin{equation}\label{neweqnBln}
%(-\Delta)^{s}\Big\{\frac{1}{(1+|x|^2)^{r}}\Big\}\sim \frac{1} {(1+|x|^2)^{s+d/2}}.
%\end{equation}
%\end{itemize}
\end{prop}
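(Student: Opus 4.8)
The plan is to reduce the exact identity \eqref{FLalge} to the Gaussian computation of Proposition \ref{case1:expo} via the Gamma-function representation
\begin{equation*}
\frac{1}{(1+|x|^2)^{r}} = \frac{1}{\Gamma(r)}\int_0^\infty \tau^{r-1}\, e^{-\tau}\, e^{-\tau|x|^2}\,{\rm d}\tau,\qquad r>0,
\end{equation*}
which rewrites the rational function as a continuous superposition of Gaussians whose fractional Laplacian is already known. After interchanging $(-\Delta)^s$ with the $\tau$-integral, what remains is a single Laplace-type integral of a confluent hypergeometric function.

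First I would invoke the scaling law $(-\Delta)^s[f(c\,\cdot)](x)=c^{2s}[(-\Delta)^s f](cx)$ with $c=\sqrt{\tau}$ and $f=e^{-|\cdot|^2}$, so that \eqref{2Dcase} gives
\begin{equation*}
(-\Delta)^s e^{-\tau|x|^2} = \tau^{s}\,\frac{2^{2s}\Gamma(s+d/2)}{\Gamma(d/2)}\,{}_1F_1\Big(s+\tfrac{d}{2};\tfrac{d}{2};-\tau|x|^2\Big).
\end{equation*}
Substituting this into the $\tau$-integral and invoking the classical Laplace-transform identity $\int_0^\infty e^{-\tau}\tau^{\rho-1}\,{}_1F_1(a;b;-z\tau)\,{\rm d}\tau=\Gamma(\rho)\,{}_2F_1(a,\rho;b;-z)$ (cf. \cite{Gradshteyn2015Book}) with $\rho=r+s$, $a=s+d/2$, $b=d/2$ and $z=|x|^2$ produces exactly \eqref{FLalge}, after using the symmetry ${}_2F_1(a,b;c;\cdot)={}_2F_1(b,a;c;\cdot)$; the convergence condition for the identity is met since the argument $-|x|^2\le 0$.

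For the asymptotics \eqref{gmaao}--\eqref{neweqnAln}, I would feed $a=s+d/2$, $b=s+r$, $c=d/2$ and $z=-|x|^2\to-\infty$ into the linear transformation (connection) formula for ${}_2F_1$ at infinity. When $a\neq b$, i.e. $r\neq d/2$, the two branches decay like $|x|^{-2(s+d/2)}$ and $|x|^{-2(s+r)}$; the slower one is governed by the exponent $s+\min\{r,d/2\}=s+\mu$ and dominates, giving \eqref{gmaao}. The degenerate case $a=b$, i.e. $r=d/2$, collapses the two branches and triggers the logarithmic version of the connection formula, which produces the extra factor $\ln(1+|x|^2)$ in \eqref{neweqnAln}.

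The main obstacle is twofold. On the analytic side one must justify moving $(-\Delta)^s$ inside the $\tau$-integral; this is cleanest in Fourier variables, where $(-\Delta)^s$ is multiplication by $|\xi|^{2s}$ and dominated convergence applies to the absolutely convergent $\tau$-integral. On the special-functions side, the delicate regime is $r>d/2$, where the dominant contribution comes from the branch carrying the coefficient $1/\Gamma(c-a)=1/\Gamma(-s)$; this factor vanishes precisely when $s$ is a positive integer, which is exactly why the hypothesis restricts to non-integer $s$, and the borderline case $r=d/2$ must be handled separately through the logarithmic limit.
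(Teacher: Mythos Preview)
Your argument is correct and takes a genuinely different route from the paper. The paper's Appendix~\ref{AppendixB} proceeds entirely in Fourier space: it first evaluates $\mathscr{F}\{(1+|x|^2)^{-r}\}$ by passing to $d$-spherical coordinates and invoking the Hankel-type formula \eqref{JtoK}, obtaining a constant multiple of $|\xi|^{r-d/2}K_{r-d/2}(|\xi|)$; it then multiplies by $|\xi|^{2s}$ and inverts the transform using a second Bessel identity \eqref{JK} that produces the ${}_2F_1$ directly. The asymptotic part is handled just as you do, via the connection formula for ${}_2F_1$ at $-\infty$.

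Your Gamma-superposition approach is more economical here because Proposition~\ref{case1:expo} has already done the hard Fourier work for the Gaussian; you only need scaling plus the single Laplace--Kummer identity $\int_0^\infty e^{-\tau}\tau^{\rho-1}\,{}_1F_1(a;b;-z\tau)\,{\rm d}\tau=\Gamma(\rho)\,{}_2F_1(a,\rho;b;-z)$, and you avoid the Bessel-$K$ machinery entirely. It also makes the structural link between \eqref{2Dcase} and \eqref{FLalge} transparent. The paper's route, on the other hand, is self-contained (it does not feed Proposition~\ref{case1:expo} into the proof) and yields the Fourier transform of $(1+|x|^2)^{-r}$ as a by-product, which may be of independent interest. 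Your remark that the interchange of $(-\Delta)^s$ with the $\tau$-integral is best justified on the Fourier side is exactly right and is the one place where a careful reader would want a line of detail.
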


%We first show the scaling property of the fractional Laplacian.
\begin{comment}
\begin{remark}\label{scalingA}{\em Note that the fractional Laplacian enjoys the scaling property:   if
$(-\Delta)^{s}u (x)=v(x),$
then the fractional Laplacian of $u(\beta x)$ with  $\beta>0$ is
\begin{equation}\label{lapscale}
(-\Delta)^{s} u(\beta x)= \beta^{2s}\, v(\beta x).
\end{equation}
It follows directly from the definition \eqref{fracLap-defn}:
\begin{equation*}
\begin{split}
(-\Delta)^s u(\beta x) & =C_{d,s}\, {\rm p.v.}\! \int_{\mathbb R^d} \frac{u(\beta x)-u(\beta y)}{|x-y|^{d+2s}}\, {\rm d}y= \frac {C_{d,s}} {\beta^d}  \, {\rm p.v.}\! \int_{\mathbb R^d} \frac{u(\beta x)-u(z)}{|x-z/\beta|^{d+2s}}\, {\rm d}y\\
& = \beta^{2s} {C_{d,s}} {\rm p.v.}\! \int_{\mathbb R^d} \frac{u(\beta x)-u(z)}{|\beta x-z|^{d+2s}}\,
{\rm d} z=  \beta^{2s}\, v(\beta x).
\end{split}
\end{equation*}
}
\end{remark}
\end{comment}

\subsection{Numerical results}\label{subsect43}
We apply the MCF spectral-Galerkin method to solve the model problem \eqref{dDsLap} in various situations.
\begin{example}\label{some_exact} {\bf (Accuracy test).} We first consider \eqref{dDsLap} with the following exact solutions:
 \begin{equation}\begin{split}\label{test1}
 &u_e(x)=e^{-|x|^2},\quad u_a(x)=(1+|x|^2)^{-r},\quad r>0,\;\; x\in {\mathbb R^d}.
  \end{split}\end{equation}
In view of  \eqref{2Dcase} and \eqref{FLalge}, the source terms $f_e(x)$ and $f_a(x)$ are respectively given by
\begin{equation}\begin{split}\label{fasy}
&f_e(x)= \gamma e^{-|x|^2}+\frac{ 2^{2s} \Gamma(s+d/2)}{\Gamma(d/2)}\, {}_{1} F_{1}\Big(s+\frac{d}{2};\frac{d}{2};-|x|^2\Big) ,
\\&f_a(x)= \gamma (1+|x|^2)^{-r}+\frac{2^{2s}\Gamma(s+r)\Gamma(s+d/2)}{ \Gamma(r)\Gamma(d/2)} {}_2F_1\Big(s+r,s+\frac{d}{2};\frac{d}{2};-|x|^2\Big).
\end{split}\end{equation}
%Note from \eqref{lapscale} that we can easily incorporate  a scaling factor $\beta$ into the testing solutions, which is equivalent to the use of a scaling factor in the MCF basis as in Remark \ref{Rmk21}.

%\smallskip

%Observe from Propositions \ref{case1:expo}-\ref{case1:alg} that for $s\in (0,1),$  $f_e(x)\sim (1+|x|^2)^{-(s+d/2)},$
%and
%\begin{equation}\label{f2case}
%f_a(x)\sim  \frac{1} {(1+|x|^2)^{s+r}},\;\;   \frac{\ln (1+|x|^2)} {(1+|x|^2)^{s+d/2}},\;\;  \frac{1} {(1+|x|^2)^{s+d/2}},
%\end{equation}
%for $r<\frac d 2, r=\frac d 2$ and $r>\frac d 2, $ respectively.

 \begin{figure}[!th]
\subfigure[$d=1$ and $u_e(x)=e^{-x^2}$]{
\begin{minipage}[t]{0.42\textwidth}
\centering
\rotatebox[origin=cc]{-0}{\includegraphics[width=0.9\textwidth]{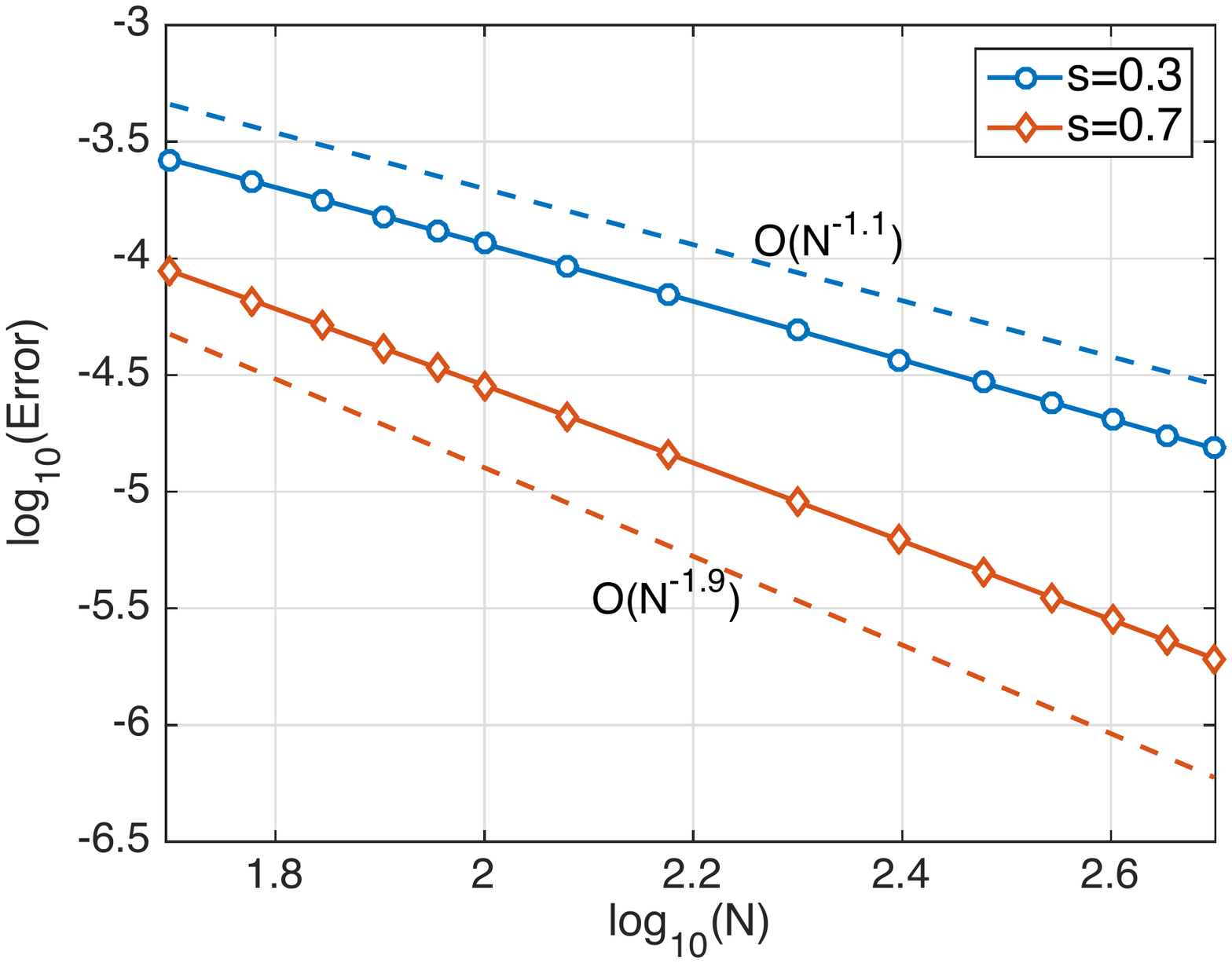}}
\end{minipage}}
\subfigure[$d=1$ and $u_a(x)=(1+|x|^2)^{-2.3}$]{
\begin{minipage}[t]{0.42\textwidth}
\centering
\rotatebox[origin=cc]{-0}{\includegraphics[width=0.9\textwidth]{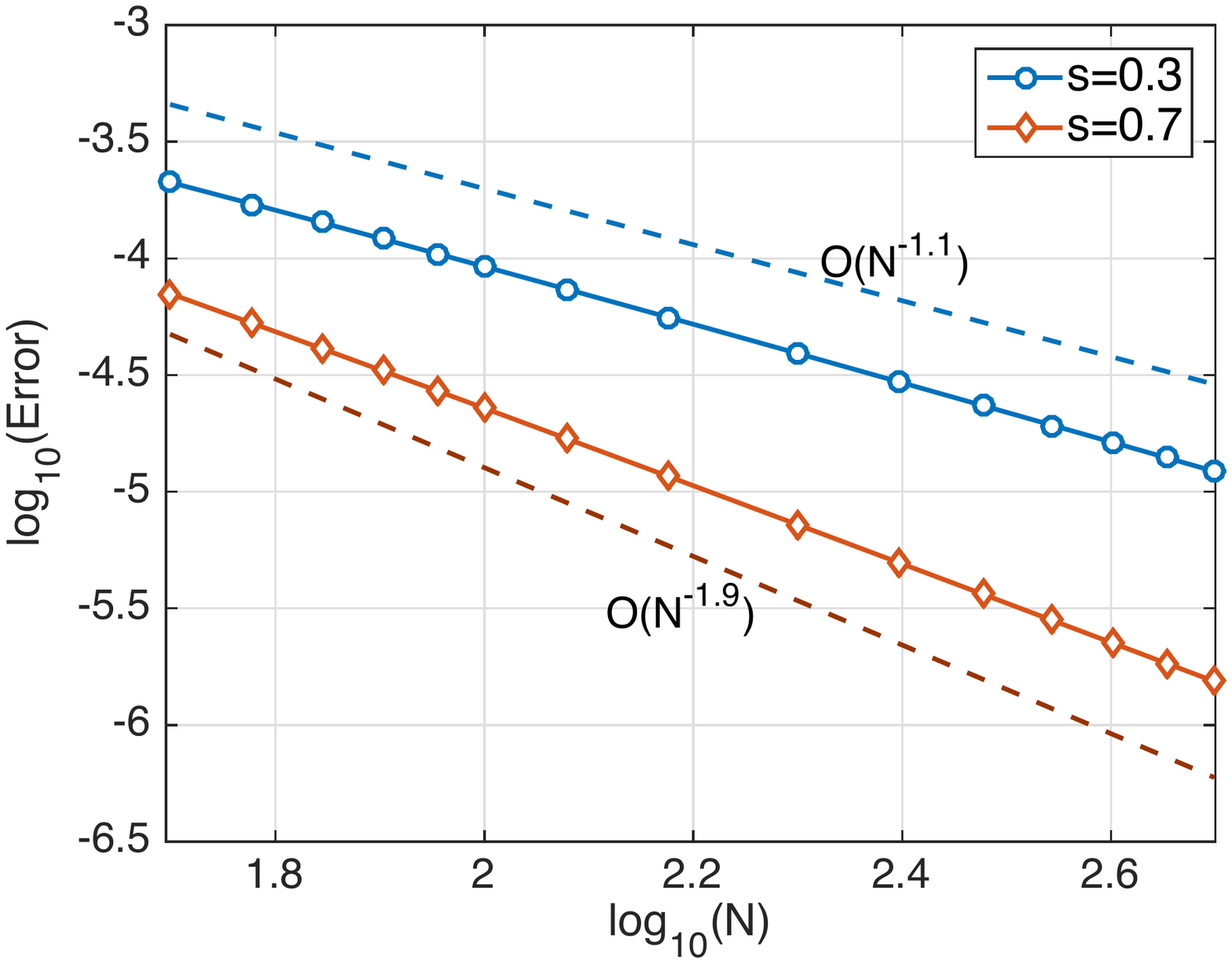}}
\end{minipage}}\vskip -5pt

\subfigure[$d=2$ and $u_e(x)=e^{-|x|^2}$]{
\begin{minipage}[t]{0.42\textwidth}
\centering
\rotatebox[origin=cc]{-0}{\includegraphics[width=0.9\textwidth]{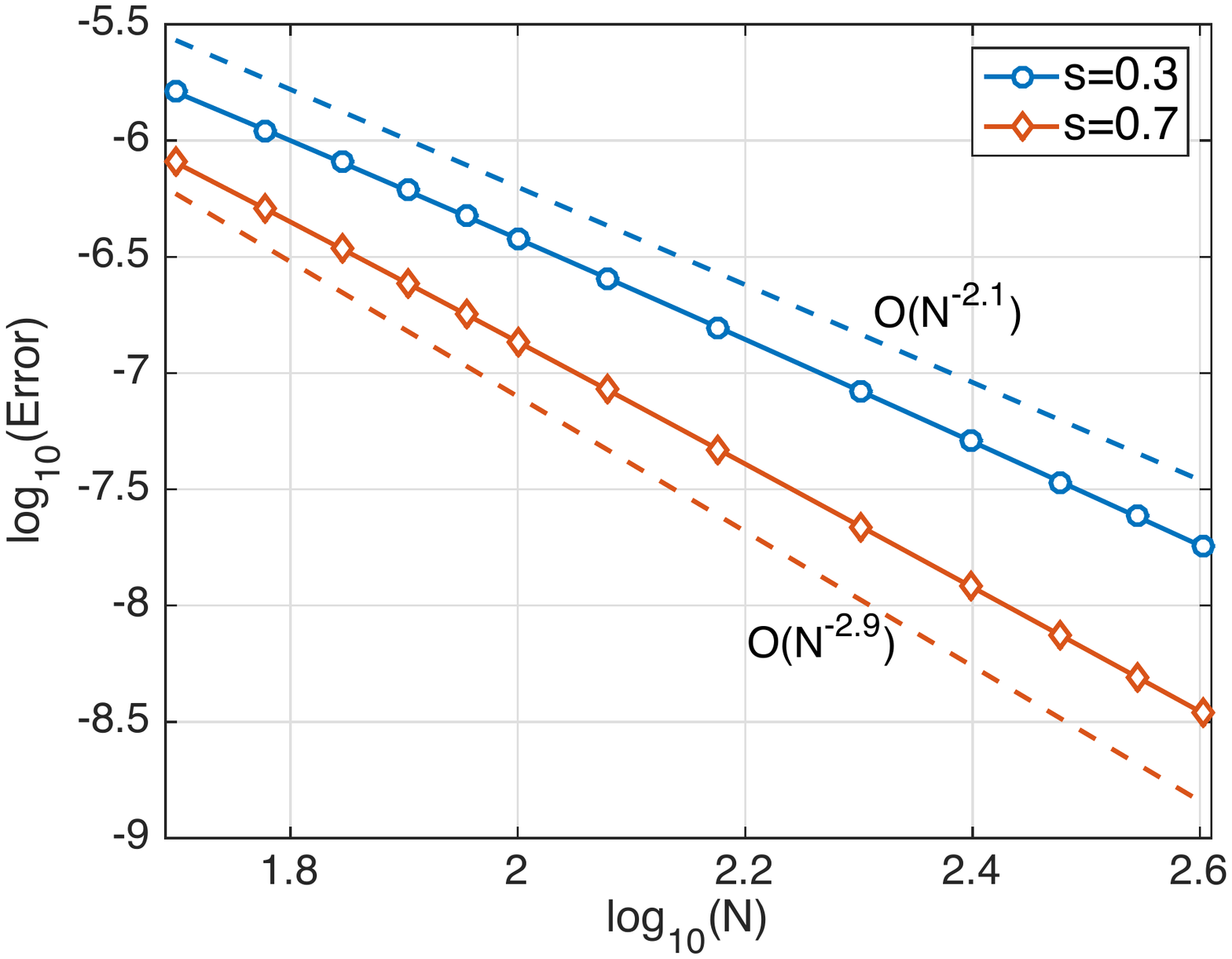}}
\end{minipage}}
\subfigure[$d=2$ and $u_a(x)=(1+|x|^2)^{-2.3}$]{
\begin{minipage}[t]{0.42\textwidth}
\centering
\rotatebox[origin=cc]{-0}{\includegraphics[width=0.9\textwidth]{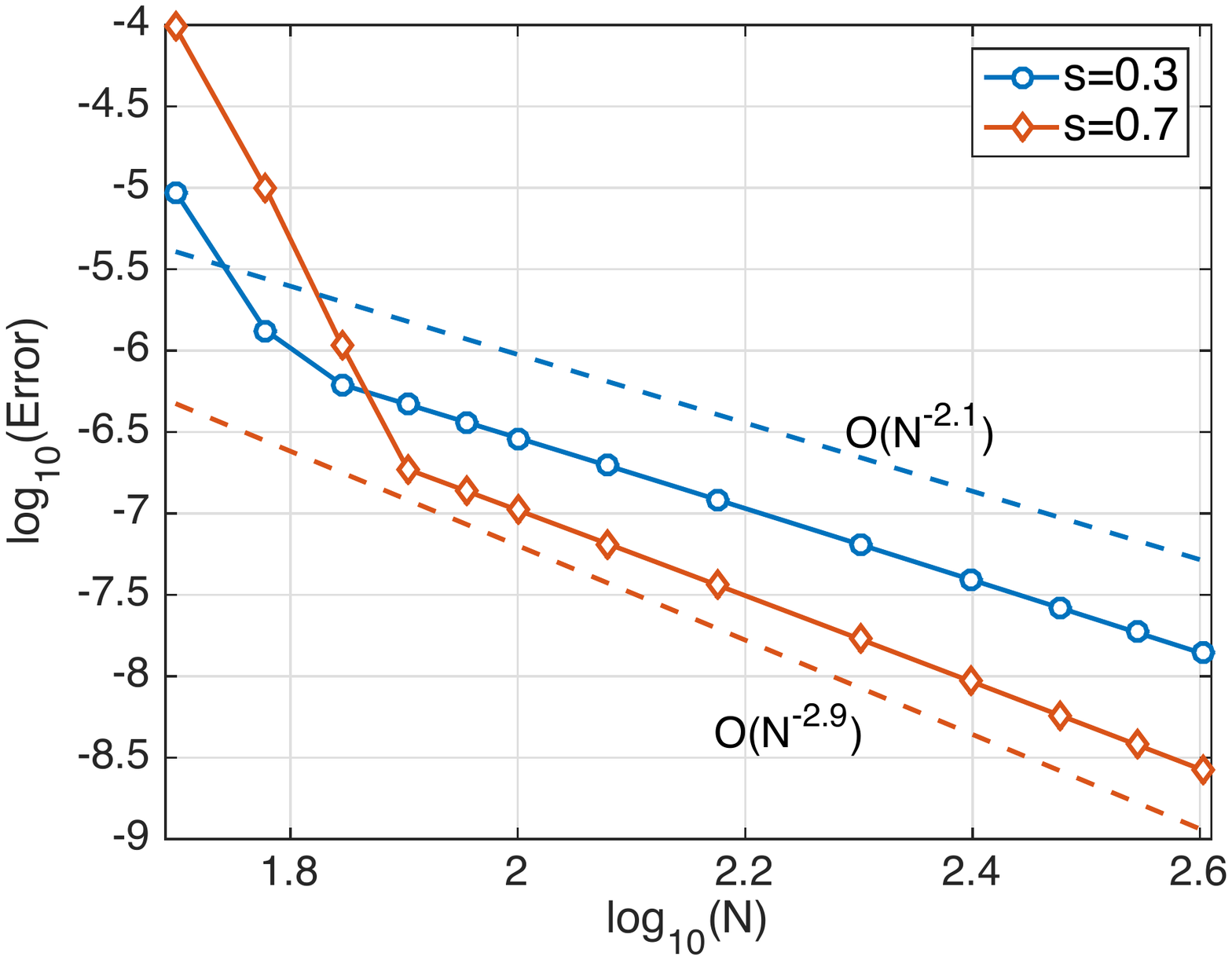}}
\end{minipage}}\vskip -5pt

\subfigure[$d=3$ and $u_e(x)=e^{-|x|^2}$]{
\begin{minipage}[t]{0.42\textwidth}
\centering
\rotatebox[origin=cc]{-0}{\includegraphics[width=0.9\textwidth]{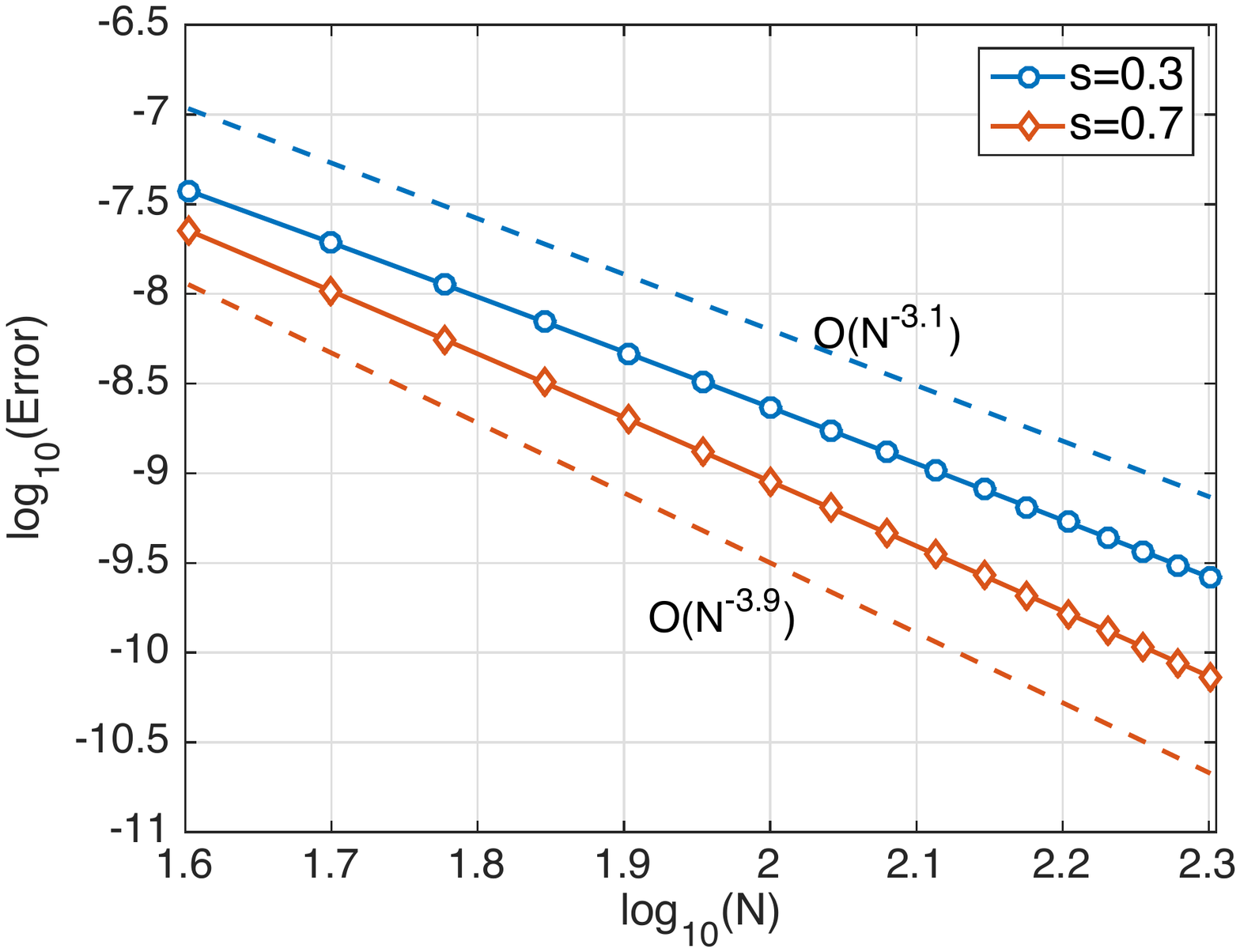}}
\end{minipage}}
\subfigure[$d=3$ and $u_a(x)=(1+|x|^2)^{-2.3}$]{
\begin{minipage}[t]{0.42\textwidth}
\centering
\rotatebox[origin=cc]{-0}{\includegraphics[width=0.9\textwidth]{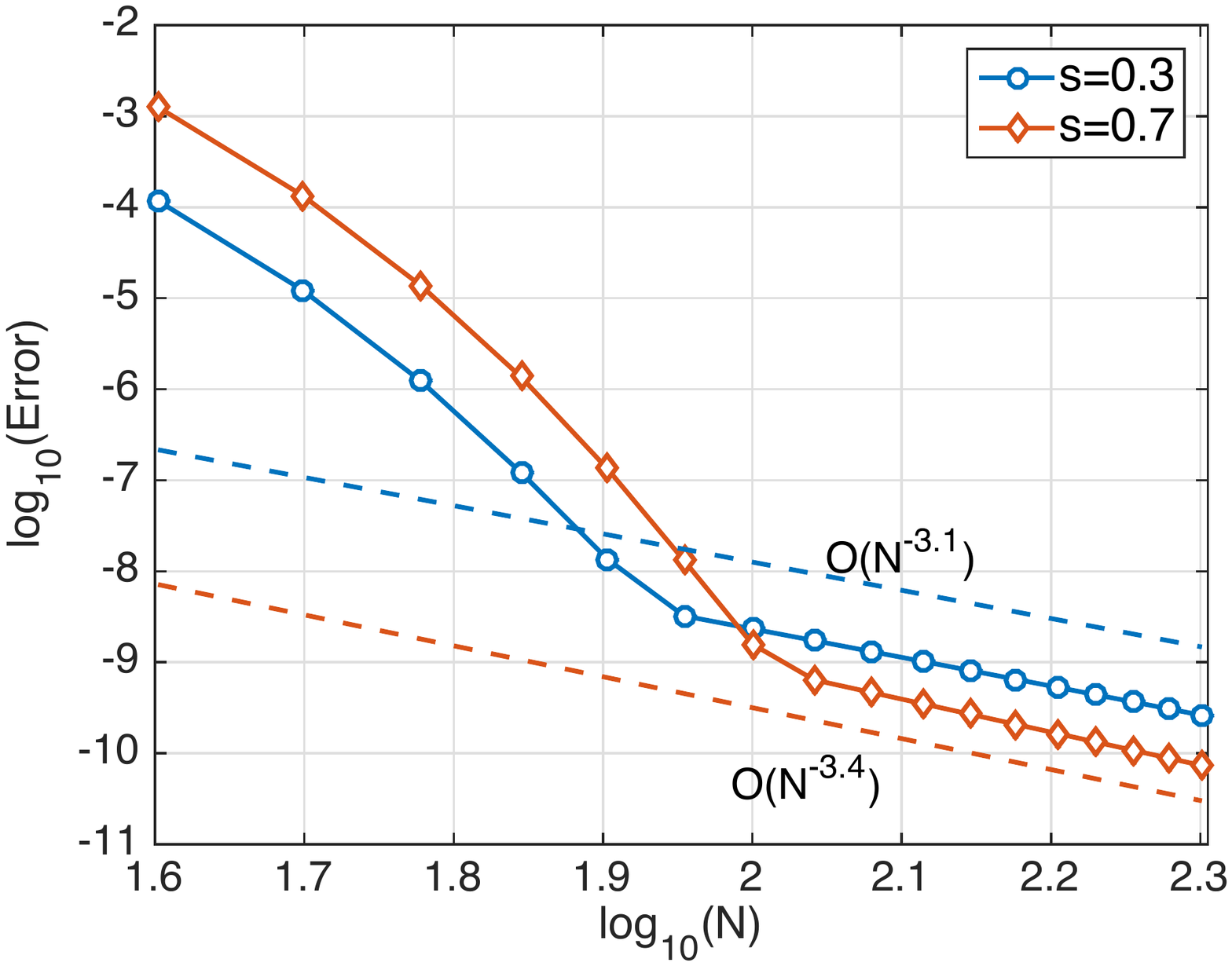}}
\end{minipage}}%\vskip -5pt
\caption
{\small  Decay of $H^s$-errors of the MCF scheme with $\gamma=1$ and the scaling factor $\nu=2.5$ for Example \ref{some_exact}  with exact solutions in \eqref{test1}. Here  $s=0.3,~0.7$ and $r=2.3$.
The dashed reference lines are expected orders   predicted by Proposition \ref{th3.3w}.
  }\label{Example1}
\end{figure}

Now, we intend to use the error estimates in  Propositions \ref{case1:expo} and \ref{case1:alg} to analytically calculate the expected order of convergence by the MCF scheme, and then verify the convergence order numerically.
For this purpose, we consider a generic function of algebraic decay as follows
\begin{equation}\label{formuf}
 w(x)=\frac{1}{(1+|x|^2)^\mu},\quad x\in \mathbb R^d,\;\; \mu>0,
\end{equation}
Using the definitions \eqref{Doper} and \eqref{neweqn}, we obtain from   direct calculation that
\begin{equation}
\begin{split}
D_{x_j}w(x)&=(1+x^2_j)^{\frac32}\partial_{x_j}\big\{(1+x^2_j)^{\frac12}w(x)\big\} =(1+x_j^2)(1+|x|^2)^{-\mu-1}(-2\mu x_j(1+x_j^2)%\sim |x|^{-2\gamma+3}
\\&\quad +x_j(1+|x|^2))\sim |x_j|^{-2\mu+3}\prod_{ l\neq j}|x_l|^{-2\mu},
%&D_{x_k}D_{x_j} f(x)\sim |x|^{-2\gamma+5},  \\
%&D_{x_k}D_{x_j}^2f(x)\sim |x|^{-2\gamma+7}, \\
\end{split}
\end{equation}
and similarly,
\begin{equation*}
\begin{split}
&D_{x_k}D_{x_j}w(x)%&=(1+x^2_k)^{\frac32}\partial_{x_j}\big\{(1+x^2_j)^{\frac32}\partial_{x_j}\big\{(1+x^2_j)^{\frac12}f(x)\big\}(1+x_k^2)^{\frac12}\}%\sim |x|^{-2\gamma+3}
\sim  |x_k|^{-2\mu+3}|x_j|^{-2\mu+3}\prod_{l\neq j}|x_l|^{-2\mu},
%\\&
\;\;\;  D_{x_k}^2D_{x_j}w(x)\sim |x_k|^{-2\mu+5} |x_j|^{-2\mu+3}\prod_{l\neq j,k}|x_l|^{-2\mu}.
\end{split}
\end{equation*}
By an induction argument, we can show
\begin{equation}\label{D435}\begin{split}
&D_{x_k}^{m-1}D_{x_j}w(x)\sim |x_k|^{-2\mu+2m-1} |x_j|^{-2\mu+3}\prod_{ l\neq j,k}|x_l|^{-2\mu}, \quad j\neq k,
\\&D_{x_k}^{m}w(x)\sim |x_k|^{-2\mu+2m+1}\prod_{l\neq k}|x_l|^{-2\mu}.
\end{split}\end{equation}
Thus, for $j\neq k$, we have
\begin{equation}\label{D436}\begin{split}
&I_{kj}(x):=|D_{x_k}^{m-1}D_{x_j}w(x)|^{2}\varpi^{1+(m-1)e_k+e_j}(x)
%\\&\qquad\
\sim |x_k|^{-4\mu+2m-2} |x_j|^{-4\mu+4}\prod_{l\neq j,k}|x_l|^{-2\mu-2},
\end{split}\end{equation}
and for $1\le k\le d,$
\begin{equation}\label{D4366}\begin{split}
I_{kk}(x):=|D_{x_k}^{m}w(x)|^{2}\varpi^{1+me_k}(x)\sim |x_k|^{-4\mu+2m} \prod_{l\neq k}|x_l|^{-2\mu-2}. %,\quad j=k.
\end{split}\end{equation}
%\begin{equation}\begin{split}
%&I_{kj}:=|D_{x_k}^{m_2-1}D_{x_j}f(x)|^{2}(1+x_{k}^2)^{-m_2}(1+x_j^2)^{-2}\prod_{l\neq j,k}(1+x_l^2)^{-1}
%\\&\qquad\sim |x_k|^{-4\gamma+2m_2-2} |x_j|^{-4\gamma+4}\prod_{l\neq j,k}|x_l|^{-2\gamma},\quad j\neq k,
%\\&I_{kk}:=|D_{x_k}^{m_2}f(x)|^{2}(1+x_{k}^2)^{-m_2-1}\prod_{l\neq j}(1+x_l^2)^{-1}\sim |x_k|^{-4\gamma+2m_2} \prod_{l\neq k}|x_l|^{-2\gamma},\quad j=k.
%\end{split}\end{equation}
Then by \eqref{Bnormex2}, \eqref{neweqn} and \eqref{D4366},  we find that   if $m<2\mu-\frac12,$
then
\begin{equation}\label{neweqnBA}
\begin{split}
&  |w|_{B^m(\mathbb{R}^d)}^2=\dsum^d_{k=1}\int_{\mathbb R^d} I_{kk}(x){\rm d}x <\infty,  \;\;  [[w]]_{B^m(\mathbb R^d)}^2 =|w|_{B^m(\mathbb R^d)}^2
 +\sum_{j=1}^d  \sum_{k\not =j}\int_{\mathbb R^d} I_{kj}(x){\rm d}x <\infty.
 \end{split}
\end{equation}
%Similarly, by \eqref{neweqn},
%\begin{equation}\label{neweqnQ}
%\begin{split}
%[[w]]_{B^m(\mathbb R^d)}^2 =|w|_{B^m(\mathbb R^d)}^2
% +\sum_{j=1}^d  \sum_{k\not =j}\int_{\mathbb R^d} I_{kj}(x){\rm d}x <\infty,
% \quad{\rm if}\;\;    m<2\mu-\frac12.
%\end{split}
%\end{equation}

%As a result,  we require that for $1\le k, j\le d,$ %$m<2\gamma+\frac12$, if $k\neq j$; we need $m<2\gamma-\frac12$, if $k= j$.
% \begin{equation}\label{errgamma}
%[[f]]_{B^{m_2}(\mathbb R^d)}<\infty\;\;\;   {\rm i.e.}\;\;\;  \|I_{kj}\|_{L^2(\mathbb{R}^d)}<\infty \quad\Rightarrow \quad   m_2<2\gamma-\frac12,  %-\varepsilon,\quad \text{for sufficiently small }\varepsilon>0,
%\end{equation}
%which implies the regularity index $m_2$ is  independent of $d$.

For the exact solution  $u_e(x)=e^{-|x|^2},$  we have from \eqref{asymexpo} and \eqref{fasy} that  $f_e(x)\sim (1+ |x|^2)^{s+d/2}.$ Therefore,  in this case, the error is dominated by the MCF interpolation approximation of $f_e(x).$
Therefore, using  \eqref{neweqnBA} with $\mu=s+d/2,$ we conclude from Proposition \ref{th3.2} that the expected convergence is  $O(N^{-(2s+d)+1/2+\varepsilon})$ for small $\varepsilon>0.$
Remarkably,  the numerical results in  Figure \ref{Example1} (a), (c), (e) perfectly agree with the theoretical prediction (see the dashed reference lines).  Indeed, it is very different from the usual Laplacian (see Proposition \ref{case1:expo}),  we do not expect the exponential convergence, but  algebraic decay of the errors.

We now turn to the second case with the exact solution $u_a(x)=(1+|x|^2)^{-r},$ where we take $r=2.3$ in the numerical tests.  As $r>d/2,$ we derive from Proposition \ref{case1:alg} that
$f_a(x) \sim (1+ |x|^2)^{s+d/2}.$ Then by  \ref{neweqnBA} and Proposition \ref{th3.2}, we have the convergence behaviour
$$
\|u_a-u_N\|_{H^s(\mathbb R^d)}= O(N^{s-m_1})+O(N^{-m_2}),\quad m_1<2r-\frac 1 2, \;\; m_2<2s+d-\frac 1 2.
$$
This implies the  convergence order: $O(N^{-\min\{2r-s, 2s+d\}+\frac 1 2+\varepsilon}).$
Indeed, we observe from Figure \ref{Example1} (b), (d), (f) a perfect agreement again.  For example,
for $ d=3$ and $r=2.3,$ we have the rate $O(N^{-3.1+\varepsilon})$ for $s=0.3, $ while  the rate  $O(N^{-3.4+\varepsilon})$ for $s=0.7.$ Interestingly, there is a pre-asymptotic range where one observes a sub-geometric convergence from Figure \ref{Example1}  (d), (f)  for $d=2,3$,
but after the pre-asymptotic range, the convergence rates become algebraic as predicted.
Such a phenomenon has been also observed for the Laguerre function approximation  (cf. \cite[P. 278]{shen2011spectral}).

We highlight that the above numerical tests validate

\end{example}

\begin{figure}[!th]
\centering
\subfigure[Different scaling factors]{
\begin{minipage}[t]{0.45\textwidth}
\centering
\rotatebox[origin=cc]{-0}{\includegraphics[width=0.9\textwidth]{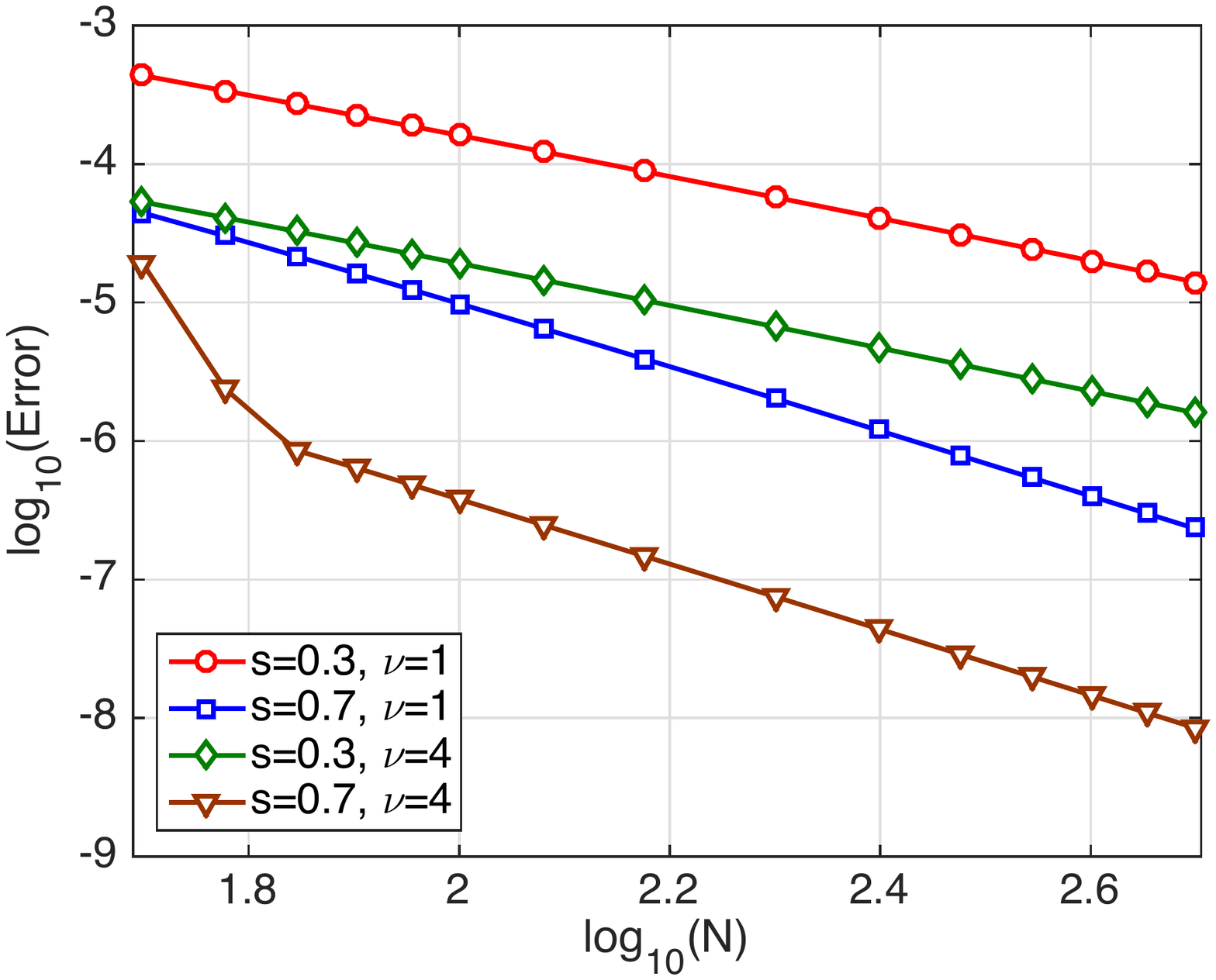}}
\end{minipage}}
\subfigure[Comparison with Hermite approach]{
\begin{minipage}[t]{0.45\textwidth}
\centering
\rotatebox[origin=cc]{-0}{\includegraphics[width=0.9\textwidth]{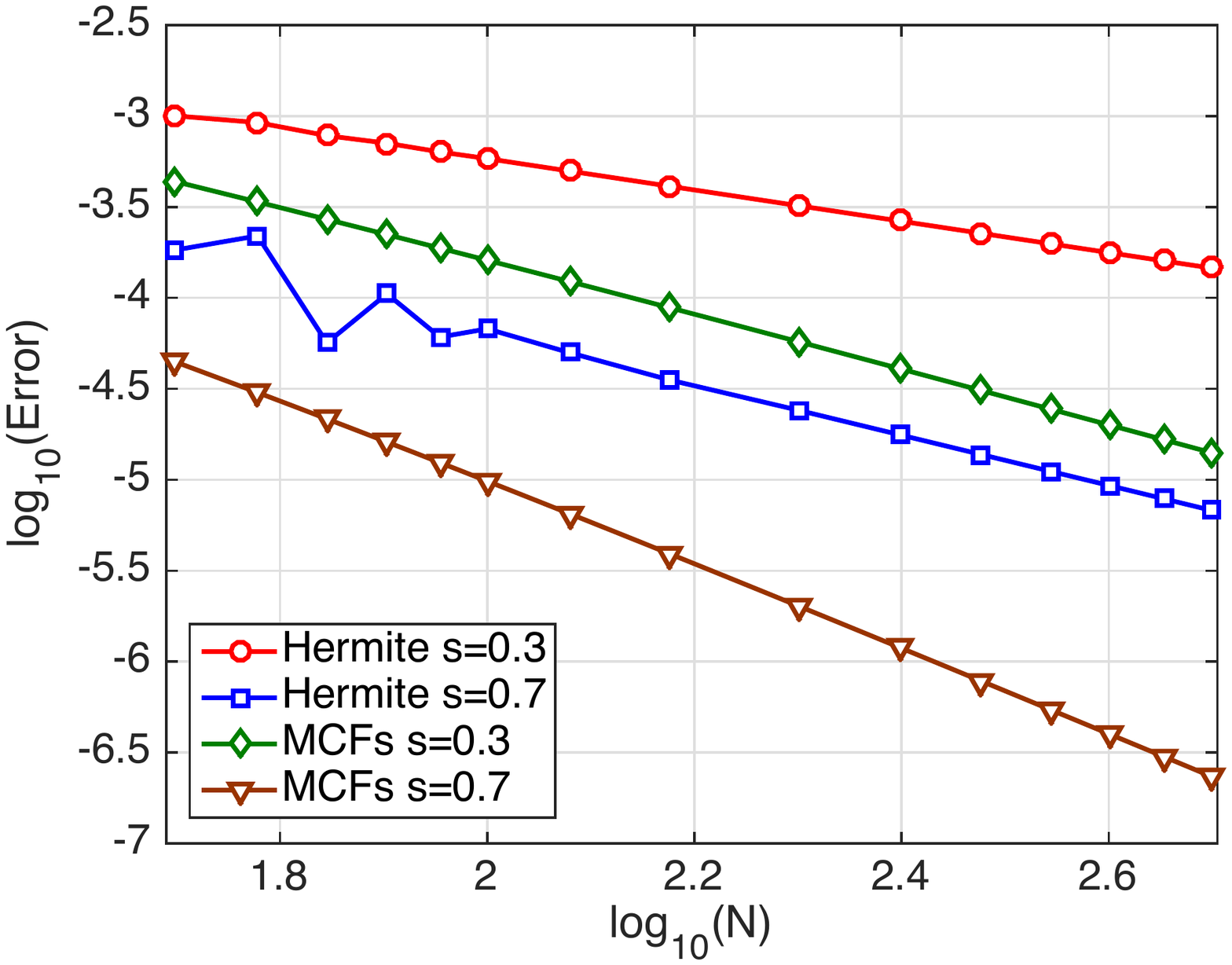}}
\end{minipage}}\vskip -10pt
\caption
{\small (a). Maximum error for the exact solution $u(x)=(1+x^2)^{-2.3}$ with different scaling factor $\nu$, and $s=0.3,~0.7$;
  (b). A comparison of maximum error between our method and Hermite-Galerkin method \cite{mao2017hermite} for exact solution $u(x)=(1+x^2)^{-2.3}$, with $s=0.3,~0.7$, $\nu=1$.}\label{fig1dexact}
\end{figure}

\begin{example}\label{1dcase_exact} {\bf (Effect of the scaling factor).}
  In this example, we first show the influence of the scaling factor $\nu$ to the accuracy.
 It is  known that, with a proper choice of the scaling parameter, the accuracy of spectral method on the unbounded domain can all be improved.
Here,  we plot in Figure \ref{fig1dexact} (a) the maximum error in log-log scale of our MCF algorithm  with different scaling parameter $\nu$.
 We observe that for any fixed $s$, the two error curves are nearly  parallel, which implies a proper scaling will improve the accuracy, but will not change the convergence rate. In Figure \ref{fig1dexact} (b), we compare the maximum errors of our algorithm using MCFs as basis functions with the Hermite spectral method in \cite{mao2017hermite}, for which we take $r=2.3$. As we can see from Figure  \ref{fig1dexact} (b) that the convergence rates of our approach are faster than that of the Hermite spectral method in \cite{mao2017hermite}.
\end{example}
%

%%%%%%%%

\begin{example}\label{1dcase_f}{\bf (Accuracy for given source term $f(x)$).} Here, we further compare our MCF method with the Hermite function approximation in \cite{mao2017hermite}, where the tests were provided for   given  source terms with unknown solutions. %, namely, we take $f(x)=(1+x)e^{-\frac{x^2}{2}}$ and $f(x)=(1+x^2)^{-2}$, respectively.
  We therefore  compute the reference ``exact"  solutions with large  $N=600$.
In Figure \ref{Example3} (a)-(c), we compare the $L^2$-errors of our algorithm with the Hermite spectral method in \cite{mao2017hermite} in one and two dimensions.  It is noteworthy that the algorithm in  \cite{mao2017hermite} is computationally prohibitive for $d=3.$  In all cases, our approach outperforms the Hermite method in both accuracy and efficiency.
%, where  we take $f(x)=(1+x)e^{-\frac{x^2}{2}}$ (resp. $f(x)=(1+x^2)^{-2}$), $\gamma=1$, $\nu=2.5$ (resp. $\nu=10/7$).
%In Figure \ref{Example3} (c),  we compare the $L^2$-errors of our algorithm with the Hermite spectral method in \cite{mao2017hermite}, for which we take $f=(1+x_1)(1+x_2)e^{-\frac{x^2_1+x^2_2}{2}}$, $\gamma=1$, $\nu=2.5$ and $s=0.6,0.9$.
We report in  Figure \ref{Example3} (d)-(f)  the maximum point-wise errors against various $N$ with $d=2,3.$ The MCF method performs consistently  well.
%, for which we take $f=(1+x_1^2+5x_2^2)^{-2}$, $\gamma=1$, $\nu=2.5$ and $s=0.3,0.7$.
%In Figure \ref{Example3} (e) - (f), we plot, log-log scale, the maximum errors of MCFs method against various $N$,  for which we take $f(x_1,x_2,x_3)=(1+x_1+2x_2^2+3x_3^2)e^{-\frac{x_1^2+x_2^2+x_3^2}{2}}$ and $f(x_1,x_2,x_3)=(1+x_1^2+2x_2^2+3x_3^2)^{-2.7}$, $\gamma=1$, $\nu=4$ and $s=0.3,0.7.$

We also tabulate  in Table \ref{tab1} the $L^2$-errors and  the convergence orders  of two methods  (see Table 2 and 3 of \cite{mao2017hermite} for the data of the Hermite method).
Here,  $f(x)=(1+x)e^{-\frac{x^2}{2}}$, $s=0.6,~0.9$ and $\nu=2.5$. Observe that the MCF method possesses higher convergence rates.

\begin{comment}
To study the asymptotic behavior of the solution of fractional model problem \eqref{dDsLap}
when $|x|\longrightarrow\infty$,
we also plot in Figure \ref{fig1dx} the numerical solution in interval $[10,100]$ in log-log scale.
Here we use MCF as basis function and take scaling parameter $\nu=4.$
We observe that (i). for exponential decay $f(x)$, the solutions decay algebraically for $s<1$
and the solution decays exponentially for $s=1$;
(ii). for algebraic decay $f(x)$, the solutions have algebraic decay for all values of $s.$
\end{comment}
\end{example}

 \begin{figure}[!th]
\subfigure[$d=1$ and  $f(x)=(1+x)e^{-\frac{x^2}{2}}$]{
\begin{minipage}[t]{0.42\textwidth}
\centering
\rotatebox[origin=cc]{-0}{\includegraphics[width=0.9\textwidth]{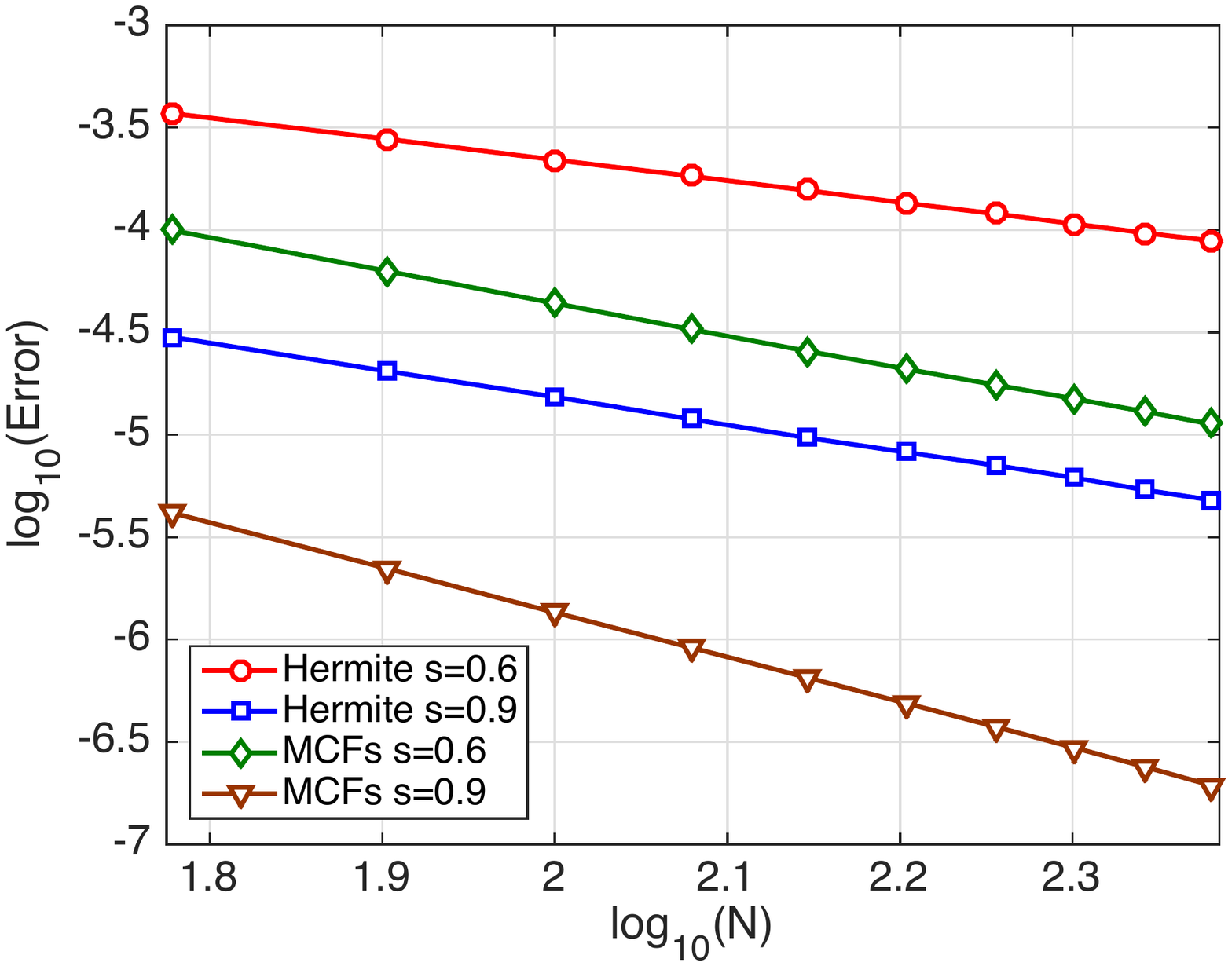}}
\end{minipage}}
\subfigure[$d=1$ and $f(x)=(1+x^2)^{-2}$]{
\begin{minipage}[t]{0.42\textwidth}
\centering
\rotatebox[origin=cc]{-0}{\includegraphics[width=0.9\textwidth]{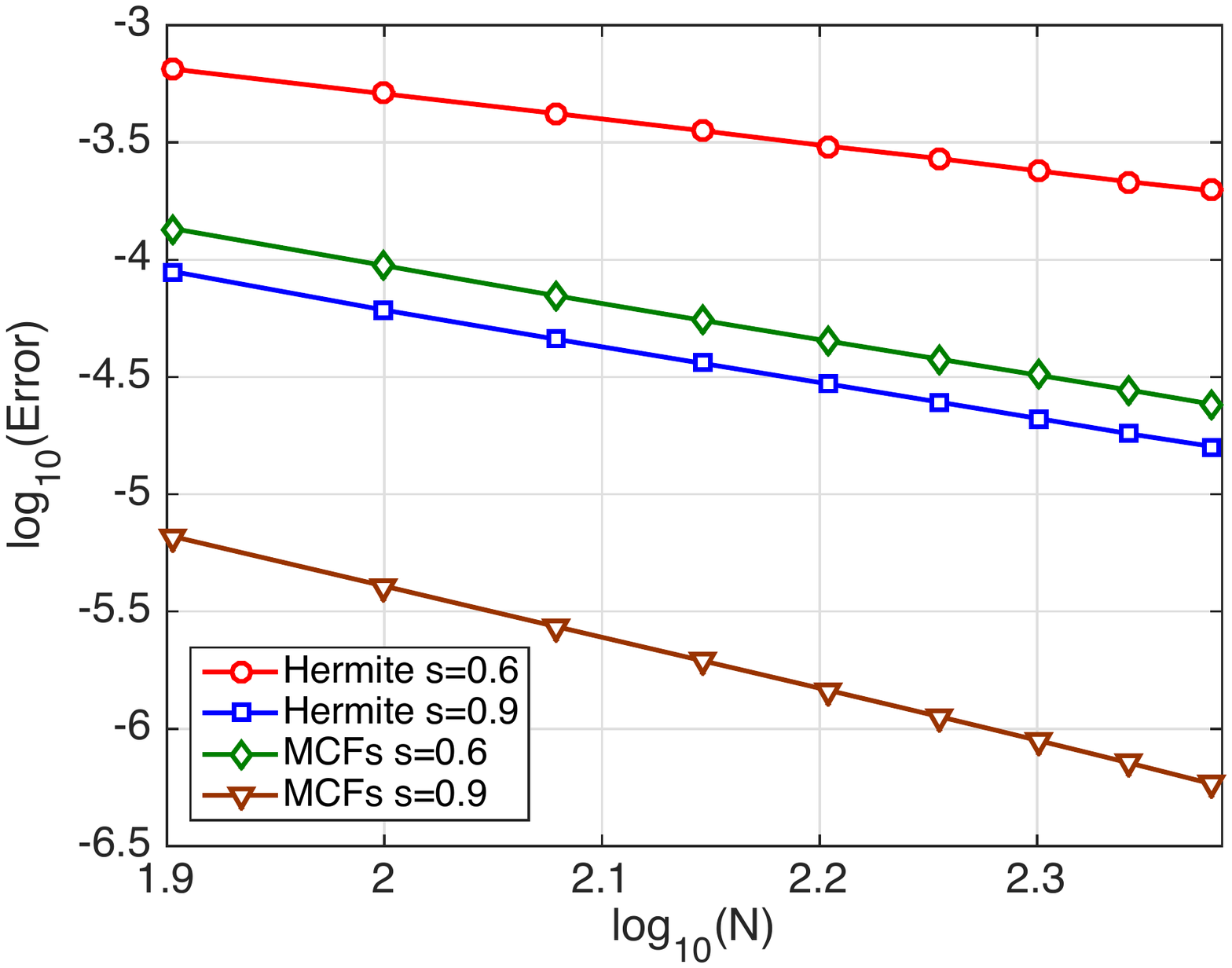}}
\end{minipage}}\vskip -5pt

\subfigure[$f(x_1,x_2)=(1+x_1)(1+x_2)e^{-\frac{|x|^2}{2}}$]{
\begin{minipage}[t]{0.42\textwidth}
\centering
\rotatebox[origin=cc]{-0}{\includegraphics[width=0.9\textwidth]{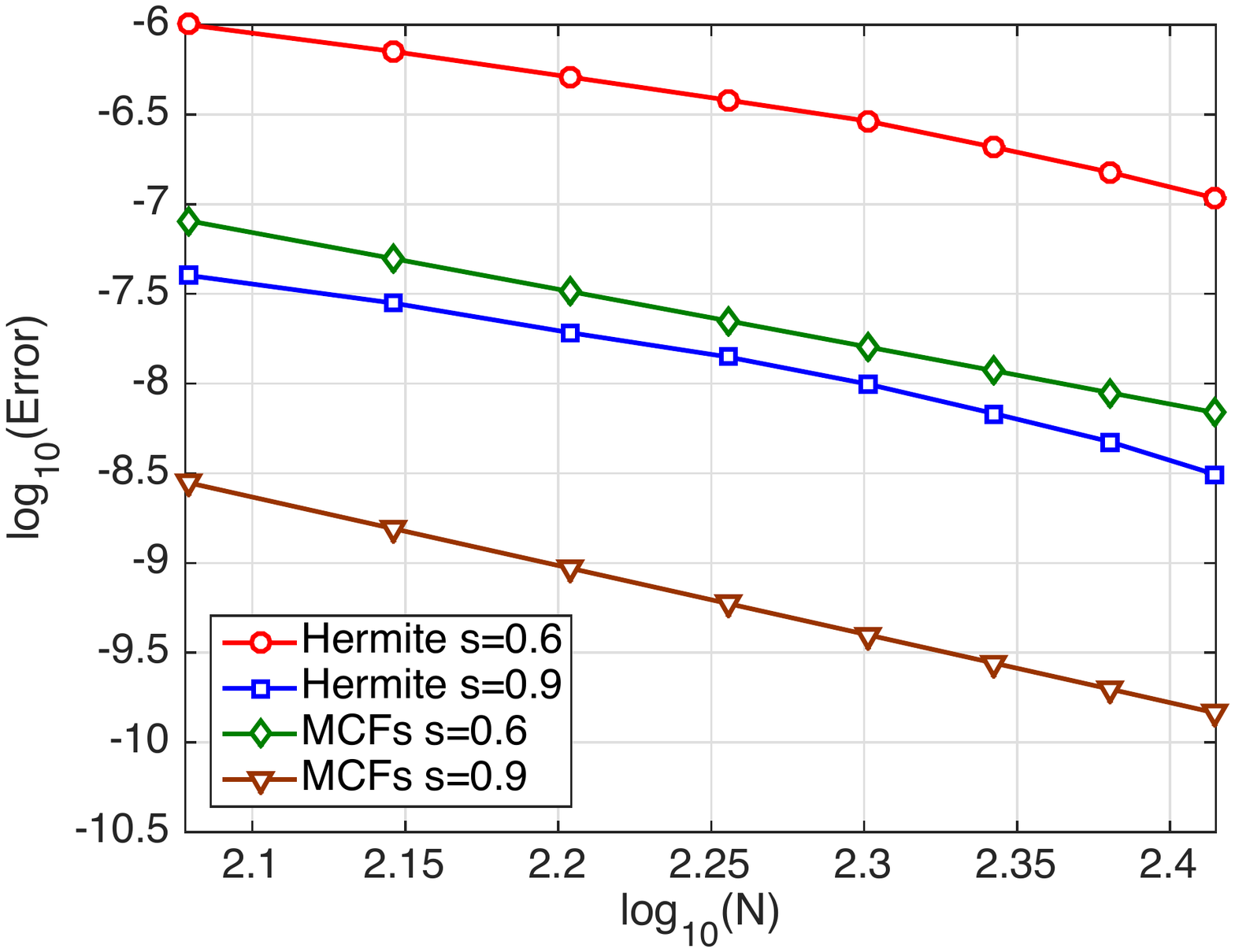}}
\end{minipage}}
\subfigure[$f(x_1,x_2)=(1+2x_1^2+5x_2^2)^{-2}$]{
\begin{minipage}[t]{0.42\textwidth}
\centering
\rotatebox[origin=cc]{-0}{\includegraphics[width=0.9\textwidth]{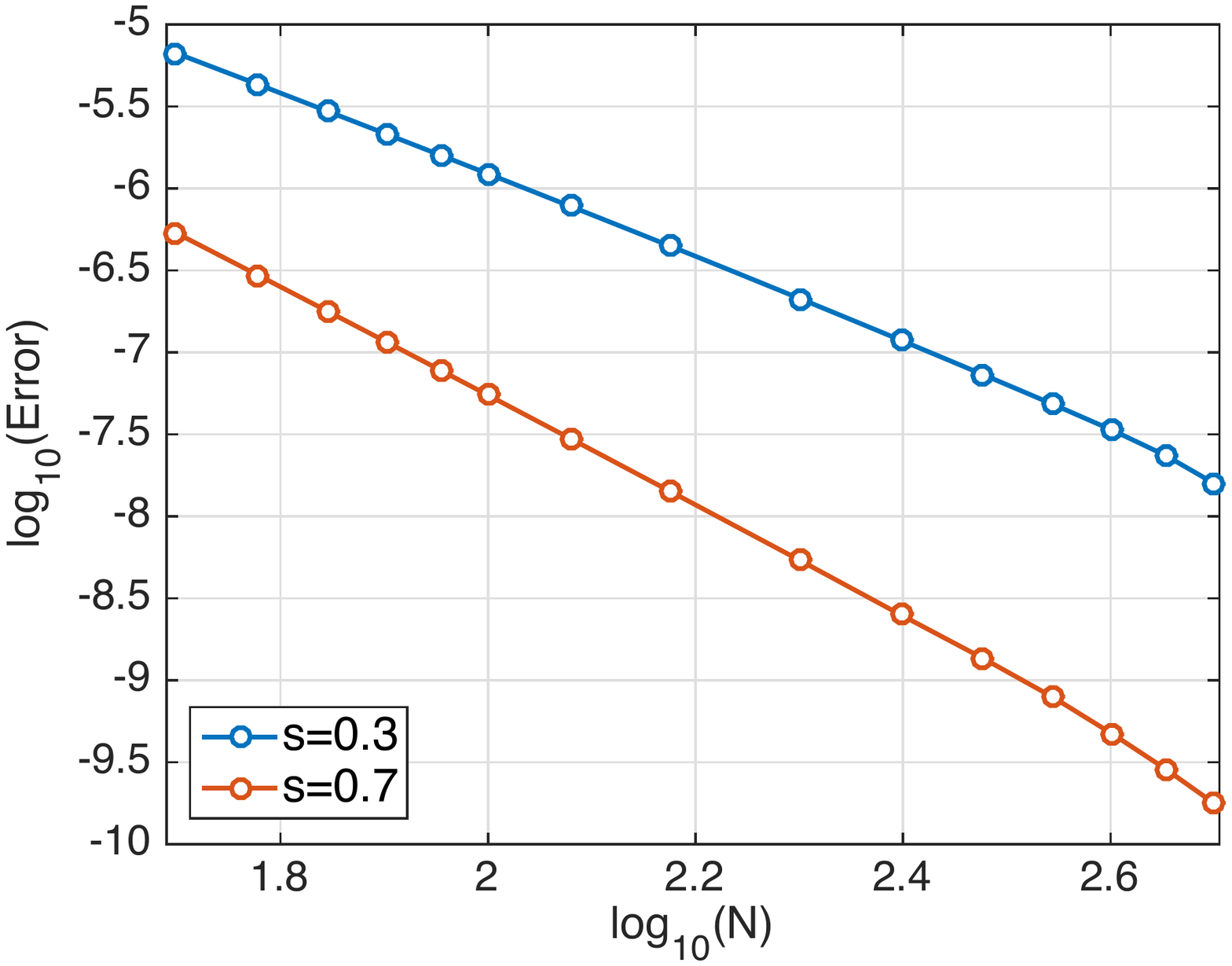}}
\end{minipage}}\vskip -5pt

\subfigure[$f(x_1,x_2,x_3)=(1+x_1+2x_2^2+3x_3^2)e^{-\frac{|x|^2}{2}}$]{
\begin{minipage}[t]{0.42\textwidth}
\centering
\rotatebox[origin=cc]{-0}{\includegraphics[width=0.9\textwidth]{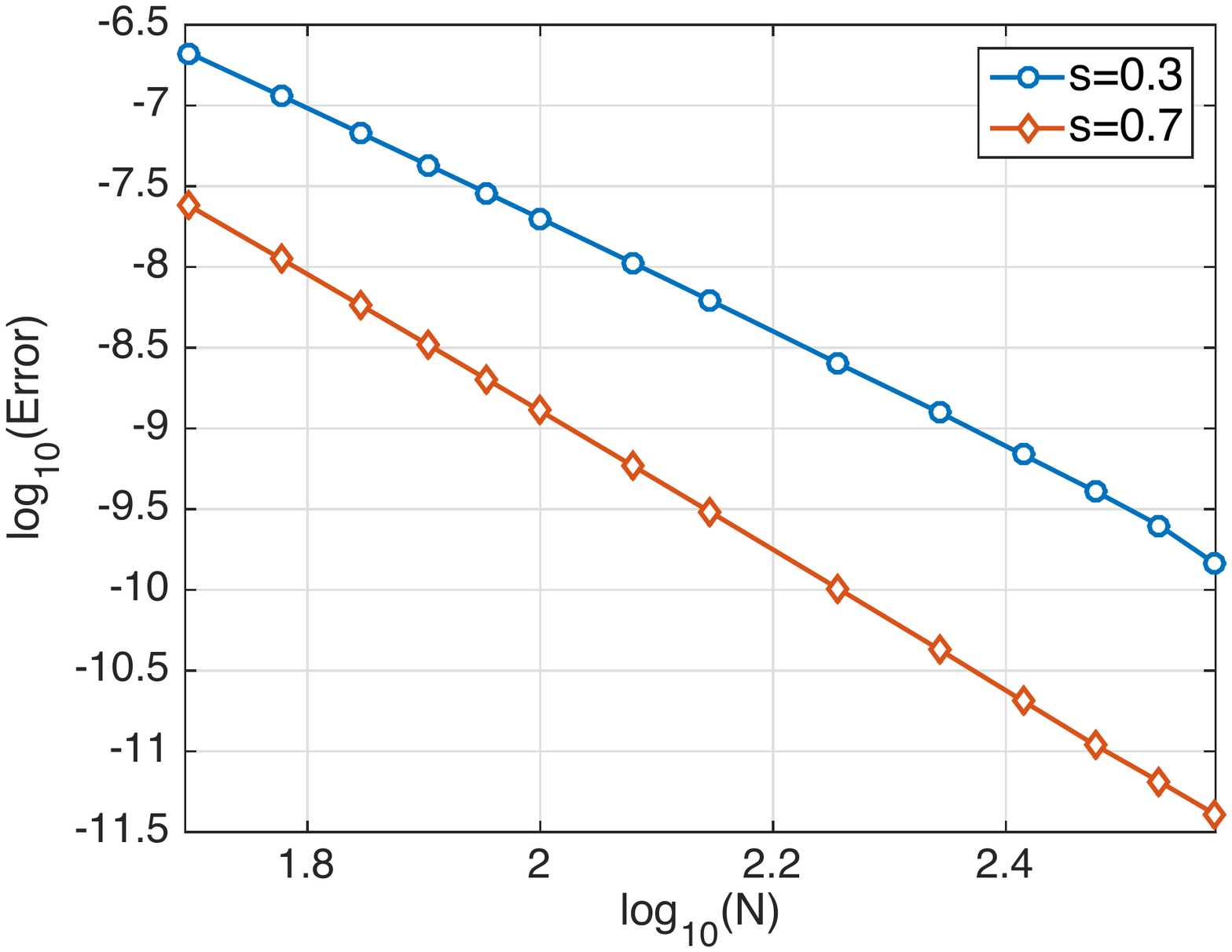}}
\end{minipage}}
\subfigure[$f(x_1,x_2,x_3)=(1+x_1^2+2x_2^2+3x_3^2)^{-2.7}$]{
\begin{minipage}[t]{0.42\textwidth}
\centering
\rotatebox[origin=cc]{-0}{\includegraphics[width=0.9\textwidth]{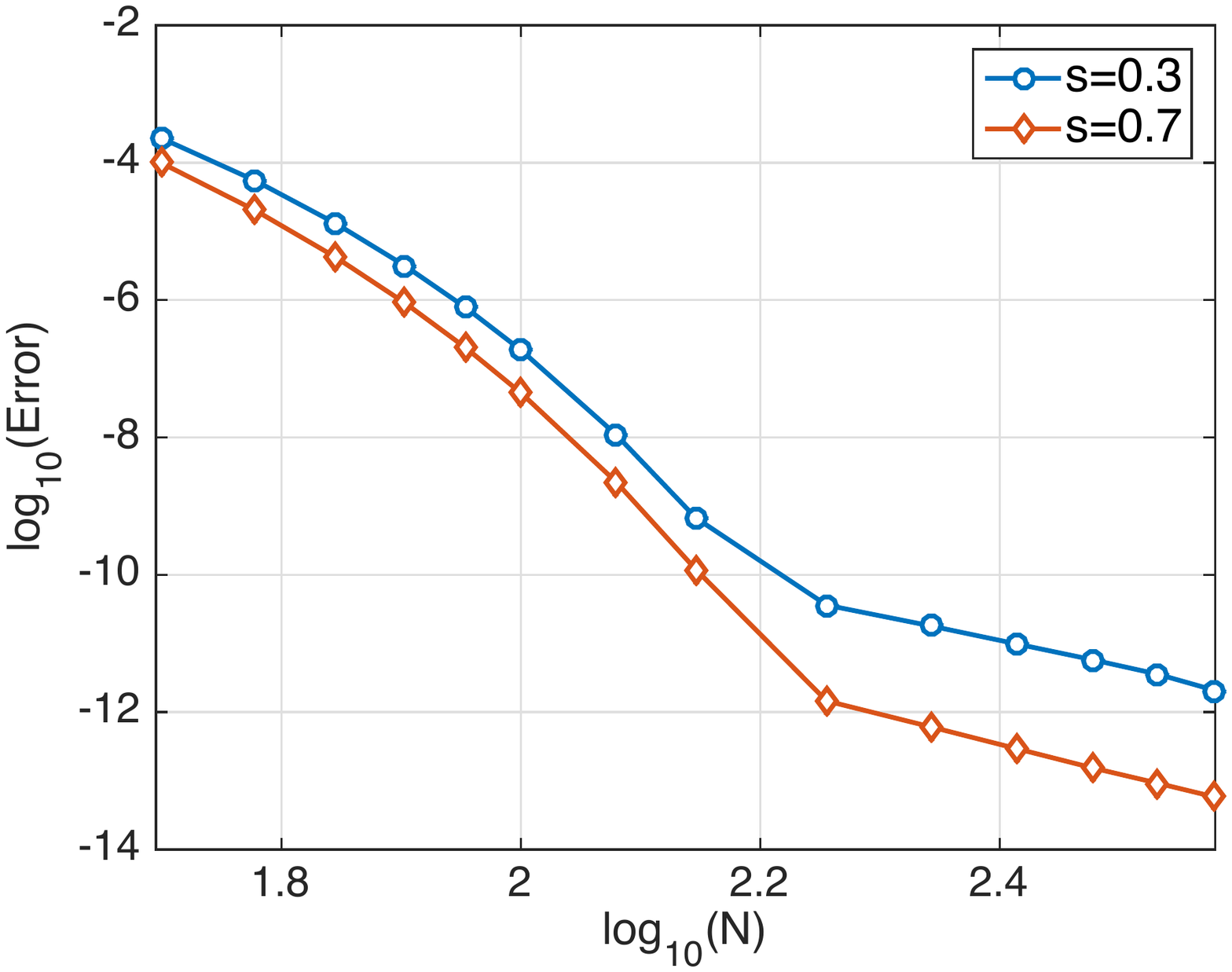}}
\end{minipage}}%\vskip -5pt
\caption
{\small  (a)-(c): A comparison of $L^2$-errors between our method and Hermite-Galerkin method in \cite{mao2017hermite} for different source function $f(x)$;
   (d)-(f): The maximum errors for different source function $f(x)$ with $d=2,3$. In the tests, we take and $\gamma=1, \nu=2.5$.}\label{Example3}
\end{figure}

%provides more accurate
%numerical results. %for the same degree of the polynomial $N$.

%\begin{figure}[!h]
%\subfigure[]{
%\begin{minipage}[t]{0.45\textwidth}
%\centering
%\rotatebox[origin=cc]{-0}{\includegraphics[width=2.6in,height=2.4in]{fig1d_f3.pdf}}
%\end{minipage}}
%\subfigure[]{
%\begin{minipage}[t]{0.45\textwidth}
%\centering
%\rotatebox[origin=cc]{-0}{\includegraphics[width=2.6in,height=2.4in]{fig1d_f4.pdf}}
%\end{minipage}}\vskip -10pt
%\caption
%{\small (a). A comparison of $L^2$-error between our method and Hermite-Galerkin method \cite{mao2017hermite} for  $f(x)=(1+x)e^{-\frac{x^2}{2}}$ with $\nu=2.5$
%(b). A comparison of $L^2$-error between our method and Hermite-Galerkin method \cite{mao2017hermite} for $f(x)=(1+x^2)^{-2}$ with $\nu=10/7$.}\label{fig1df1}
%\end{figure}

%
\vspace*{-10pt}
\begin{table}[h!tbp]
\centering
\caption{\small A comparison of $L^2$-error for $f(x)=(1+x)e^{-\frac{x^2}{2}}$.}\small
\begin{tabular}{||c||c|c|c|c||c|c|c|c||}\hline
&\multicolumn{4}{c||}{$s=0.6$}  & \multicolumn{4}{c||}{$s=0.9$} \\
\cline{2-5}\cline{6-9}
$N$ &Hermite  \cite{mao2017hermite} & Order & MCF & Order & Hermite \cite{mao2017hermite}  &Order  &MCF  & Order\\
\hline
$80$&2.77e-04 &         &6.29e-05&        &2.06e-05&        &2.21e-06 &\\
$100$&2.21e-04&1.01&4.38e-05&1.61&1.53e-05&1.32&1.35e-06 &2.20\\
$120$&1.84e-04&1.02&3.26e-05&1.61&1.20e-05&1.33&9.10e-07&2.19\\
$140$&1.57e-04&1.03&2.56e-05&1.57&9.80e-06&1.33&6.51e-07&2.18\\
$160$&1.36e-04&1.03&2.08e-05&1.52&8.20e-06&1.34&4.86e-07&2.18\\
$180$&1.21e-04&1.04&1.75e-05&1.49&7.00e-06&1.34&3.75e-07&2.20\\
$200$&1.08e-04&1.04&1.49e-05&1.49&6.08e-06&1.34&2.96e-07&2.24\\
$220$&9.79e-05&1.05&1.29e-05&1.52&5.35e-06&1.34&2.38e-07&2.29\\
$240$&8.93e-05&1.06&1.12e-05&1.58&4.76e-06&1.35&1.94e-07&2.35\\
\hline
\end{tabular}\label{tab1}\end{table}

\begin{example}\label{3dmulti} {\bf (Multi-term fractional equations).}
Consider the three-dimensional  multi-term fractional Laplacian equation
\begin{equation}
\label{multieq}
\sum_{j=1}^{J} \rho_j(-\Delta)^{s_j}u(x) = f(x),\;\;\;  {\rm in}\;\;  \mathbb{R}^3; \quad
  u(x)=0 \;\;\;  {\rm as}\;\;  |x|\to \infty.
\end{equation}
%we take $f(x_1,x_2,x_3)=(1+x_1+2x_2^2+3x_3^2)e^{-\frac{x_1^2+x_2^2+x_3^2}{2}}$ and $f(x_1,x_2,x_3)=(1+x_1^2+2x_2^2+3x_3^2)^{-2.7}$ as  example.  The exact solution is unknown, we use
%the numerical solution with $N=600$ as the reference solution.
In Figure \ref{fig3dmulti} (a), we plot in log-log scale  the maximum errors of \eqref{multieq} against various $N$, where we take  $u(x)=(1+|x|^2)^{-\frac{3\pi}{4}}$, $J=4$ and
\begin{equation}\label{parasr}\begin{split}
&s_1=0.77,\;\;s_2=0.33,\;\;s_3=0.21,\;\;s_4=0, \;\; \rho_1=1,\;\;\rho_2=2,\;\;\rho_3=\sqrt{2},\;\;\rho_4=1.
\end{split}\end{equation}
In Figure \ref{fig3dmulti} (b), we plot in log-log scale the maximum errors of \eqref{multieq} against various $N$, where we take  $f(x)=(1+x_1+2x_2^2+3x_3^2)e^{-\frac{|x|^2}{2}}$, $J=4$ and
\begin{equation}\label{parasr2}\begin{split}
&s_1=0.76,\;\;s_2=0.41,\;\;s_3=0.23,\;\;s_4=0,\;\; \rho_1=2,\;\;\rho_2=1,\;\;\rho_3=0.5,\;\;\rho_4=1.
\end{split}\end{equation}
We observe the algebraic decay of the errors, and the method is as accurate and efficient as  the previous cases.
%They indicate that the numerical errors decay algebraically.
\end{example}
\vspace*{-18pt}
  \begin{figure}[!h]
\subfigure[$d=3$ with given exact solution]{
\begin{minipage}[t]{0.45\textwidth}
\centering
\rotatebox[origin=cc]{-0}{\includegraphics[width=0.9\textwidth]{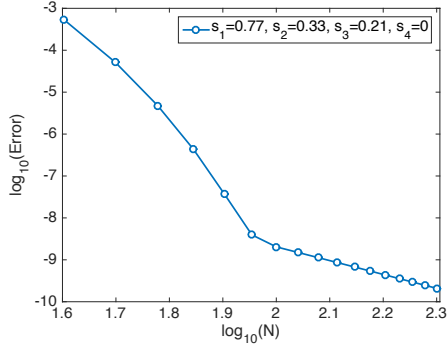}}
\end{minipage}}
\subfigure[$d=3$ with given source term]{
\begin{minipage}[t]{0.45\textwidth}
\centering
\rotatebox[origin=cc]{-0}{\includegraphics[width=0.9\textwidth]{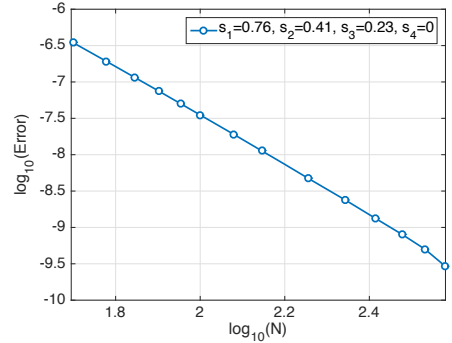}}
\end{minipage}}\vskip -10pt
\caption
{\small (a). The maximum error for \eqref{multieq} with $u(x)=(1+|x|^2)^{-\frac{3\pi}{4}}$ and $\nu=2.5$; (b). The maximum error for \eqref{multieq} with $f(x)=(1+x_1+2x_2^2+3x_3^2)e^{-\frac{|x|^2}{2}}$ and $\nu=2.5$.}\label{fig3dmulti}
\end{figure}

\section{MCF approximation of  nonlinear fractional Schr{\"o}dinger equations}\label{section4}
In this section, we apply the  fast algorithm to some nonlinear PDEs involving fractional Laplacian.
As an example,  we consider  nonlinear fractional Schr{\"o}dinger equation (fNLS) (cf. \cite{klein2014numerical}):
\begin{equation}
\label{fNLS}\begin{split}
&{\rm i}\psi_t=\frac{1}{2}(-\Delta)^s\psi+\gamma|\psi|^{2p}\psi,\quad x\in \R^d,\quad t\in(0,T],\\&
\psi(x,0)=\psi_0(x),\quad x\in
 \R^d, \quad |\psi|\to 0,\quad |x|\to \infty, \end{split}\end{equation}
 where ${\rm i}^2=-1$, $\psi(x,t)$ is a complex-valued wave function, the parameters $\gamma$ and $p$ are real constants, and $\psi_0$ is  given. It is noteworthy  that the mass is conserved  (cf. \cite{bao2005fourth,klein2014numerical}):
\begin{equation}
\label{conserve}
M(t)=\dint_{\mathbb{R}^d}|\psi(x,t)|^2{\rm d}x=M(0),\quad t>0.
\end{equation}

% We first present a fourth-order time-splitting method for following multi-dimensional nonlinear fractional Schr{\"o}dinger equation (fNLS) presented below.
%Then, we will study the standing wave solution of fNLS.

%We now consider following

\subsection{The scheme}  We adopt the time-splitting technique, %We now provide a detailed implementation of TS4 for \eqref{fNLS}.
and  start with rewriting the fNLS \eqref{fNLS} as follows
\begin{equation}
\label{splifNLS}
\ri \psi_t=A\psi+B\psi,
\end{equation}
where
$$A\psi=\gamma|\psi(x,t)|^2\psi(x,t),\quad B\psi=\dfrac12(-\Delta)^s\psi(x,t).$$
The notion of time-splitting is to  solve the following
two subproblems:
\begin{equation}
\label{spliA}
\ri\dfrac{\partial\psi(x,t)}{\partial t}=A\psi(x,t)=\gamma|\psi(x,t)|^2\psi(x,t),\quad x\in\R^d,
\end{equation}
and
\begin{equation}
\label{spliB}
\ri\dfrac{\partial\psi(x,t)}{\partial t}=B\psi(x,t)=\dfrac12(-\Delta)^s\psi(x,t),\quad x\in\R^d.
\end{equation}
The essence of the splitting method is to solve the two sub-problems iteratively  at each time step.

(i). We first consider the subproblem \eqref{spliA}.
Multiplying \eqref{spliA} by ${\bar \psi(x,t)}$, we find from  the resulted equation that $|\psi(x,t)|$ invariant in $t$ (see e.g., \cite{bao2005fourth}). More precisely, for $t\geq t_s$ ($t_s$ is any given time),
\eqref{spliA} becomes
\begin{equation}
\label{spliA1}
\ri \dfrac{\partial\psi(x,t)}{\partial t}=\gamma|\psi(x,t_s)|^2\psi(x,t),\quad t\geq t_s,\quad x\in\R^d,
\end{equation}
which can be integrated exactly, i.e.,
\begin{equation}
\label{spliA2}
\psi(x,t)=e^{-\ri\gamma|\psi(x,t_s)|^2(t-t_s)}\psi(x,t_s),\quad t\geq t_s,\quad x\in\R^d.
\end{equation}

(ii). We now turn to the subproblem \eqref{spliB}.  Remarkably, the Fourier-like basis can diagonalise  the operator $B$ so that $e^{-\ri B\Delta t}\psi$ can be efficiently evaluated (which is crucial  for the final scheme to be time reversible and time transverse invariant). More precisely,  we seek $\psi_N(x,t)\in \mathbb{V}_{\!N}^d$ as an approximate solution to  \eqref{spliB}, such that
\begin{equation}
\label{splifNLS2}
 {\rm i}\big(\partial_t\psi_N,v\big)_{L^2(\mathbb{R}^d)}=\big(B\psi_N,v\big)_{L^2(\mathbb{R}^d)}= \dfrac12 \big( (-\Delta)^s\psi_N,v\big)_{L^2(\mathbb{R}^d)},\quad \forall\, v\in \mathbb{V}^d_{\!N}.
\end{equation}
Using the Fourier-like MCF basis, we write
\begin{equation}
\label{solufNLS}
\psi_N(x,t)=\dsum_{k\in \Upsilon_{\!N}}\hat{\psi}_k(t)\widehat{\mathbb{T}}_k(x),\quad x\in \R^d.
\end{equation}
%Note that $\displaystyle\lim_{|x|\rightarrow\infty}\widehat{\mathbb{T}}_n(x)=0$ (cf. \cite{szeg1939orthogonal}), so the decaying condition $\displaystyle\lim_{|x|\rightarrow\infty}\psi_N(x,t)=0$ is automatically satisfied.
Substituting it into \eqref{splifNLS2}, and taking the inner product with $\widehat{\mathbb{T}}_m(x)$,  we deduce from  \eqref{newtheorem}  that
\begin{equation}
\label{splitODE2}
\ri\dfrac{\partial\hat{\psi}_m(t)}{\partial t}=\dfrac12|\lambda_m|_1^s\hat{\psi}_{m}(t),\quad m\in \Upsilon_{\!N}.
\end{equation}
Then, we derive from \eqref{splitODE2} that the solution for \eqref{splifNLS2}, i.e., the numerical solution of \eqref{spliB}, is given by
\begin{equation}
\label{coefmatr}
\begin{split}
\psi_N(x,t)=e^{-\ri B(t-t_s)}\psi_N(x,t_s)=\dsum_{k\in \Upsilon_{\!N}}e^{-\frac{\ri}{2}|\lambda_k|_1^s(t-t_s)}
\hat{\psi}_{k}(t_s)\widehat{\mathbb{T}}_k(x),\quad t\geq t_s.
\end{split}\end{equation}
%which evolves from $t_s$ to $t$, if $\{\hat{\psi}_{k}(t_s)\}_{k\in \Upsilon_{N}}$ are pre-computed.

%{\it % {\color{red} \bf  Please make the following algorithm clear!}
With the exact solution \eqref{spliA2} and the approximate solution \eqref{coefmatr} for two subproblems \eqref{spliA} and \eqref{spliB}, respectively,
we now describe the implementation of the fourth-order time splitting method (TS4) for solving \eqref{fNLS}.  Let $\{x_p\}_{p\in \Upsilon_{\!N}}$ be tensorial grids as in \eqref{inter2}, and $t_n=n\Delta t$ be the time-stepping grids.  Let $\psi^n_{p}$ be the approximation of $\psi(x_p,t_n),$ and denote by $\bs \psi^n$  the solution vector with components $\{\psi^n_{p}\}_{p\in \Upsilon_{\!N}}$.
For notational convenience, we define the solution map related to
\eqref{coefmatr}:
\begin{equation}
\label{defiF}\begin{split}
&\mathcal{T}_{N}[\omega; \bs \varPsi_p](x)=\dsum_{k\in \Upsilon_{\!N}}e^{-\ri\omega|\lambda_k|^s\Delta t}\, \hat{\varPsi}_k\,  \widehat{\mathbb{T}}_k(x),
\end{split}\end{equation}
where $\{\hat \varPsi_k\}$ are the MCF expansion coefficients computed from the sampling of
 $\varPsi\in {\mathbb V}_{\!N}^d$ on the grids $\{x_p\},$ and $\omega>0$ is some weight.

%where $\hat{U}_k$ denote the coefficient of $U$, namely, $\pi_N^dU=\dsum_{k\in \Upsilon_{\!N}}\hat{U}_{k}\widehat{\mathbb{T}}_k(x)$.
Following \cite{bao2005fourth}, we carry out  the fourth-order time-splitting method for the fNLS \eqref{fNLS}, from time $t=t_n$ to  $t=t_{n+1}$, as follows
\begin{equation}\label{schespli4}
\begin{cases}
 \psi^{(1)}_{p}=e^{-2\ri \omega_1\gamma \Delta t |\psi^n_{p}|^2}\psi^n_{p},\quad
&\psi^{(2)}_{p}=\mathcal{T}_{N}[\omega_2; \bs \psi^{(1)}_p](x_p),\\[4pt]
\psi^{(3)}_{p}=e^{-2\ri\omega_3 \gamma \Delta t~|\psi^{(2)}_{p}|^2}\psi^{(2)}_{p},\quad
&\psi^{(4)}_{p}=\mathcal{T}_{N}[\omega_4; \bs \psi^{(3)}_p](x_p),\\[4pt]
\psi^{(5)}_{p}=e^{-2\ri\omega_3 \gamma \Delta t |\psi^{(4)}_{p}|^2}\psi^{(4)}_{p},\quad
&\psi^{(6)}_{p}=\mathcal{T}_{N}[\omega_2,\bs \psi^{(5)}_p](x_p),\\
\psi^{n+1}_{p}=e^{-2\ri \omega_1\gamma\Delta t~|\phi^{(6)}_{p}|^2}\psi^{(6)}_{p},\quad
&\forall \, p\in \Upsilon_{\!N},
\end{cases}
\end{equation}
where the weights are given by
\begin{equation}
\label{schespli41}
\begin{array}{ll}
\omega_1=0.33780~17979~89914~40851,\quad&\omega_2=0.67560~35959~79828~81702,\\[3pt]
\omega_3=-0.08780~17979~89914~40851,\quad&\omega_4=-0.85120~71979~59657~63405.
\end{array}\end{equation}
% }

To show the stability of fourth-order splitting method, we further define
\begin{equation}
\label{e1}
\|\psi^n\|^2_{N}=\sum_{j\in\Upsilon_{\!N}}|\psi_{j}^n|^2\omega_j:=\sum_{j_1=0}^{N_1}\cdots\sum_{j_d=0}^{N_d}\psi(x_{j_1},\cdots,x_{j_d})\omega_{j_1}\cdots\omega_{j_d},
\end{equation}
where $\psi^n_j=\psi^n(x_j)$,
and  $\{x_j,\omega_j\}_{j\in\Upsilon_{\!N}}$ are  the corresponding tensorial nodes and weights as in \eqref{mapnodes}. Following \cite[Lemma 3.1]{bao2005fourth}, we can show the property stated below.
\begin{thm}
\label{lmstab}
%Let %$\breve{\psi}(y)=\psi(x(y))\omega(y)$ and $\breve{\psi}_j=\frac{\psi(x)}{g(x)}\big|_{x_j=\frac{y_j}{\sqrt{1-y_j^2}}},~j\in\Upsilon_{\!N}$.
The $TS4$ has the normalisation conservation, i.e.,
\begin{equation}
\label{defiFs}
\|\psi^n\|^2_{N}=\sum_{j\in\Upsilon_{\!N}}|\psi_{j}^n|^2\omega_j=\sum_{j\in\Upsilon_{\!N}}|\psi_0(x_j)|^2\omega_j=\|\psi_0\|^2_{N},\quad n\geq0.
\end{equation}
%where $\psi^n_j=\psi^n(x_j)$ and $\{x_j,\omega_j\}_{j\in\Upsilon_{\!N}}$ defined in \eqref{mapnodes} with $d$-dimensional case.
\end{thm}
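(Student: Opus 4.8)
The plan is to show that each of the seven substeps in the scheme \eqref{schespli4} leaves the discrete quantity $\sum_{j\in\Upsilon_{\!N}}|\psi_j|^2\omega_j$ unchanged, and then to obtain \eqref{defiFs} by composing the seven substeps over one time step and inducting on $n$. The scheme alternates between nonlinear substeps, which are pointwise multiplications by a unit-modulus factor coming from the exact integration \eqref{spliA2} of the $A$-subproblem, and linear substeps, which apply the propagator $\mathcal{T}_N$ of \eqref{defiF} coming from the $B$-subproblem. I would treat the two kinds of substeps separately, since each preserves the norm for a different reason.

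For a nonlinear substep, every update in \eqref{schespli4} has the form $\psi_p^{\mathrm{new}}=e^{-\ri\theta_p}\psi_p^{\mathrm{old}}$ with $\theta_p=2\omega\gamma\Delta t\,|\psi_p^{\mathrm{old}}|^2\in\mathbb{R}$. Since $|e^{-\ri\theta_p}|=1$, we have $|\psi_p^{\mathrm{new}}|=|\psi_p^{\mathrm{old}}|$ at every grid point $x_p$, so directly from the definition \eqref{e1} the sum $\sum_{j\in\Upsilon_{\!N}}|\psi_j|^2\omega_j$ is unchanged. This substep is immediate and requires nothing beyond the real-valuedness of the exponent.

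The linear substeps are where the actual work lies, and the crucial bridge is the identity $\|\varPhi\|_N^2=\|\varPhi\|_{L^2(\mathbb{R}^d)}^2=\sum_{k\in\Upsilon_{\!N}}|\hat\varPhi_k|^2$, valid for every $\varPhi\in\mathbb{V}_{\!N}^d$ with Fourier-like coefficients $\hat\varPhi_k$. The first equality follows from the exactness of the mapped Chebyshev--Gauss quadrature \eqref{quad2}: writing $\varPhi=\prod_j G(y_j)\,\breve\varPhi$ with $\breve\varPhi$ a polynomial of degree at most $N$ in each variable $y_j$, the change of variables preceding \eqref{quad2} turns $\int_{\mathbb{R}^d}\varPhi\overline{\varPhi}\,\mathrm{d}x$ into $\int_{\Lambda^d}\breve\varPhi\,\overline{\breve\varPhi}\,\omega\,\mathrm{d}y$, where $\breve\varPhi\,\overline{\breve\varPhi}\in\mathbb{P}_{2N}$ in each variable and is therefore integrated exactly by the tensorized $(N+1)$-point rule. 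The second equality is Parseval's identity for the $L^2$-orthonormal tensorial basis $\{\widehat{\mathbb{T}}_k\}$ established in \eqref{pqN}. Now a linear substep sends the grid data to the samples of $\mathcal{T}_N[\omega;\,\cdot\,]=\sum_{k\in\Upsilon_{\!N}}e^{-\ri\omega|\lambda_k|_1^s\Delta t}\hat\varPsi_k\widehat{\mathbb{T}}_k$, where $\hat\varPsi_k$ are the coefficients of the $\mathbb{V}_{\!N}^d$-interpolant of the incoming grid values. Because each factor $e^{-\ri\omega|\lambda_k|_1^s\Delta t}$ has modulus one, the new coefficients satisfy $|e^{-\ri\omega|\lambda_k|_1^s\Delta t}\hat\varPsi_k|=|\hat\varPsi_k|$, and the bridge identity gives $\|\mathcal{T}_N[\omega;\,\cdot\,]\|_N^2=\sum_k|\hat\varPsi_k|^2$, equal to the discrete norm of the incoming data.

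Combining, each of the seven substeps in \eqref{schespli4} preserves $\|\cdot\|_N$, whence $\|\psi^{n+1}\|_N=\|\psi^n\|_N$, and an induction on $n$ yields \eqref{defiFs}. The main obstacle I anticipate is the careful justification of the bridge identity $\|\varPhi\|_N^2=\sum_k|\hat\varPhi_k|^2$: one must ensure that at each stage the iterate is consistently identified with its interpolant in $\mathbb{V}_{\!N}^d$, so that the grid-based norm \eqref{e1} and the coefficient-based expression refer to the same object, and that the degree count $\breve\varPhi\,\overline{\breve\varPhi}\in\mathbb{P}_{2N}$ indeed falls within the exactness range of the quadrature \eqref{quad2}. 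Once this equality is secured, the unit-modulus nature of both the nonlinear phase and the linear propagator in \eqref{coefmatr} makes the conservation transparent.
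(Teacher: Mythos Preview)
Your argument is correct and is precisely the approach the paper has in mind: the paper does not spell out a proof but simply writes ``Following \cite[Lemma 3.1]{bao2005fourth}, we can show the property stated below,'' and the proof in that reference proceeds exactly as you outline---pointwise unit-modulus phase factors for the nonlinear substeps, and the combination of quadrature exactness \eqref{quad2} with the orthonormality \eqref{pqN} (your ``bridge identity'') for the linear propagator substeps. Your identification of the one nontrivial point, namely that after each nonlinear substep the grid data must be viewed through its $\mathbb{V}_{\!N}^d$-interpolant so that the discrete norm and the coefficient sum coincide, is exactly right and is the only place requiring care.
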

\subsection{Numerical results.}
In the computation, we take $d=2$, and the initial condition to be
\begin{equation}
\label{exam1}\begin{split}
\psi_0(x_1,x_2)={\rm sech}(x_1){\rm sech}(x_2)\exp(\ri (x_1+x_2)),\quad  (x_1,x_2)\in \R^2.
\end{split}\end{equation}
In order to test the fourth-order accuracy in time of the TS4 method,
we compute a numerical solution with focusing case $\gamma=-1$, $s=0.7$, a very fine mesh, e.g., $N=300$, and a very small time step  $\Delta t=0.0001$, as the ``exact'' solution $\psi$. Let $\psi^{\Delta t}$ be the numerical solution with $N=300$ and time step side $\Delta t$. %Due to $M$ being large enough, the truncation error from space discretization is negligible compared to that from time discretization.
Table \ref{tab2} lists the maximum error and  $L^2$-error at $T=2$ for different time step size $\Delta t$. The results in Table \ref{tab2} demonstrate the fourth-order accuracy in time of the TS4 method \eqref{schespli4}. % {\color{red}\bf  Need to list the order of the accuracy!}

In Figure \ref{fig5}, we plot the maximum errors and $L^2$-error versus space discretization $N$ and time discretization $\Delta t$.
They indicate that the numerical errors decay algebraically as $N$ increases/or $\Delta t$ decreases.

\begin{table}[h!tbp]
\begin{center}
\caption{{\small Time discretization errors for the TS4 method \eqref{schespli4} at $T=2$ with $N=300$.}}\small
\centering{
\begin{tabular}{c c c c c c c c}\hline
$\Delta t$&1/10&1/20&1/40&1/80&1/160&1/320 \\%&1/640\\
\hline \hline
$\max$-error    &1.059e-02&1.092e-03&8.747e-05&5.782e-06&3.641e-07&2.301e-08\\%& 1.681e-009\\
   order             &      $-$       & 3.2             &3.6             &3.9             &4.0             &3.9\\
   \hline
$L^2$-error &2.557e-03& 2.235e-04&1.616e-05&1.084e-06&6.553e-08&4.435e-09\\%&3.807e-010\\
order           &      $-$       & 3.5             &3.7             &3.9             &4.0             &3.9\\
\hline
\end{tabular}}\label{tab2}
\end{center}
\end{table}

\begin{figure}[!h]
\subfigure[Errors vs. $N$]{
\begin{minipage}[t]{0.45\textwidth}
\centering
\rotatebox[origin=cc]{-0}{\includegraphics[width=0.9\textwidth]{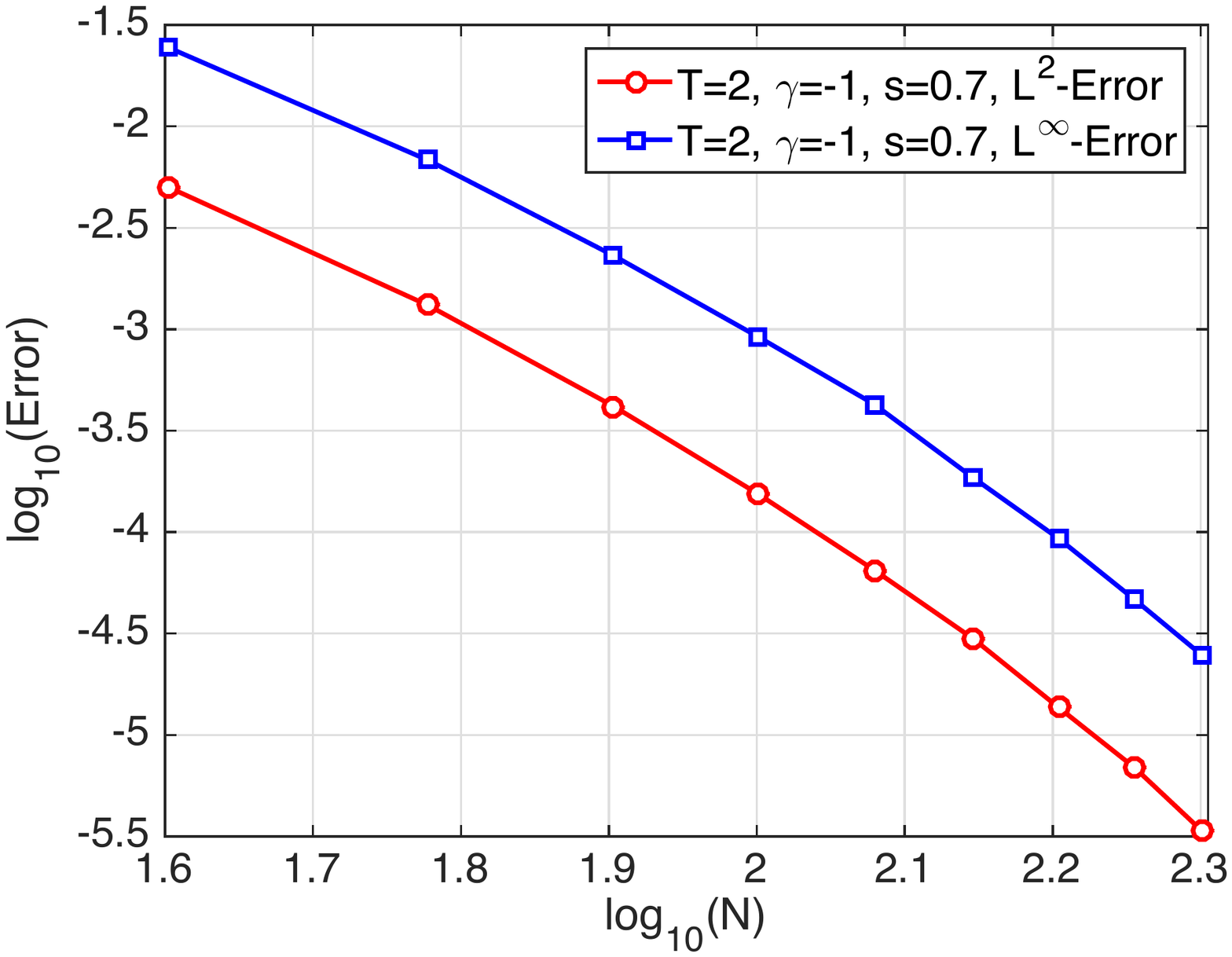}}
\end{minipage}}
\subfigure[Errors vs. $\Delta t$]{
\begin{minipage}[t]{0.45\textwidth}
\centering
\rotatebox[origin=cc]{-0}{\includegraphics[width=0.9\textwidth]{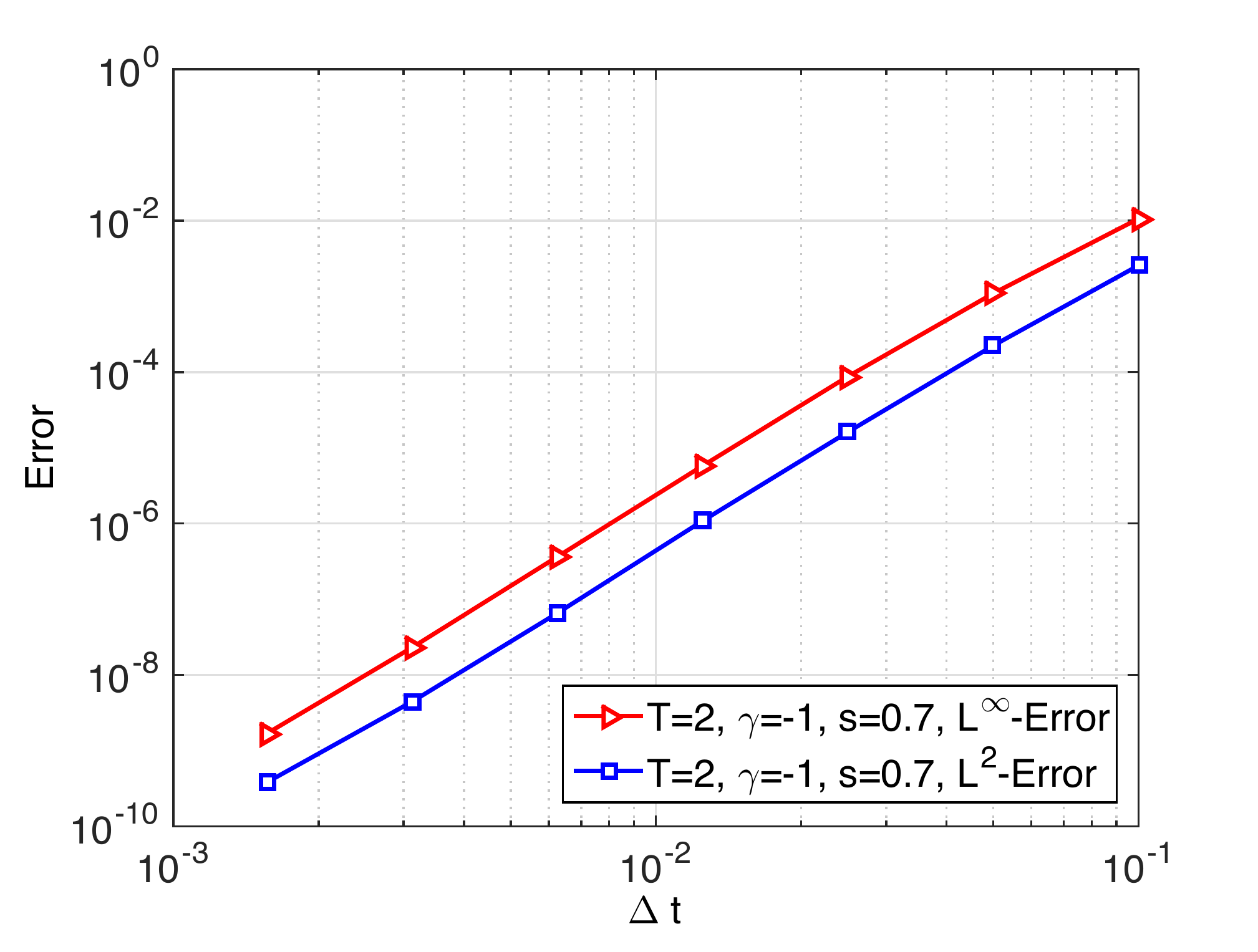}}
\end{minipage}}\vskip -10pt
 \caption
 {\small (a). The numerical error of \eqref{exam1} with  $s=0.7,~\gamma=-1,~T=2$; (b). The numerical error of \eqref{exam1} with  $s=0.7,~\gamma=-1,~T=2$.}\label{fig5}
\end{figure}

%{\color{red}\bf  Put two figures together!}

In Figure \ref{ExamplefNLS1}  (a)-(d), we depict the modulus squared of the numerical solution with defocusing case ($\gamma=1$) obtained by TS4. Here, we take $N=200$, $T=1,2$, and different values of fractional order $s=0.3,0.7$. We observe that the solution diffused as expected.
On the other hand, the blow-up of the solution might happen for focusing case $\gamma=-1$ (cf. \cite{klein2014numerical}). In Figure \ref{ExamplefNLS1}  (e)-(f),  we plot the profiles of the modulus square  of the  numerical solution at $T=1$ with $N=200$ and $s=0.3, 0.7.$ We can observe the expected blow-up phenomenon.

 \begin{figure}[!th]
\subfigure[$T=1$, $s=0.3$ and $\gamma=1$]{
\begin{minipage}[t]{0.42\textwidth}
\centering
\rotatebox[origin=cc]{-0}{\includegraphics[width=0.9\textwidth]{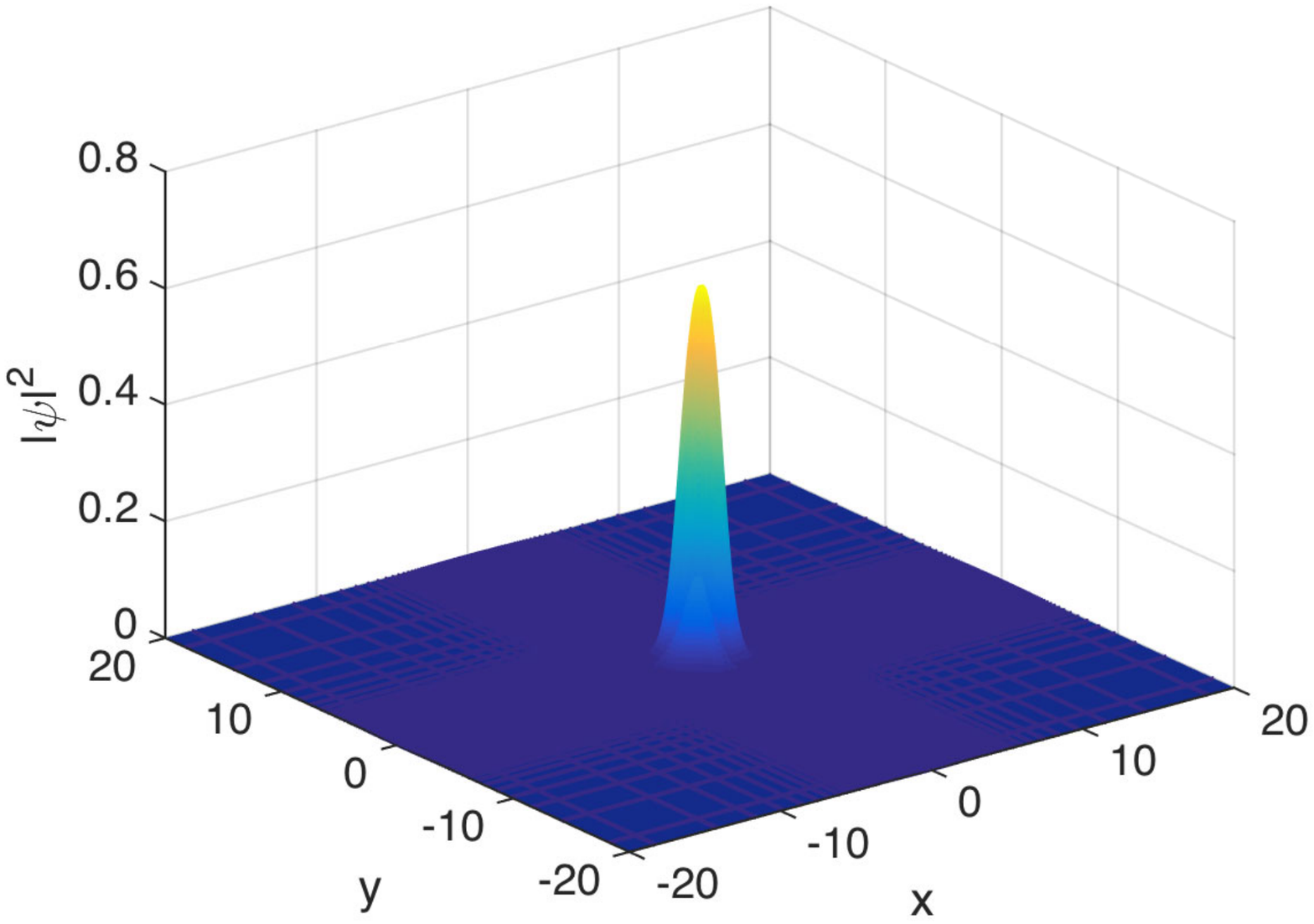}}
\end{minipage}}
\subfigure[$T=2$, $s=0.3$ and $\gamma=1$]{
\begin{minipage}[t]{0.42\textwidth}
\centering
\rotatebox[origin=cc]{-0}{\includegraphics[width=0.9\textwidth]{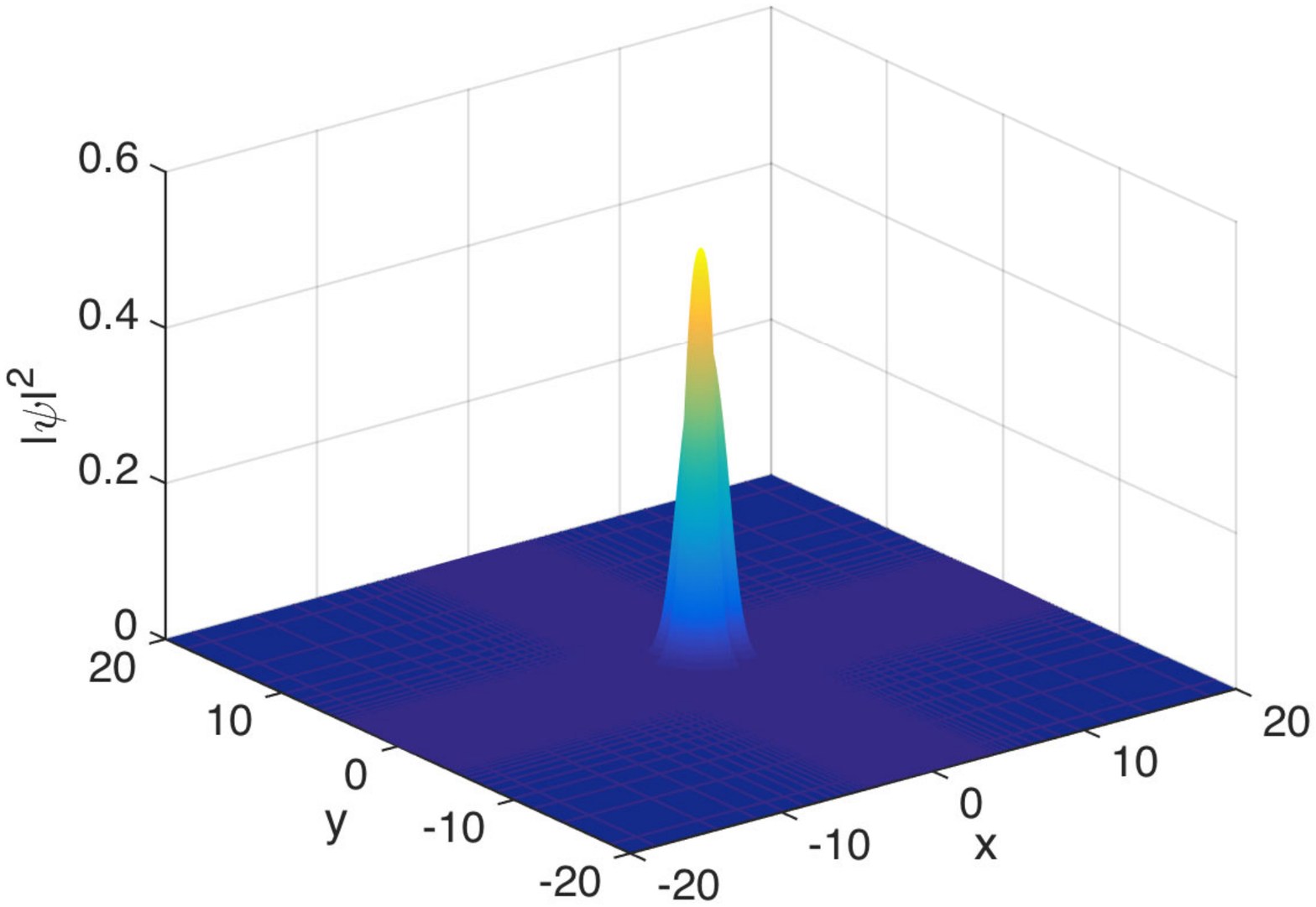}}
\end{minipage}}%\vskip -5pt
%
%\subfigure[$T=1$, $s=0.7$ and $\gamma=1$]{
%\begin{minipage}[t]{0.42\textwidth}
%\centering
%\rotatebox[origin=cc]{-0}{\includegraphics[width=1.1\textwidth]{fig_case_t1_s7.pdf}}
%\end{minipage}}\qquad
%\subfigure[$T=2$, $s=0.7$ and $\gamma=1$]{
%\begin{minipage}[t]{0.42\textwidth}
%\centering
%\rotatebox[origin=cc]{-0}{\includegraphics[width=1.1\textwidth]{fig_case_t2_s7.pdf}}
%\end{minipage}}%\vskip -5pt

\subfigure[$T=1$, $s=0.3$ and $\gamma=-1$]{
\begin{minipage}[t]{0.42\textwidth}
\centering
\rotatebox[origin=cc]{-0}{\includegraphics[width=0.9\textwidth]{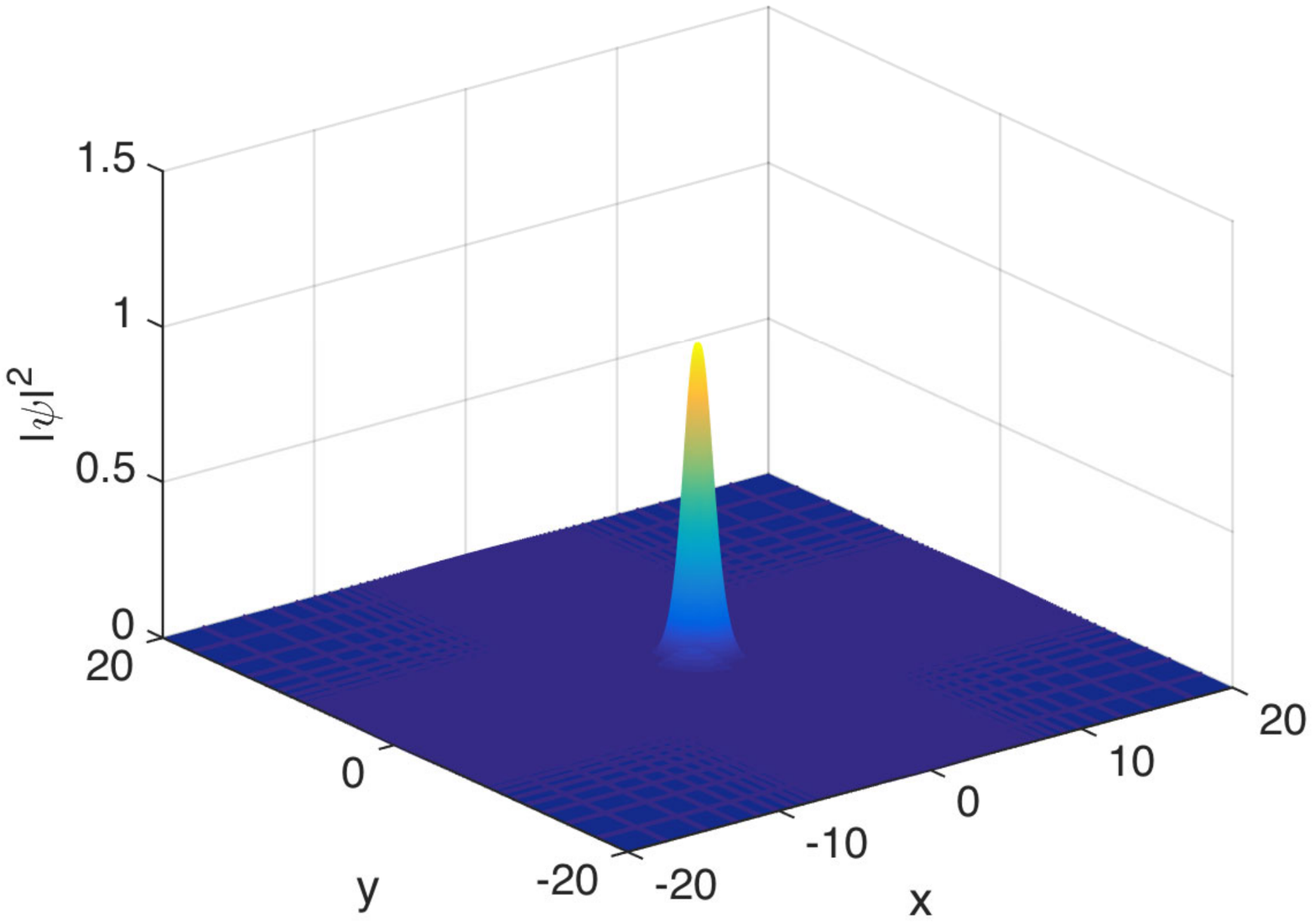}}
\end{minipage}}
\subfigure[$T=1$, $s=0.7$ and $\gamma=-1$]{
\begin{minipage}[t]{0.42\textwidth}
\centering
\rotatebox[origin=cc]{-0}{\includegraphics[width=0.9\textwidth]{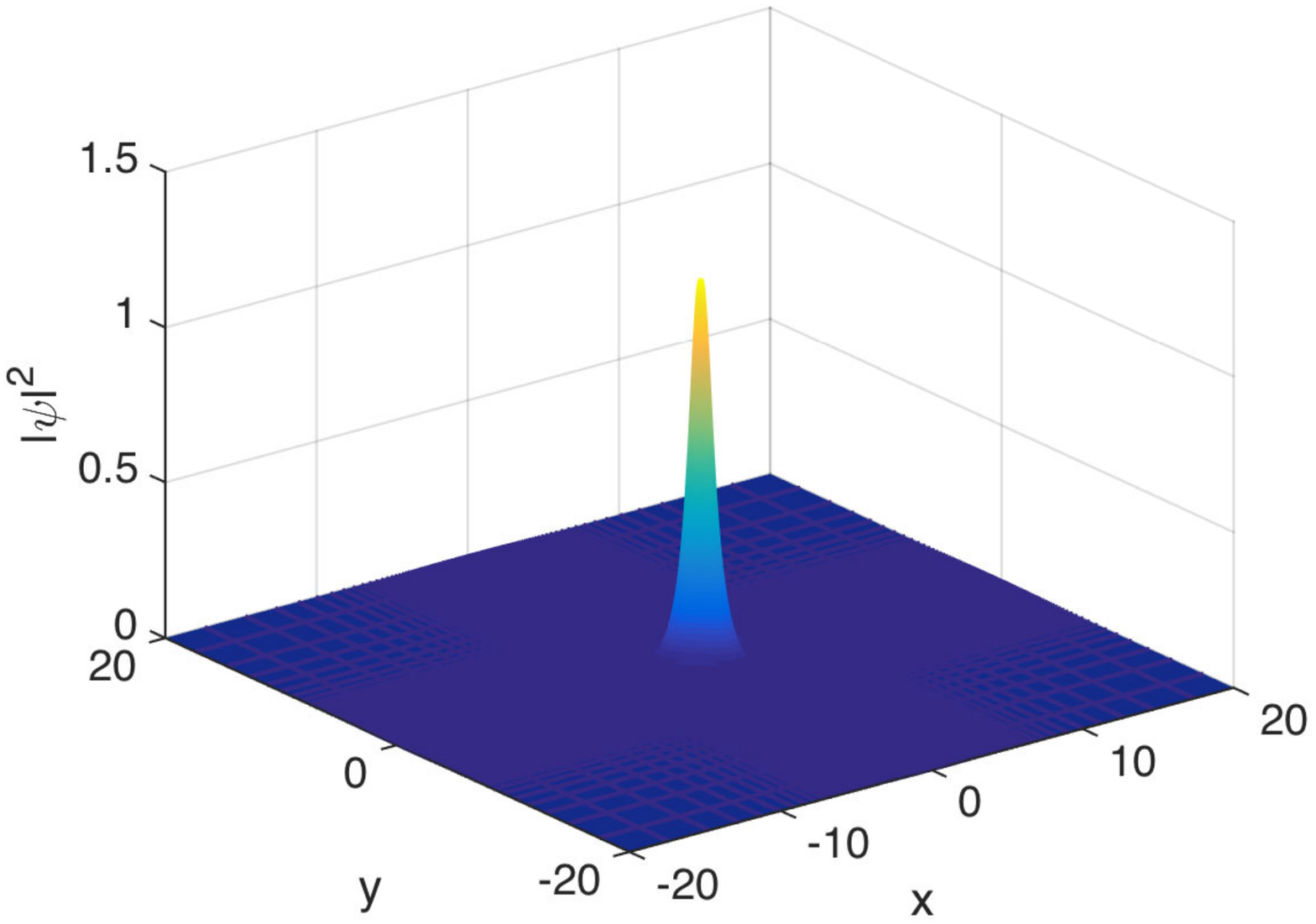}}
\end{minipage}}%\vskip -5pt

\caption
{\small Profiles of the modulus square  of the numerical solutions at different time and with different  fractional orders.}\label{ExamplefNLS1}
\end{figure}

\subsection{Concluding Remarks}
We developed a fast MCF-spectral-Galerkin method for PDEs involving integral fractional Laplacian
in $\mathbb R^d$.  The fast solver is integrated with two critical components: (i) the Dunford-Taylor formulation for the fractional Laplacian; and (ii) Fourier-like bi-orthogonal MCFs as basis functions.
The fast spectral algorithm could achieve a quasi-optimal computational cost.
Different from the existing works on bounded domains (cf. \cite{bonito2019numerical,bonito2019sinc}), the integration  in $t$ is evaluated explicitly,  and the fractional Laplacian can be fully diagonalised under (i) and (ii).  Indeed, the existing approaches for fractional Laplacian in unbounded domains are either too complicated or computational prohibitive even for $d=2.$ However, the fast solver works for any dimension,  and can be easily incorporated with e.g., the hyperbolic cross and sparse grids (cf. \cite{shen2014approximations}) when the dimension is high.

The proposed method can be extended to invert the operator $\mathbb D^s:=(-\Delta +\gamma\mathbb  I)^{s}$ with $s\in (0,1)$ and $\gamma>0.$ In fact, one can verify readily that the Dunford-Taylor formulation in Lemma \ref{DTthm} takes the form
\begin{equation}\label{DTfor--}
\left(\mathbb D^{\frac{s}2} u,\mathbb D^{\frac{s}2} v\right)_{L^2(\mathbb{R}^d)}=C_s \int_0^\infty t^{1-2s} \int_{\mathbb R^d}
 \big((-\Delta+\gamma\mathbb I)\big(\mathbb I+t^2 (-\Delta+\gamma\mathbb I)\big)^{-1}  u \big)(x)\,  v(x)\, {\rm d}x\,{\rm d}t.
\end{equation}
Then the fast algorithm in Theorem \ref{newtheorem} is extendable to this case straightforwardly.

%The proposed method employs Fourier-like basis which leads to full diagonal scheme for multi-dimensional case.  The main contributions of this paper are: (i) a novel and efficient spectral method based on MCFs is introduced to solve the linear system directly; (ii) the proposed method is extended to solve nonlinear fNLS.

\begin{appendix}

\renewcommand{\theequation}{A.\arabic{equation}}
\section{Proof of Proposition \ref{case1:expo}} \label{AppendixA}

% \begin{proof}
 The results with $d=1$ were derived in \cite{tang2018hermite}, so it suffices to prove them for integer $d\geq 2$. Note that
\begin{equation*}
\begin{split}
\mathscr{F}\big\{e^{-|x|^2}\big\}(\xi) & =\frac{1}{(2\pi)^{d/2}}\int_{\mathbb{R}^d}e^{-|x|^2}e^{-\textmd{i}x\cdot\xi}{\rm d}x\\
&=\frac{1}{(2\pi)^{d/2}}\int_{\mathbb{R}}e^{-x_{1}^2}e^{-\textmd{i}x_{1}\xi_{1}}{\rm d}x_{1}\cdots\int_{\mathbb{R}}e^{-x_{d}^2}e^{-\textmd{i}x_{d}\xi_{d}}{\rm d}x_{d}=\frac{1}{2^{d/2}}e^{-\frac{|\xi|^2}{4}},
\end{split}
\end{equation*}
where we used the identity (cf.  \cite[P. 339]{Gradshteyn2015Book}):
\begin{equation*}
\int_{\mathbb{R}}e^{-x^2}e^{-\textmd{i}x\xi}{\rm d}x=\sqrt{\pi} e^{-\frac{\xi^2}{4}}.
\end{equation*}
Thus from the definition \eqref{Ftransform}, we obtain
\begin{equation}\label{comeIr}
\begin{split}
(-\Delta)^{s}\big\{e^{-|x|^2}\big\}(x)&=\mathscr{F}^{-1}\Big\{|\xi|^{2s}\mathscr{F}\big\{e^{-|x|^2}\big\}(\xi)\Big\}=\frac{1}{2^{d/2}(2\pi)^{d/2}}\!\int_{\mathbb{R}^d}|\xi|^{2s}
e^{-\frac{|\xi|^2}{4}}e^{\textmd{i}x\cdot\xi}{\rm d}\xi\\
&=\frac{2^d}{2^{d/2}(2\pi)^{d/2}}\int_{\mathbb{R}_{+}^d}|\xi|^{2s}
e^{-\frac{|\xi|^2}{4}}\cos(x_{1}\xi_{1})\cos(x_{2}\xi_{2})\cdots\cos(x_{d}\xi_{d}){\rm d}\xi.
%&=\frac{1}{\pi^{d/2}}\int_{0}^{\infty}{r^{2s+d-1}}e^{-\frac{r^2}{4}}\mathcal{I}(r; x){\rm d}r.
\end{split}
\end{equation}
We proceed with the calculation by  using the $d$-dimensional spherical coordinates:
\begin{equation}\label{d_sphere}
\begin{split}
&\xi_{1}=r\cos\theta_{1};\; \xi_{2}=r\sin\theta_{1}\cos\theta_{2};\;\cdots\cdots; \;\xi_{d-1}=r\sin\theta_{1}\cdots\sin\theta_{d-2}\cos\theta_{d-1}; \\ &\xi_{d}=r\sin\theta_{1}\cdots\sin\theta_{d-2}\sin\theta_{d-1},\quad r=|\xi|,
%\quad r=|\xi|,\;\; \theta_1,\cdots,\theta_{d-2}\in [0,\pi], \;\; \theta_{d-1}\in [0,2\pi].
\end{split}
\end{equation}
so we can write
\begin{equation}\label{comeIr00}
\begin{split}
(-\Delta)^{s}\big\{e^{-|x|^2}\big\}(x)=\frac{1}{\pi^{d/2}}\int_{0}^{\infty}{r^{2s+d-1}}e^{-\frac{r^2}{4}}\,\mathcal{I}(r; x){\rm d}r,
\end{split}
\end{equation}
where
\begin{equation*}
\begin{split}
&\mathcal{I}(r; x)=\!\!\int_{[0, \frac \pi 2]^{d-1}}
%^{\frac{\pi}{2}}\!\!\int_{0}^{\frac{\pi}{2}}\!\cdots\int_{0}^{\frac{\pi}{2}}\!\
\!\cos\!\big(r x_{1}\!\cos{\theta_{1}}\big)\cos\!\big(r x_{2}\!\sin\theta_{1}\!\cos\theta_{2}\big)\!\cdots\cos\!\big(r x_{d-1}\!\sin\theta_{1}\cdots\sin\theta_{d-2}\!\cos\theta_{d-1}\big)\\
&\quad\quad \cos\!\big(r x_{d}\sin\theta_{1}\cdots\sin\theta_{d-2}\!\sin\theta_{d-1}\big)(\sin{\theta_{1}})^{d-2}(\sin{\theta_{2}})^{d-3}\cdots (\sin{\theta_{d-2}})\,{\rm d}\theta_{1}{\rm d}\theta_{2}\cdots{\rm d}\theta_{d-1}.
\end{split}
\end{equation*}
We first integrate $\mathcal{I}(r; x)$ with respect to $\theta_{d-1}$. To do this, we recall the  integral formula involving the Bessel functions (cf.  \cite[P. 732]{Gradshteyn2015Book}): for real $\mu,\,\nu>-1$ and $a,b>0$, %it holds that
 \begin{align}\label{bessel_cossin}
\int_{0}^{\frac{\pi}{2}} J_{\nu}(a \sin \theta) J_{\mu}(b \cos \theta) \sin ^{\nu+1}\theta \cos ^{\mu+1}\theta\, {\rm d} \theta=\frac{a^{\nu} b^{\mu} J_{\nu+\mu+1}\left(\sqrt{a^{2}+b^{2}}\right)}{(a^{2}+b^{2})^{(\nu+\mu+1)/2}},
\end{align}
Then using the identity $\cos z=\sqrt{\pi z/2}J_{-1/2}(z)$ and \eqref{bessel_cossin} (with $a=r x_{d-1}\!\sin\theta_{1}\cdots\sin\theta_{d-2}$, $b=r x_{d}\sin\theta_{1}\cdots\sin\theta_{d-2}$ and $\mu=\nu=-1/2$), we derive
\begin{equation*}
\begin{split}
&\int_{0}^{\frac{\pi}{2}}\cos\!\big(r x_{d-1}\!\sin\theta_{1}\cdots\sin\theta_{d-2}\!\cos\theta_{d-1}\big)\cos\!\big(r x_{d}\sin\theta_{1}\cdots\sin\theta_{d-2}\!\sin\theta_{d-1}\big)\, {\rm d}\theta_{d-1}\\
&\qquad\qquad=\frac{\pi}{2}J_{0}\big(r \sin\theta_{1}\cdots\sin\theta_{d-2}\sqrt{x_{d-1}^2+x_{d}^2}\,\big).
\end{split}
\end{equation*}
Substituting the above into $\mathcal I(r,x)$, and applying  the same argument   to
$\theta_{d-2}, \theta_{d-3},\cdots, \theta_{1}$ iteratively $d-2$ times,  we  obtain
%Continuously using \eqref{bessel_cossin} $d-2$ times to integrate with , we have
\begin{equation}\label{Ir}
\mathcal{I}(r; x)=\Big(\frac{\pi}{2}\Big)^{\frac{d}{2}}(r|x|)^{1-\frac{d}{2}}J_{\frac{d}{2}-1}(r|x|).
\end{equation}

We proceed  with  the  integral identity  (cf.  \cite[P. 713]{Gradshteyn2015Book}): for real $\mu+\nu>-1$ and $p>0$,
\begin{equation}\label{bessel_exp}
\int_{\mathbb{R}^+}J_\mu(bt)e^{-p^2 t^2} t^{\nu-1}{\rm d}t = \frac{ b^{\mu} \Gamma((\mu + \nu)/2)}{2^{\mu+1}p^{\nu+\mu} \Gamma(\mu+1)}  {}_1F_1\Big(\frac{\mu+\nu}{2}; \mu+1; -\frac{b^2}{4p^2}\Big).
\end{equation}
Then, substituting  \eqref{Ir} into \eqref{comeIr00} and using \eqref{bessel_exp} (with $\mu=d/2-1$ and $\nu=2s+d/2+1$), we derive
\begin{align*}
(-\Delta)^{s}\big\{e^{-|x|^2}\big\}=\frac{|x|^{1-\frac{d}{2}}}{2^{d/2}}\int_{0}^{\infty}\!\!r^{2s+\frac{d}{2}}e^{-\frac{r^2}{4}}J_{\frac{d}{2}-1}(r|x|){\rm d}r=\frac{2^{2s}\Gamma(s+d/2)}{\Gamma(d/2)}{}_{1}F_{1}\Big(s+\frac{d}{2};\frac{d}{2};-|x|^2\Big).
\end{align*}
This yields \eqref{2Dcase}.  The asymptotic behaviour \eqref{asymexpo} follows from the property  (cf. \cite[P. 278]{bateman1953higher}):
\begin{equation}\label{asyex1}
{}_{1}F_{1}(a;b;z)=\frac{\Gamma(b)}{\Gamma(b-a)}(-z)^{-a}\big\{1+O(|z|^{-1})\big\}.
\end{equation}
Then \eqref{asymexpo} follows.  This completes the proof.
%\end{proof}

\renewcommand{\theequation}{B.\arabic{equation}}
\section{Proof of Proposition \ref{case1:alg}}\label{AppendixB}
The identity with $d=1$ can be found in \cite{tang2019rational}, so we assume that $d\geq 2$. Using the $d$-spherical  coordinate system in \eqref{d_sphere}, we obtain from \eqref{Ir} that
\begin{align*}
&{\mathscr F} \Big\{\frac{1}{(1+|x|^2)^{\gamma}}\Big\}(\xi)\!=\!\frac{1}{(2\pi)^{d/2}}\!\!\int_{\mathbb{R}^{d}}\!\frac{e^{-\textmd{i}x\cdot\xi}}{\left(1+|x|^2\right)^{\gamma}}{\rm d}x\!
=\!\!\frac{2^d}{(2\pi)^{d/2}}\!\!\int_{\mathbb{R}_{+}^{d}}\!\!\frac{\cos(x_{1}\xi_{1})\cos(x_{2}\xi_{2})\cdots\cos(x_{d}\xi_{d})}{\left(1+|x|^2\right)^{\gamma}}\,{\rm d}x\\
&\qquad=\Big(\frac{2}{\pi}\Big)^{\frac{d}{2}}\!\int_{0}^{\infty}\!\frac{r^{d-1}}{\left(1+r^2\right)^{\gamma}}\mathcal{I}(r; \xi)\,{\rm d}r=|\xi|^{1-\frac{d}{2}}\int_{0}^{\infty}\frac{r^{\frac{d}{2}}}{\left(1+r^2\right)^{\gamma}}J_{\frac{d}{2}-1}(r|\xi|){\rm d}r.
\end{align*}
 Recall the integral formula (cf. \cite[P. 686]{Gradshteyn2015Book}): for $-1<\nu<2\mu+\frac{3}{2}$ and $a,b>0$,
\begin{equation}\label{JtoK}
\int_{0}^{\infty}\frac{x^{\nu+1}}{(x^2+a^2)^{\mu+1}}J_{\nu}(bx){\rm d}x=\frac{a^{\nu-\mu}b^{\mu}}{2^{\mu}\Gamma(\mu+1)}K_{\nu-\mu}(ab),
\end{equation}
where $K_{\nu}(x)$ is the modified Bessel functions of the second kind. Note that $K_{-\nu}(x)=K_{\nu}(x)$. Then letting $\mu=\gamma-1$ and $\nu=d/2-1$ in \eqref{JtoK}, we obtain
\begin{align*}
&\quad{\mathscr F} \Big\{\frac{1}{(1+|x|^2)^{\gamma}}\Big\}(\xi)=\frac{|\xi|^{\gamma-\frac{d}{2}}}{2^{\gamma-1}\Gamma(\gamma)}K_{\gamma-\frac{d}{2}}(|\xi|).
\end{align*}
We also use  the integral formula (cf. \cite[P. 692]{Gradshteyn2015Book}):  for real $a>0$, real $b$, and $\nu-\lambda+1>|\mu|$,
\begin{equation}\label{JK}
\begin{split}
\int_{0}^{\infty}x^{-\lambda}K_{\mu}(ax)J_{\nu}(bx){\rm d}x&=\dfrac{b^{\nu}\Gamma\big((\nu-\lambda+\mu+1)/2\big)\Gamma\big((\nu-\lambda-\mu+1)/2)}{2^{\lambda+1}a^{\nu-\lambda+1}
\Gamma(\nu+1)}\times \\
&\qquad{}_{2}F_{1}\Big(\frac{\nu-\lambda+\mu+1}{2},\frac{\nu-\lambda-\mu+1}{2};\nu+1;-\frac{b^2}{a^2}\Big).
\end{split}
\end{equation}
Once again, using  the $d$-spherical  coordinate system  \eqref{d_sphere}, \eqref{Ir} and \eqref{JK} (with $\lambda=-2s-\gamma$, $\mu=\gamma-d/2$ and $\nu=d/2-1$), we have
\begin{align*}
&(-\Delta)^{s}\Big\{\frac{1}{(1+|x|^2)^{\gamma}}\Big\}=\frac{1}{(2\pi)^{\frac{d}{2}}2^{\gamma-1}\Gamma(\gamma)}\int_{\mathbb{R}^d}e^{\textmd{i}x\cdot\xi}|\xi|^{2s+\gamma-\frac{d}{2}}K_{\gamma-\frac{d}{2}}(|\xi|)\,{\rm d}\xi \\
=&\frac{2^d}{(2\pi)^{\frac{d}{2}}2^{\gamma-1}\Gamma(\gamma)}\int_{\mathbb{R}_{+}^d}\cos(x_{1}\xi_{1})\cos(x_{2}\xi_{2})\cdots\cos(x_{d}\xi_{d})|\xi|^{2s+\gamma-\frac{d}{2}}K_{\gamma-\frac{d}{2}}(|\xi|)\,{\rm d}\xi\\
=&\frac{2^{\frac{d}{2}-\gamma+1}}{\pi^{\frac{d}{2}}\Gamma(\gamma)}\int_{0}^{\infty}r^{2s+\gamma+\frac{d}{2}-1}K_{\gamma-\frac{d}{2}}(r)\mathcal{I}(r,x)\,{\rm d}r
=\frac{2^{-\gamma+1}}{\Gamma(\gamma)}|x|^{1-\frac{d}{2}}\int_{0}^{\infty}r^{2s+\gamma}K_{\gamma-\frac{d}{2}}(r)J_{\frac{d}{2}-1}(r|x|){\rm d}r\\
=&\frac{2^{2s}\Gamma(s+\gamma)\Gamma(s+\frac{d}{2})}{\Gamma(\gamma)\Gamma(\frac{d}{2})}{}_{2}F_{1}\Big(s+\gamma,s+\frac{d}{2};\frac{d}{2};-|x|^2\Big).
\end{align*}
This completes the derivation of \eqref{FLalge}.

 According to  \cite[P. 76]{bateman1953higher},  %{gmaao}
the asymptotic behaviour of the hypergeometric function for large $|x|$  (unless $a-b$ is an integer) is
\begin{equation}
_{2} F_{1}(a, b;c;x) = \lambda_{1}|x|^{-a}+\lambda_{2}|x|^{-b}+O(|x|^{-a-1})+O(|x|^{-b-1}).
\end{equation}
where $\lambda_{1}$ and $\lambda_2$ are constants;   if $a-b$ is an integer, $z^{-a}$ or $z^{-b}$ has to be multiplied by a factor $\ln(x)$. Then  we have the asymptotic behaviour of $(-\Delta)^{s}\big\{\frac{1}{(1+|x|^2)^{\gamma}}\big\}$ as $|x| \rightarrow \infty$ in \eqref{gmaao}-\eqref{neweqnAln}.
%\begin{itemize}
%\item[(i)] if $0<\gamma<d/2,$ then
%\begin{equation}
%(-\Delta)^{s}\Big\{\frac{1}{\left(1+|x|^2\right)^{\gamma}}\Big\}\sim \frac{1} {(1+|x|^2)^{s+\gamma}}.
%\end{equation}
%\item[(ii)] if $\gamma=d/2,$ then
%\begin{equation}\label{neweqnAlnc}
%(-\Delta)^{s}\Big\{\frac{1}{(1+|x|^2)^\gamma}\Big\}\sim \frac{\ln (1+|x|^2)} {(1+|x|^2)^{s+d/2}},
%\end{equation}
%\item[(iii)] if $\gamma>d/2,$ then
%\begin{equation}\label{neweqnBlnc}
%(-\Delta)^{s}\Big\{\frac{1}{(1+|x|^2)^{\gamma}}\Big\}\sim \frac{1} {(1+|x|^2)^{s+d/2}},
%\end{equation}
%\end{itemize}

\end{appendix}

\bibliographystyle{siam}

\bibliography{ref,rational}
\end{document}